\newtheorem{theorem}[subsection]{Theorem}
\newtheorem{lemma}[subsection]{Lemma}
\newtheorem{proposition}[subsection]{Proposition}
\newtheorem{corollary}[subsection]{Corollary}
\DeclareMathOperator{\Tr}{Tr}
\DeclareMathOperator{\eig}{eig}
\renewcommand{\Re}{\Realpart}
\DeclareMathOperator{\Realpart}{Re}
\DeclareMathOperator{\Time}{Time}
\DeclareMathOperator{\Space}{Space}
\renewcommand{\emptyset}{\varnothing}
\newcommand{\X}{\mathcal{X}}
\newcommand{\M}{\mathcal{M}}
\newcommand{\F}{\mathcal{F}}
\newcommand{\finespace}{\mspace{1.0mu}}
\newcommand{\Prob}[1][]{\mathbb{P}\ifthenelse{\equal{#1}{}}{}{\finespace[\finespace#1\finespace]}}
\newcommand{\Probs}[1][]{\mathbb{P}^*\ifthenelse{\equal{#1}{}}{}{\mspace{-1mu}[\,#1\,]}}
\newcommand{\1}{\mathbf{1}}
\newcommand{\Real}{\mathbb{R}}
\newcommand{\Natural}{\mathbb{N}}
\newcommand{\N}{\mathcal{N}}
\newcommand{\OU}{\mathcal{DBM}}
\newcommand{\ADBM}{\mathcal{A-DBM}}
\newcommand{\bead}{\mathrm{Bead}}
\newcommand{\W}{\mathcal{W}}
\newcommand{\inv}{^{-1}}
\newcommand{\comment}[1]{\ifthenelse{\boolean{dum}}{
{\par\noindent\Huge\ding{46}} \fbox{\parbox{10cm}{#1}}\par}{}}
\begin{document}
\comment{
$ $Id: article.tex,v 1.31 2011/05/16 13:29:49 enord Exp $ $
}
\author{Mark Adler}
\address{%
Department of Mathematics, Brandeis University,
Waltham, Mass 02454, USA.}
\email{adler@brandeis.edu}
\thanks{Adler: The support of a National Science Foundation grant 
\#~DMS-07-04271  is gratefully acknowledged.}

\author{Eric Nordenstam}
\address{%
Department of Mathematics, Universit\'e de Louvain, 1348
Louvain-la-Neuve,  Belgium}
\email{eric.nordenstam@uclouvain.be}
\thanks{%
Nordenstam: Supported by 
``Interuniversity Attraction Pole at UCL'' (Center of excellence): 
Nonlinear systems, stochastic processes and statistical mechanics (NOSY).}

\author{Pierre Van Moerbeke}
\address{%
Department of Mathematics, Universit\'e de Louvain, 1348
Louvain-la-Neuve, Belgium and Brandeis University,
Waltham, Mass 02454, USA.}
\email{pierre.vanmoerbeke@uclouvain.be \and
vanmoerbeke@brandeis.edu}
\thanks{%
van Moerbeke: 
The support of a National Science Foundation grant \#~DMS-07-04271 
is gratefully acknowledged. Also, a European Science Foundation 
grant (MISGAM), a Marie Curie Grant (ENIGMA), a FNRS grant and a 
``Interuniversity Attraction Pole'' grant are gratefully acknowledged.}

\title{The Dyson Brownian Minor Process}
\subjclass[2000]{Primary: 60B20, 60G55; Secondary: 60J65, 60J10.}
\keywords{Dyson's Brownian motion, bead kernel, 
extended kernels, Gaussian Unitary Ensemble.}
\nocite{JoNo07,AdNoMo10}
\begin{abstract}
Consider an $n\times n$ Hermitean matrix valued stochastic process
$\{H_t\}_{t\geq 0}$ where the 
elements evolve according to Ornstein-Uhlenbeck 
processes. It is well known that
the eigenvalues perform a so called 
Dyson Brownian motion, that is they 
behave as Ornstein-Uhlenbeck processes conditioned
never to intersect.

In this paper we study not only the eigenvalues of the
full matrix, but also the eigenvalues of all the principal minors.
That is, the eigenvalues of the $k\times k$ in the upper left corner
of $H_t$. 
If you project this process to a space-like path 
it is a determinantal process and we compute the kernel. 
This kernel contains the well known GUE minor kernel, 
\cite{JoNo06,OkRe06}  and the 
Dyson Brownian motion kernel~\cite{FoNa98} as special cases.

In the bulk scaling limit of this kernel it is possible
to recover a time-dependent  generalisation of Boutillier's
bead kernel~\cite{Bo09}.

We also compute the kernel for a process of intertwined Brownian motions 
introduced by Warren in~\cite{Wa}. 
That too is a determinantal process along spacelike paths. 

\end{abstract} 
\maketitle
\section{Introduction}

In a classic paper Dyson~\cite{Dy62} introduced a dynamics
on random Hermitean matrices where each free matrix element 
evolves independently of all others. 
They each form an Ornstein-Uhlenbeck process, that 
is a Brownian motion with a drift toward zero. 
He successfully analysed the associated dynamics of the eigenvalues.
Even the fact that the eigenvalues form a Markov process is 
highly non-trivial. 

This has become one of the most well studied models of random
matrix theory. 
It is beyond the scope of this paper to completely survey the 
literature but some notable results are these.
It has been found to be a limit of discrete random walks
conditioned never to intersect, for which correlation kernels have been found; see~\cite{FoNa98,Jo02,Jo03,Jo05a,Jo05b,EiKo08,KaTa02,KaTa03}. 
Other work includes~\cite{Be08,Sp86,TrWi04, AvM05}. In particular, in ~\cite{TrWi04, AvM05}, partial differential equations were derived for the Dyson process and related processes.

Actually there are two processes that have been called
Dyson Brownian motion. 
Let a Hermitean  matrix $B_t$ evolve according to
the transition density, for $s<t$, 
\begin{equation}
\label{eqn:dyson-bm1}
\Pr ( B_t \in dB | B_s = \bar B) 
= C e^{-Tr (B-q_{t-s} \bar B)^2 / (1-q_{t-s}^2)} 
\end{equation}
where $q_{t-s} = e^{-(t-s)} $ and $C$ is the normalisation constant
that makes this a probability density. 
Then this is a stationary process and its
stationary measure is called the \emph{Gaussian Unitary Ensemble} (GUE), 
see~\cite[chapter 9]{Me06}. 
For an~$n\times n$ GUE matrix~$B$, 
\begin{equation}
\Pr ( B \in dB) =
 C e^{ -\Tr B^2} \, dB.
\end{equation}
What this boils down to is that 
the elements on the diagonal are independent Gaussians with mean 0
and variance $\frac12$. 
For the off-diagonal elements the real and imaginary parts
are independent  Gaussians with mean 0 and variance $\frac14$.
For an~$n\times n$ Hermitean matrix $B$ let 
$\eig B = (\lambda_1 <  \cdots <  \lambda_n)$
denote the vector of eigenvalues of $B$. 
The distribution of  the eigenvalues is exactly
\begin{equation}
\Pr ( \eig B \in d\lambda) =
 C \prod_{1\leq i<j\leq n} (\lambda_i-\lambda_j) ^2 
 \prod_{i=1}^n e^{-\lambda_i ^2}\,d\lambda_i
\end{equation}
where $C$ is a normalisation constant, see~\cite[chapter 3]{Me06}. 
The transition density for the eigenvalues is 
then, by the Harish-Chandra formula,
\begin{equation}
\Pr ( \eig B_t \in d\lambda \, |\, \eig B_s = \bar \lambda) =
 C\frac{\Delta (\lambda) }{\Delta(\bar\lambda)}
 \det [ e^{-(\lambda_i - e^{-(t-s)}  \bar\lambda_j)^2 /(1-e^{-2(t-s)}) } ] _{i,j=1}^n\prod_{i=1}^n dx_i .
\end{equation}
Here, $\Delta$ denotes the Vandermonde determinant. 
See~\cite{Jo05a} for a readable overview. 
This expression is a  Doob $h$-transform 
of a Karlin-McGregor determinant, see~\cite{Do84,KaMcG59}.
That means that this can be interpreted probabilistically
as $n$ Ornstein-Uhlenbeck processes evolving in time 
conditioned never to intersect. 

Of course it is very natural to consider $n$ 
pure Brownian motions $(x_1(t)$, \dots, $x_n(t))_{t\in\Real} $  
conditioned never to intersect. 
The transition density for that process would be
\begin{equation}
\label{eqn:dyson-bm3}
\Pr ( x(t)  \in dx \, |\, x(s) = \bar x) =
 C\frac{\Delta (x) }{\Delta(\bar x )}
 \det [ e^{-(x_i - \bar x_j)^2 /(t-s) } ] _{i,j=1}^n\prod_{i=1}^n dx_i.
\end{equation}
This process has also been called Dyson's Brownian motion,
but is not stationary. 
It can be realised by a Hermitean matrix where
the elements evolve as Brownian motions. 

For the purpose of this article consider the following
model which we shall call the \emph{Dyson Brownian minor process} 
or (DBM process). 
Let  $(B_t)_{t\in\Real^+}$ be an $N\times N$  Hermitean
 matrix-valued stochastic  process started at 
$t=0$ with $B_0$ given by the GUE distribution.
Let the process evolve with transition density given by~\eqref{eqn:dyson-bm1}.
For $n=1$, \dots, $N$ let $B^{(n)}_t $ be the $n\times n$ 
submatrix in the upper left corner (principal minor) of $B_t$. 
We are interested in all the 
$\binom{N+1}{2}$ eigenvalues of $B^{(n)}_t$ for $n=1$, \dots, $N$. 

If $\lambda$ is an eigenvalue of $B_t^{(n)}$ then we shall 
say that there is a particle at $(n, t, \lambda)$. 
In this way we can think of the DBM process as a point process,
that is a measure on configurations of points or particles 
on the space $\{1,\dots, N\} \times \Real \times \Real$.
It turns out that this is, in the terminology of~\cite{BoFe08},
 a determinantal point process along space-like paths. 
More precisely, that means the following.

For notation we shall write that 
$$
(n,t) < (n',t') := \begin{cases}
\mathtt{true} & \text{if $n>n'$}\\
\mathtt{true} & \text{if $n=n'$ and $t < t'$ }\\
\mathtt{false} & \text{otherwise.}
\end{cases}
$$
and 
$$ (n,t) \geq (n',t')  := \neg ( (n,t) < (n',t') ).
$$ 
\begin{theorem}
\label{thm:main-theorem-1}
Take a sequence $\{(n_i, x_i, t_i)\}_{i=1}^k$ of levels, positions
and times. 
Let them follow a space like path, which means that 
\begin{equation}
\label{eqn:spacelike1}
0\leq t_1\leq t_2 \leq \cdots \leq t_k, 
\end{equation}
\begin{equation}
\label{eqn:spacelike2}
n_1\geq n_2 \geq \cdots \geq n_k.
\end{equation}
Then the density of the event
that there is a particle  at time $t_i$ on level 
$n_i$ at position $x_i$ in the Dyson Brownian minor process is
\begin{equation}
\rho((n_1, x_1, t_1), \dots, (n_k, x_k, t_k))
=
\det [ K^\OU( (n_i, x_i, t_i), (n_j, x_j, t_j))]_{i,j=1}^k
\end{equation}
where
\begin{multline}
K^\OU ((n, x, t), (n', x', t')) = \\
\begin{cases}
\displaystyle
\sum_{l=-\infty}^{-1}
\sqrt{
 \frac{(n' +l )!}{(n + l)!}
 }
e^{-l(t'-t) }
h_{n+l}^{*}(x)
h_{n'+l}^{*}(x') 
e^{-(x')^2}, & \text{for $(n,t) \geq (n',t') $,}\\
\displaystyle
-\sum_{l=0}^{\infty}
\sqrt{
 \frac{(n' + l )!}{(n + l)!}
 }
e^{-l(t'-t) }
h_{n+l}^{*}(x)
h_{n'+l}^{*}(x') 
e^{-(x')^2} , & \text{for $(n,t) <(n',t')$.}
\end{cases}
\end{multline}
Furthermore 
\begin{multline}
\label{eqn:DB-kernel}
K^\OU ((n, x, t), (n', x', t')) = 
- \phi^\OU ((n, x, t), (n', x', t'))
\\
+2^{(n'-n)/2}  \frac{2}{(2\pi i)^2} 
\int_\gamma du \int_\Gamma dv  
\frac{v^{n'}}{u^n}
\frac{ e^{-u^2 + 2ux + v^2 - 2vx'} }{e^{-(t'-t)} v - u} 
\end{multline}
where
\begin{multline}
\label{eqn:DB-kernel2}
\phi^\OU ((n, x, t), (n', x', t')) = \\ \begin{cases}
\displaystyle
2^{(n-n')/2}  e^{n' (t'-t) } \int_\Real H^{n-n'} (x-y) p^{*} _{t'-t}( y,x') \,dy, & 
\text{if $(n,t) < (n',t')$,}\\
0, & \text{otherwise.}
\end{cases}
\end{multline}
The contours of integration are such that $\gamma $
encloses the pole at the origin and $\Gamma $ goes from $-i\infty$
to $i\infty $ in such a way that $|u|<|v|$ always,
 see Figure~\ref{fig:contours}.
\end{theorem}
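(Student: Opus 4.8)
The plan is to prove that, along any space-like path, the point configuration of eigenvalues of the minors is governed by a measure of the ``product of determinants'' type to which the Eynard-Mehta theorem applies (in the space-like, ``$2+1$''-dimensional formulation used by Borodin and Ferrari \cite{BoFe08}), and then to read off the resulting kernel and massage it into the two stated forms. Between two consecutive points $(n_i,x_i,t_i)$ and $(n_{i+1},x_{i+1},t_{i+1})$ of the path --- for which $n_{i+1}\le n_i$ by \eqref{eqn:spacelike2} and $t_{i+1}\ge t_i$ by \eqref{eqn:spacelike1} --- I would interpolate by first descending in level from $n_i$ down to $n_{i+1}$ at the frozen time $t_i$, and then advancing in time from $t_i$ to $t_{i+1}$ at the frozen level $n_{i+1}$, integrating out the eigenvalues at all the interpolating levels and times.

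\textbf{Step 1: the joint law along the path is of Eynard-Mehta form.} Three inputs are combined. First, since the entries of $B_t$ are independent Ornstein-Uhlenbeck processes subject only to the Hermitean constraint, the submatrix process $(B^{(n)}_t)_{t\ge0}$ is itself a Hermitean Ornstein-Uhlenbeck matrix process, and hence the eigenvalues of a single minor at several times evolve as Dyson's Ornstein-Uhlenbeck process; by the Harish-Chandra formula this is the Doob $h$-transform of a Karlin-McGregor determinant \cite{Do84,KaMcG59}, i.e.\ a product of determinants of the one-dimensional OU transition kernel $p^{*}$. Second, at a single time the eigenvalues of $B^{(1)}_t,\dots,B^{(n)}_t$ form a Gelfand-Tsetlin pattern whose law, by Baryshnikov's description of the eigenvalues of the principal minors of a $\GUE$ matrix (the mechanism behind the $\GUE$ minor kernel of \cite{JoNo06,OkRe06}), is a product of interlacing-indicator determinants weighted by the $\GUE$ eigenvalue density on the top level. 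Third --- and this is the essential point --- the two are compatible under the dynamics: the conditional law of the lower minors given the upper minor is intertwined with the OU semigroup, exactly as in Warren's analysis of interlacing Brownian motions \cite{Wa} but in the Ornstein-Uhlenbeck and multilevel setting (one may also argue by relating OU to Brownian motion, or by a direct computation on the matrix model). Granting this, the finite-dimensional distributions along the space-like path patch together into a single measure which is a product of the top-level $\GUE$ density, interlacing kernels at the level-decreasing steps, and OU transition kernels at the time-increasing steps. I expect establishing this compatibility (the OU analogue of Warren's intertwining, together with the consistency needed for the space-like patching) to be the main obstacle; after it, everything is computation.

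\textbf{Step 2: the Hermite-function form of the kernel.} Feeding the measure of Step 1 into the Eynard-Mehta theorem gives a determinantal process whose kernel is built from the transition kernels above and the inverse of a Gram-type matrix. I would diagonalize the OU semigroup by Hermite functions, so that $p^{*}_s$ decomposes over the $h^{*}_k$ with eigenvalue $e^{-ks}$, and use that moving between consecutive minors multiplies the relevant family of functions by an explicit constant --- the ratio of leading coefficients of the Hermite polynomials --- which is exactly what produces the factor $\sqrt{(n'+l)!/(n+l)!}$. The orthogonality $\int h^{*}_j(x)h^{*}_k(x)e^{-x^2}\,dx=\delta_{jk}$ then makes the Gram matrix triangular, hence trivially invertible, and the intermediate summations in the Eynard-Mehta formula collapse. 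Collecting the surviving terms yields the stated one-sided series for $K^{\OU}$, with $l$ running over $l\le-1$ (a finite sum) when $(n,t)\ge(n',t')$ and over $l\ge0$ (an infinite sum, convergent because $t'\ge t$) in the complementary case; the location of the split records which half of the doubly-infinite sum over $l\in\mathbb{Z}$ is removed by the Gram-matrix inversion, and a little care with convergence is needed at $t'=t$.

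\textbf{Step 3: the contour-integral form \eqref{eqn:DB-kernel}.} To pass from the series to the double integral I would use the Hermite generating function $e^{2ux-u^2}=\sum_{k\ge0}c_k\,h^{*}_k(x)\,u^k$ (for suitable constants $c_k$) to write $h^{*}_{n+l}(x)$ as a contour integral in a variable $u$ around the origin (the contour $\gamma$), together with the Fourier-type representation of $h^{*}_{n'+l}(x')e^{-(x')^2}$ as an integral of $e^{v^2-2vx'}$ along the imaginary axis (the contour $\Gamma$), which is where the Gaussian weight $e^{-(x')^2}$ comes from. The sum over $l$ then becomes a geometric series with ratio essentially $e^{-(t'-t)}v/u$, which sums to $1/(e^{-(t'-t)}v-u)$ precisely in the regime $|u|<|v|$ of Figure~\ref{fig:contours}, while the factors $2^{(n'-n)/2}$ and $2$ in \eqref{eqn:DB-kernel} are bookkeeping from the generating-function normalizations. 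Because that geometric series converges only on part of the natural contour configuration, replacing the one-sided sum of Step 2 by the full double integral forces adding or subtracting a finite collection of terms; these assemble exactly into $-\phi^{\OU}$, and evaluating the correction in closed form via the same representations and the OU semigroup yields \eqref{eqn:DB-kernel2}, with the convolution power $H^{n-n'}$ recording the $n-n'$ level-decreasing steps and $p^{*}_{t'-t}$ the time step. As consistency checks I would confirm the two announced special cases --- the $\GUE$ minor kernel of \cite{JoNo06,OkRe06} at $t=t'$ and the Dyson Brownian motion kernel of \cite{FoNa98} at $n=n'$ --- which also pin down the overall normalization and the orientation of the contours.
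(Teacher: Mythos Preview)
Your proposal is correct and follows essentially the same route as the paper: establish the product-of-determinants structure along a space-like path (the paper's Section~\ref{sec:markov}, which for the Dyson case defers to \cite{FeFr10}), feed it into the Eynard-Mehta machinery, exploit the Hermite eigenfunction expansion of $p^{*}$ together with the shift relation~\eqref{eqn:space-step} to collapse the Gram matrix, and then obtain the double integral via the generating-function representations of Section~\ref{sec:hermite-representations} and a geometric series in $u/(e^{-(t'-t)}v)$. The only noteworthy difference is implementational: where you speak of the Gram matrix being ``triangular, hence trivially invertible'', the paper realizes this concretely via the virtual-particle $L$-ensemble of \cite{BoFePrSa} with a limit $u\to-\infty$ and column operations $R_k$ (Lemmas~\ref{thm:psi-bar}--\ref{thm:psi}) that force $MR_k\to I$; this is the same mechanism but packaged differently, and it is where most of the actual labour lies.
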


Here $h^*_n$, for $n=0$, 1, \dots, is the normalised Hermite polynomial of order $n$,
see Section~\ref{sec:dyson}. 
$h^*_n$ for $n=-1$, $-2$, \dots, are defined to be  zero. 
$p^*$ is the transition density of  an Ornstein-Uhlenbeck 
process, see~\eqref{eqn:pstar-def}. $H^n$ is the $n$th anti-derivative
of the Dirac delta function, see~\eqref{eqn:heaviside}. 
This Theorem will be proved in Section~\ref{sec:finale}.

The term space-like path for a path in space-time 
satisfying~\eqref{eqn:spacelike1} and~\eqref{eqn:spacelike2}
was coined in~\cite{BoFe08} and, while the 
reason for using that name is not made clear, the terminology has
become standard. 

It is quite clear that this process, along a space-like path, is 
a Markov process. It is known from Dyson~\cite{Dy62} that
this process is Markovian on a fix level, that is for constant~$n$. 
Baryshnikov~\cite{Ba} observed that it is a Markov process
for fixed time going down one level (say from $n$ to $n-1$). 
A space-like path is a combination of steps in  time 
and steps going down one level, i.e. it is a combination of consecutive
 Markov steps.

One beautiful construction which is due to Warren~\cite{Wa}
is the following. 
Start a 1-dimensional Brownian motion, say $(x(t))_{t\geq 0} $ at the origin  
at time $t=0$. 
Then start two new processes, say $(y_1 (t))_{t\geq 0}$ and 
$ (y_2(t))_{t\geq 0}$ respectively,
one above and one below, respectively,  the old one. 
They evolve as Brownian motions except that they are pushed
up and down, respectively, by $x$. 
For details see~\cite{Wa}. 
It is then a Theorem that $y_1$ and $y_2$ together form a 
Dyson Brownian motion in the sense that their 
transition density is of the same form as~\eqref{eqn:dyson-bm3}.
The process can be continued: one can start three processes
above, between and below $y_1$ and $y_2$. These three
will then be a Dyson Brownian motion of three particles and so on. 
This process occurs as a scaling limit in the study of
a certain random tiling model~\cite{No10}. 
To reduce the amount of numerical factors floating around we 
shall in this paper consider the Warren process to be driven by 
Brownian motions with variance $t/2$ instead of standard Brownian motion. 
This is just a rescaling that is not important. 

For this model we can show a result analogous to that of 
Theorem~\ref{thm:main-theorem-1}.

\begin{theorem}
\label{thm:main-theorem-2}
Take a sequence $\{(n_i, t_i, x_i)\}_{i=1}^k$ of times 
as in the previous Proposition. 
Then the density of the event
that there is a particle  at time $t_i$ on level 
$n_i$ at position $x_i$ in Warren's process is
\begin{equation}
\rho((n_1, t_1, x_1), \dots, (n_k, t_k, x_k))
=
\det [ K^\W( (n_i, t_i, x_i), (n_j, t_j, x_j))]_{i,j=1}^k
\end{equation}
where
\begin{multline}
K^\W ((n, x, t), (n', x', t')) = \\
\begin{cases}
\displaystyle
\frac{1}{\sqrt{t}}
\sum_{l=-\infty}^{-1}
\sqrt{
 \frac{(n' +l )!}{(n + l)!}
    \left(
      \frac{t}{t'}
    \right)^l
}
h_{n+l}^{*}(x/\sqrt{t})
h_{n'+l}^{*}(x'/\sqrt{t'}) 
e^{-(x')^2/t'}, & \text{for $(n,t) \geq (n',t') $,}\\
\displaystyle
-\frac{1}{\sqrt{t}}
\sum_{l=0}^{\infty}
\sqrt{
 \frac{(n' +l )!}{(n + l)!}
    \left(
      \frac{t}{t'}
    \right)^l
}
h_{n+l}^{*}(x/\sqrt{t})
h_{n'+l}^{*}(x'/\sqrt{t'} ) 
e^{-(x')^2/t'}, & \text{for $(n,t) <(n',t')$.}
\end{cases}
\end{multline}

Furthermore 
\begin{multline}
2^{\frac12 (n-n')}
K^\W ((n, x, t), (n', x', t')) = 
- \phi^\W ((n, x, t), (n', x', t'))
\\
+  \frac{2}{(2\pi i)^2} 
\frac{t^{n/2} (t')^{n'/2} }{\sqrt{t}} 
\int_\gamma du \int_\Gamma dv  
\frac{v^{n'}}{u^n}
\frac{ e^{-u^2 + 2ux/\sqrt{t} + v^2 - 2vx'/\sqrt{t'} } }{t v - u} 
\end{multline}
where
\begin{equation}
\phi^\W ((n, x, t), (n', x', t')) = \begin{cases}
\displaystyle
\sqrt{\frac{(t')^{n'}}{t^n}} \int_\Real H^{n-n'} (x-y) p_{t'-t}( y,x') \,dy, & 
\text{if $(n,t) < (n',t')$,}\\
0, & \text{otherwise.}
\end{cases}
\end{equation}
The contours of integration are such that $\gamma $
encloses the pole at the origin and $\Gamma $ goes from $-i\infty$
to $i\infty $ in such a way that $|u|<|v|$ always,
 see Figure~\ref{fig:contours}.
\end{theorem}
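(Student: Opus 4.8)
The plan is to deduce Theorem~\ref{thm:main-theorem-2} from Theorem~\ref{thm:main-theorem-1} by the classical deterministic time change relating a stationary Ornstein--Uhlenbeck process to a Brownian motion. Let $(B_t)_{t\in\Real}$ denote the stationary Hermitean matrix process with transition density~\eqref{eqn:dyson-bm1}; this is well defined for all $t\in\Real$ since the Gaussian Unitary Ensemble is its stationary law, and by stationarity --- together with the fact that the kernel $K^\OU$ of Theorem~\ref{thm:main-theorem-1} depends on the times only through their differences --- the process of all minor eigenvalues of $(B_t)$ restricted to any space-like path with times bounded below has correlation kernel $K^\OU$. Entrywise one has the representation $B_t=\widetilde B_{\tau}/\sqrt{\tau}$ with $\tau=\tfrac12 e^{2t}$, where $(\widetilde B_{\tau})_{\tau\ge 0}$ is the Hermitean matrix whose entries are independent Brownian motions of variance $\tau/2$ started from $\widetilde B_0=0$ (as $t\to -\infty$ the law of $B_t$ stays GUE while $\sqrt{\tau}\to 0$, so $\widetilde B_{\tau}=\sqrt{\tau}\,B_t\to 0$, which is the correct initial condition). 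Along a space-like path Warren's process~\cite{Wa} is, by construction, a concatenation of Dyson--Brownian-in-time steps and of Baryshnikov's one-level-down Markov steps; the level-down step is scale covariant (scaling the large matrix scales all its minor eigenvalues by the same factor) and the in-time step is intertwined with the Ornstein--Uhlenbeck in-time step by $\tau=\tfrac12 e^{2t}$. Hence the process of all minor eigenvalues of $(\widetilde B_{\tau})_{\tau\ge 0}$ is the image of that of $(B_t)_{t\in\Real}$ under the map sending a particle at level $n$, time $t$, position $\lb$ to one at level $n$, time $\tfrac12 e^{2t}$, position $\sqrt{\tfrac12 e^{2t}}\,\lb$.

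Carrying this out, I would first note that $t\mapsto\tfrac12 e^{2t}$ is increasing, so the space-like conditions~\eqref{eqn:spacelike1}--\eqref{eqn:spacelike2} and the partial order $(n,t)\ge (n',t')$ for Warren's process pull back to the corresponding ones for the Dyson Brownian minor process, and Theorem~\ref{thm:main-theorem-1} applies to the pulled-back path. Writing $\rho^\W$ as the push-forward of $\rho^\OU$ under the above map, the only nontrivial Jacobian is the spatial one, $\prod_i 1/\sqrt{\tau_i}$, coming from $\lb_i=x_i/\sqrt{\tau_i}$; absorbing the factor $1/\sqrt{\tau_i}$ into the $i$-th row of the determinant yields $\rho^\W=\det\bigl[K^\W(\cdot,\cdot)\bigr]$ with
\[
K^\W\bigl((n,x,\tau),(n',x',\tau')\bigr)=\frac{1}{\sqrt{\tau}}\,K^\OU\bigl((n,\,x/\sqrt{\tau},\,\tfrac12\log(2\tau)),(n',\,x'/\sqrt{\tau'},\,\tfrac12\log(2\tau'))\bigr).
\]
Substituting this into the series form of $K^\OU$, the factor $e^{-l(t'-t)}$ becomes $(\tau/\tau')^{l/2}=\sqrt{(\tau/\tau')^{l}}$, the Hermite functions become $h^{*}_{n+l}(x/\sqrt{\tau})$ and $h^{*}_{n'+l}(x'/\sqrt{\tau'})$, and $e^{-(x')^2}$ becomes $e^{-(x')^2/\tau'}$; after renaming $\tau$ as $t$ this is exactly the series in Theorem~\ref{thm:main-theorem-2}, with no further gauge conjugation needed.

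For the two closed forms one can substitute the same change of variables directly into the contour representation~\eqref{eqn:DB-kernel}; more robustly, I would re-run in the Brownian setting the Hermite generating-function computation that is used in Section~\ref{sec:finale} to pass from the series for $K^\OU$ to~\eqref{eqn:DB-kernel}. Applied to the series for $K^\W$, that computation converts the two sums over $l$ into a geometric series whose ratio involves the contour variables together with $(\tau/\tau')^{1/2}$, produces $v^{n'}/u^n$ from the index shifts, and leaves a double contour integral of precisely the type in Theorem~\ref{thm:main-theorem-2}; the boundary term not captured by the geometric series is $\phi^\W$, which is the analogue of $\phi^\OU$ with the Ornstein--Uhlenbeck transition density $p^{*}$ replaced by the Brownian transition density $p$ and the factor $e^{n'(t'-t)}$ rearranged into the new time and length scales. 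Matching the two sides then calls for a rescaling of the contour variables $u,v$ and a recombination of the Gaussian exponents so that the denominator and the prefactor take the stated form $tv-u$ and $t^{n/2}(t')^{n'/2}/\sqrt{t}$.

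I expect the conceptual content to be light and the work to be almost entirely bookkeeping. Three points deserve care: justifying that the stationary extension of~\eqref{eqn:dyson-bm1} together with the $\tau\to 0$ limit really reproduces Warren's initial condition, and that Warren's process --- presented through transition densities of the form~\eqref{eqn:dyson-bm3} --- is indeed the minor-eigenvalue process of $\widetilde B$; keeping track of all the constants in the contour representation, namely the powers $2^{(n-n')/2}$, the factor $t^{n/2}(t')^{n'/2}/\sqrt{t}$, and any gauge factor $g(n,x,t)/g(n',x',t')$ absorbed in the generating-function step, together with the exact normalisations of $p^{*}$, $p$ and the Hermite functions, so that the statement comes out on the nose; and, if a fully self-contained argument is preferred, redoing the Eynard--Mehta computation behind Theorem~\ref{thm:main-theorem-1} with the Brownian in-time transition~\eqref{eqn:dyson-bm3} and Warren's level-lowering intertwining kernels in place of their Ornstein--Uhlenbeck counterparts, and with the confluent initial condition (all particles at the origin) obtained as a limit of distinct starting points --- its technical heart, the identification of the inverse of the relevant Gram matrix via the orthogonality of the Hermite functions, being the same as the one already carried out for Theorem~\ref{thm:main-theorem-1}.
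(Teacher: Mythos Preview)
Your approach is correct but genuinely different from the paper's. The paper does not deduce Theorem~\ref{thm:main-theorem-2} from Theorem~\ref{thm:main-theorem-1}; instead it sets up, in Section~\ref{sec:definitions}, a pair of parallel notations (starred for Ornstein--Uhlenbeck, unstarred for Brownian motion) satisfying the same list of convolution identities, and then runs a single Eynard--Mehta computation in Section~\ref{sec:kernel} that specialises to either theorem by plugging in the appropriate set of functions and a conjugating factor. Your route---the deterministic time change $\tau=\tfrac12 e^{2t}$, $x\mapsto x/\sqrt{\tau}$---is exactly the ``change of variables'' the paper alludes to in the paragraph following Theorem~\ref{thm:main-theorem-2}, and it yields the series form of $K^\W$ from that of $K^\OU$ with essentially no work beyond the Jacobian $1/\sqrt{\tau}$. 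What you gain is economy: no second Eynard--Mehta pass. What the paper gains is self-containment and a demonstration that both kernels flow from the same algebraic structure.

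One caveat on your write-up: the statement that ``along a space-like path Warren's process is, by construction, a concatenation of Dyson--Brownian-in-time steps and of Baryshnikov's one-level-down Markov steps'' is not by construction---Warren's process is built from reflected Brownian motions, and the fact that its space-like marginals factor in this way is a theorem (this is precisely what Section~\ref{sec:markov} extracts from Propositions~\ref{prop:wa1} and~\ref{prop:wa2}). Relatedly, Warren's process is \emph{not} the minor-eigenvalue process of the Brownian matrix $\widetilde B$ as a full process (the paper stresses this: the local interaction between adjacent levels is different), only its space-like marginals coincide with it. Your reduction still goes through once this is stated carefully, but you should not conflate the two processes; you do flag this in your ``points of care'', though the phrasing there could be sharpened.
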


Again $h^*_n$ is the normalised Hermite polynomial of order $n$,
see Section~\ref{sec:warren}
and $H^n$ is the $n$th anti-derivative
of the Dirac delta function, see~\eqref{eqn:heaviside}. 
$p$ is the transition density of  a Brownian motion, 
see~\eqref{eqn:p-def}. 
This Theorem will be proved in Section~\ref{sec:finale}.

Note that although the kernels $K^{\W}$ and
$K^{\OU}$ are just a change of variables from each other,
the underlying processes are  different in an essential way. 
As noted in~\cite{AdNoMo10}, the difference of the eigenvalues of successive
levels are pushed apart by a constant drift when they are close. 
In the construction due to Warren in~\cite{Wa} the difference of the particles
on successive levels behave like the absolute value of a Brownian
motion when they are close. 
It is to be remarked that if the minor process was constructed with 
Brownian motions replacing the Ornstein-Uhlenbeck prcesses,
the marginals along space-like paths would agree with the Warren process.

No article on random matrices is complete without a  scaling limit,
so let us do one of those. 
In~\cite{Bo09}, Boutillier introduced a one-parameter family of
 models which are point processes on $\Natural\times \Real$. 
On each individual copy of $\Real$, it specialises
to a determinantal process with the sine kernel which 
is so prevalent in all branches of 
 random matrix theory, see~\cite{Me06}. 
Furthermore, on successive lines the particles interlace. 
By that I mean that if there are particles at $(n, x_1)$
and  $(n, x_2)$, then there exists a particle $(n+1, y)$ 
such that $x_1<y<x_2$ almost surely. 
As a scaling limit of $ K^\OU$ above we recover a kernel
which specialises to the Boutillier Bead kernel at a fixed time. 
One way to interpret this is to imagine all the particles
in Boutillier's model moving in time in such a way that 
at each fixed time the picture looks like the original Bead kernel model. 

\begin{theorem}
\label{thm:main-theorem3}
Let $a$ be a real number on the interval $(-1,1)$.
In the bulk scaling limit around $a\sqrt{2N}$
the Dyson Brownian minor kernel
converges  to a time dependent Bead kernel with parameter $a$.
More precisely,
\begin{multline}
K^\bead_a ( (n, x, t), (n', x', t')) = 
\lim_{N\rightarrow\infty}
e^{-N(t'-t)} (4N)^{\frac12 (n-n')}
(2N)^{-\frac12} \times \\ 
\times K^\OU ((N+n,  \sqrt{2N}a + \frac{x}{\sqrt{2N}}, \frac{t}{2N}), 
(N+n',  \sqrt{2N}a + \frac{x'}{\sqrt{2N}}, \frac{t'}{2N}))
\end{multline}
for 
\begin{multline}
\label{eqn:bead}
K^\bead _a ( (n, x, t), (n', x', t')): =
\\
-\phi^\bead _a ( (n, x, t), (n', x', t'))
+ \frac{1 }{2\pi i} \int_{u_-}^{u_+} 
u^{n'-n} e^{\frac12 (t'-t)(u^2 -2a u )  + u(x-y) } \, du
\end{multline}
where
\begin{multline}
\label{eqn:bead2}
\phi^\bead _a ( (n, x, t), (n', x', t')) = \\ \begin{cases} 
2^{\frac 12(n-n')}  \int_\Real H ^{n-n'} (x-y) p_{\frac12(t'-t)} (y,x'-a (t'-t) ) \, dy &
\text{if $(n,t) < (n',t')$}\\
0 & \text{otherwise.}
\end{cases}
\end{multline}
The limit holds uniformly on compact sets
and the contour of integration in~\eqref{eqn:bead}
is the straight line between $u_-$ and $u_+$
where
\begin{equation}
\label{eqn:saddle-points}
u_\pm = a \pm i\sqrt{1-a^2}
\end{equation}
are two points on the unit circle. 
\end{theorem}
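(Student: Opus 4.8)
\medskip
\noindent\emph{Proof strategy.} The plan is to start from the double contour integral representation \eqref{eqn:DB-kernel}--\eqref{eqn:DB-kernel2} of $K^{\OU}$, substitute the bulk scaling, and carry out a steepest descent analysis of the contour integral, while treating the $\phi^{\OU}$ term by a direct local--limit computation. Abbreviate the scaled data as $\hat n=N+n$, $\hat n'=N+n'$, $\hat x=\sqrt{2N}a+x/\sqrt{2N}$, $\hat x'=\sqrt{2N}a+x'/\sqrt{2N}$, $\hat t=t/(2N)$, $\hat t'=t'/(2N)$, and write $q=e^{-(\hat t'-\hat t)}=e^{-(t'-t)/(2N)}$ for the exponential appearing in the pole, so $q\to1$. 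Writing $K^{\OU}=-\phi^{\OU}+(\text{contour integral})$, after multiplication by $e^{-N(t'-t)}(4N)^{(n-n')/2}(2N)^{-1/2}$ the first summand must converge to $-\phi^{\bead}_a$ and the second to the single contour integral in \eqref{eqn:bead}.

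First I would dispose of $\phi^{\OU}$. By \eqref{eqn:DB-kernel2} it is nonzero exactly when $(n,t)<(n',t')$, i.e.\ when $n>n'$, or $n=n'$ and $t<t'$; since $\hat n-\hat n'=n-n'$ this matches the condition in \eqref{eqn:bead2}. In $\int_{\Real}H^{\hat n-\hat n'}(\hat x-y)\,p^{*}_{\hat t'-\hat t}(y,\hat x')\,dy$ substitute $y=\sqrt{2N}a+\eta/\sqrt{2N}$ and use the homogeneity $H^{m}(\xi/\sqrt{2N})=(2N)^{(1-m)/2}H^{m}(\xi)$ of the iterated primitives of $\delta$. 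For the Ornstein--Uhlenbeck kernel one uses the Mehler form together with $\hat t'-\hat t=(t'-t)/(2N)\to0$: the mean reversion, evaluated at the macroscopic point $\sqrt{2N}a$, produces in the microscopic variable an effective drift $-a(t'-t)$ and a Gaussian limiting density, so that a suitable power of $N$ times $p^{*}_{\hat t'-\hat t}(y,\hat x')$ converges, uniformly on compacts, to $p_{\frac12(t'-t)}(\eta,x'-a(t'-t))$. Collecting the powers of $N$ (from $(4N)^{(n-n')/2}$, $(2N)^{-1/2}$, $H^{m}(\cdot/\sqrt{2N})$, $dy=d\eta/\sqrt{2N}$, and the Gaussian normalisation of $p^{*}$) they cancel, while $e^{-N(t'-t)}e^{\hat n'(\hat t'-\hat t)}\to1$; what remains is exactly $\phi^{\bead}_a$.

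The heart of the proof is the contour integral. After rescaling $u=\sqrt{N/2}\,w$ and $v=\sqrt{N/2}\,z$, the exponent becomes $N\,(f(w)-f(z))$ plus a bounded remainder, with $f(\zeta)=-\tfrac12\zeta^{2}+2a\zeta-\log\zeta$; its critical equation $\zeta^{2}-2a\zeta+1=0$ has precisely the conjugate roots $u_\pm=a\pm i\sqrt{1-a^{2}}$ of \eqref{eqn:saddle-points}, and the hypothesis $|a|<1$ is exactly what makes these two distinct simple saddles (they coalesce only at $|a|=1$, the spectral edge). I would then deform $\gamma$ onto a steepest descent contour for $\Re f$ through $u_+$ and $u_-$, and $\Gamma$ onto a steepest descent contour for $-\Re f$ through the same two points, arranged so that $\Re f(w)\le\Re f(u_\pm)$ for $w$ on the former, with equality only at the saddles, and $\Re f(z)\ge\Re f(u_\pm)$ for $z$ on the latter. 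During this deformation $\gamma$ must sweep across $\Gamma$, and the pole at $u=qv$ is crossed along the arc joining $u_-$ to $u_+$ (which may be deformed to the straight segment); its residue is $-q^{-\hat n}\,v^{\,\hat n'-\hat n}\exp\!\big(v^{2}(1-q^{2})+2v(q\hat x-\hat x')\big)$, which, using $1-q^{2}=(t'-t)/N+o(1/N)$ and $q\hat x-\hat x'=(2N)^{-1/2}(x-x'-a(t'-t))+o(N^{-1/2})$, collapses in the limit, after the change of variables and reinstating all prefactors, to the integrand $u^{\,n'-n}\exp\!\big(\tfrac12(t'-t)(u^{2}-2au)+u(x-x')\big)$ of \eqref{eqn:bead}, integrated along the segment from $u_-$ to $u_+$. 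Finally, the residual double integral over the two steepest descent contours is negligible: on them $\Re(f(w)-f(z))\le0$, with equality only at the coalescence point $w=z=u_\pm$ where the pole has been removed, so a Laplace-type estimate shows this term tends to zero after normalisation. All the estimates being uniform for $(x,x',t,t')$ in a compact set and $n,n'$ bounded, the convergence is uniform on compact sets.

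The main obstacle is the global geometry of the contour manipulation: one must exhibit admissible steepest descent contours through the conjugate pair of saddles, verify that $\gamma$ can be slid onto its target while meeting $\Gamma$ exactly along the arc between $u_-$ and $u_+$ (so that the pole is picked up once, with the correct orientation and sign), and control the residual double integral in the delicate region where the two saddles of $\pm f$ collide and the removed pole sits. A subsidiary but necessary task is the bookkeeping showing that the powers of $N$ and of $2$ coming from the prefactor $e^{-N(t'-t)}(4N)^{(n-n')/2}(2N)^{-1/2}$, the changes of variables, and the residue combine --- against the normalisations fixed for $p^{*}$, $p$ and the primitives $H^{m}$ --- to reproduce precisely \eqref{eqn:bead}--\eqref{eqn:bead2}. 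One could alternatively derive the same limit from the Hermite-series form of $K^{\OU}$ via Plancherel--Rotach asymptotics, with the sum over the index $l$ becoming the contour integral, but the contour-integral route above is the cleaner one.
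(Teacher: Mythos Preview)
Your proposal is correct and follows essentially the same route as the paper: insert the bulk scaling into the double contour representation \eqref{eqn:DB-kernel}, identify the common saddle points $u_\pm$ of $f(u)=-u^2+4au-2\ln u$ (your $f$ is half of this), deform contours so that the pole is crossed along the segment from $u_-$ to $u_+$, collect the residue there to obtain the single integral in \eqref{eqn:bead}, and treat $\phi^{\OU}$ by the direct local-limit computation you sketch. The only difference is in how the residual double integral is dispatched: the paper localises around each saddle and kills the contribution by the odd symmetry $(u,v)\mapsto(-u,-v)$ of the limiting Gaussian integrand (and handles the tails by the soft observation that the object is a probability density and hence cannot blow up), whereas you invoke a standard Laplace bound on steepest-descent contours --- which also works, provided you note that the $1/(z-w)$ singularity at the coalescence points is not actually ``removed'' but that the local contribution there is $O(N^{-1/2})$ after the zoom-in.
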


(The topology used to define compact sets on $(\mathbb{N}\times\mathbb{R}^2)^2$ 
is the product topology of discrete topology on $\mathbb{N}$ and 
Euclidean topology on $\mathbb{R}$.)

 Remark  that~\eqref{eqn:bead2} contains the 
transition density  $p$ of Brownian motion, defined in~\eqref{eqn:p-def},
rather than $p^*$, defined in~\eqref{eqn:pstar-def}. 
This Theorem is proved in Section~\ref{sec:bead}.
Remark too that, since the $K^\OU$ kernel could  only be used
along spacelike paths, the same is true for our time dependent
Bead kernel.
Finally notice that the kernel $K^\bead _a $ coincides
with Boutillier's kernel in~\cite{Bo09} with parameter $a$ in the 
special case $t=t'$.

Of course convergence of the kernel in this case  does not imply 
convergence of the processes, since the kernel only says something 
about the behaviour on space-like paths. 
Nor does this Theorem give any hint as to how one might 
construct such a dynamical version of a Bead process.
However, specialising the Theorem above to $t=t'$ leads to 
the following Corollary.
Though by no means unexpected, this result has to our knowledge 
not previously appeared in the litterature. 
\begin{corollary}
The GUE Minor process, defined in~\cite{JoNo06}, converges
in the same bulk scaling limit to Boutillier's bead process, 
defined in~\cite{Bo09}.
\end{corollary}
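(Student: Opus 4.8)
The plan is to derive this Corollary directly from Theorem~\ref{thm:main-theorem3} by specialising to the equal-time case $t=t'$. First I would observe that, as already noted in the text, when $t=t'$ the $K^\OU$ kernel restricted to a fixed time is precisely the GUE minor kernel of~\cite{JoNo06} (this is exactly the special case of Theorem~\ref{thm:main-theorem-1} with all $t_i$ equal; the sums over $l$ in the displayed formula for $K^\OU$ reduce to the known GUE minor kernel, since the factors $e^{-l(t'-t)}$ become $1$). Thus the left-hand scaling in Theorem~\ref{thm:main-theorem3}, with $t=t'=:s$, is nothing but the bulk scaling limit of the GUE minor kernel around $a\sqrt{2N}$: the factor $e^{-N(t'-t)}=1$, and one is left with $(4N)^{(n-n')/2}(2N)^{-1/2} K^{\mathrm{GUEminor}}\bigl((N+n, \sqrt{2N}a+x/\sqrt{2N}),(N+n', \sqrt{2N}a+x'/\sqrt{2N})\bigr)$.

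Next I would invoke Theorem~\ref{thm:main-theorem3} itself, which asserts that this limit equals $K^\bead_a((n,x,s),(n',x',s))$, with convergence uniform on compact sets. Setting $t=t'$ in~\eqref{eqn:bead} and~\eqref{eqn:bead2}, the term $\phi^\bead_a$ vanishes (the condition $(n,t)<(n',t')$ fails when $t=t'$ regardless of $n,n'$, since $(n,s)<(n',s)$ requires $n>n'$ but then one checks the $\phi$ piece still evaluates via $H^{n-n'}$ — actually one must be slightly careful: for $t=t'$ one has $(n,s)<(n',s)$ iff $n>n'$, and then $\phi^\bead_a$ has the $t'-t=0$ factor making $p_0$ a delta, so the integral collapses to $2^{(n-n')/2}H^{n-n'}\!*\delta$ evaluated appropriately), and the double-contour term degenerates so that the remaining single integral $\frac{1}{2\pi i}\int_{u_-}^{u_+} u^{n'-n} e^{u(x-y)}\,du$ — here with the $t'-t=0$ exponent gone — is exactly Boutillier's bead kernel with parameter $a$ from~\cite{Bo09}. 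I would state this identification explicitly, matching our normalisation of $h^*_n$ and the contour $[u_-,u_+]$ (the chord of the unit circle through $a$) against Boutillier's formulas.

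Finally I would remark that convergence of the correlation kernels on compact sets, together with the fact that both processes are determinantal with these kernels (the GUE minor process by~\cite{JoNo06}, the bead process by~\cite{Bo09}), yields convergence of all correlation functions $\rho_k$ and hence convergence of the point processes in the sense of convergence of finite-dimensional distributions. The main obstacle — really the only non-routine point — is bookkeeping of the degenerate $t=t'$ limit in~\eqref{eqn:bead}: one must check that the kernel formula for $K^\bead_a$, which was derived as a genuinely time-dependent object along space-like paths, reduces cleanly and without spurious terms to Boutillier's static kernel, in particular that the $\phi^\bead_a$ contribution and any boundary effects from collapsing the $v$-contour combine correctly with the principal value prescription on $[u_-,u_+]$. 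Everything else is a direct specialisation of the already-proved Theorem~\ref{thm:main-theorem3}.
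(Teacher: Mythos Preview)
Your approach is correct and is essentially the same as the paper's: specialise Theorem~\ref{thm:main-theorem3} to $t=t'$, identify the left-hand side with the GUE minor kernel and the right-hand side with Boutillier's kernel, and conclude process convergence from uniform kernel convergence.

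One point of confusion to clean up: you first assert that $\phi^\bead_a$ vanishes when $t=t'$, then partially retract this in your parenthetical. In fact, by the order defined in the paper, $(n,s)<(n',s)$ holds precisely when $n>n'$, so for $n>n'$ the term $\phi^\bead_a$ does \emph{not} vanish; with $t'-t=0$ it collapses to $2^{(n-n')/2}H^{n-n'}(x-x')$, and this is exactly the piece of Boutillier's kernel that handles the case $n>n'$. So there is no delicate cancellation or ``principal value prescription'' to worry about---the $\phi$ term and the contour integral together simply \emph{are} Boutillier's kernel once $t=t'$. Drop the hedging and state the identification directly.
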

\begin{proof} 
The kernel $K^\OU$ specialised to $t=t'$ is exactly the 
GUE Minor kernel and $K^\bead _a $ with $t=t'$ is Boutillier's kernel.
Uniform convergence on compacts for the kernels is necessary 
for process convergence. 
\end{proof}

The plan of the paper is to first, in Sections~\ref{sec:point-processes},
\ref{sec:l-ensemble} and~\ref{sec:tips},
outline the necessary  basic theory about point processes. 
This is mostly a verbose summary of~\cite{BoRa,BoFePrSa}. 
Section~\ref{sec:definitions} is devoted to 
computing certain convolution equalities and setting up 
clever notation so that Theorems~\ref{thm:main-theorem-1} 
and~\ref{thm:main-theorem-2} can be proved at the same time, 
performing the computation only once. 
In Section~\ref{sec:kernel} the actual computation is performed
and the article is rounded of by the asymptotic analysis in 
Section~\ref{sec:bead}.

The fact that the GUE minor kernel kan be extended to a dynamic
version in this way begs the question whether something similar
can be done with the Anti-symmetric GUE minor kernel 
from~\cite{De08,Def08,FoNo09}.
In recent works, see~\cite{BoFePrSaWa09,KoSch09},  random walks
conditioned  to stay in Weil chambers of the form
\begin{align}
0<x_1<x_2< \cdots < x_n
\intertext{and}
|x_1|<x_2<\cdots < x_n
\end{align}
and their diffusion limits have been analysed. 
It is reasonable to believe that such processes could be realised
by Ornstein-Uhlenbeck dynamics on Anti-symmetric
purely imaginary matrices of odd respectively even size.
If so then it appears this model could be analysed with the same 
tools used in this paper and would lead to a Theorem similar 
to~\ref{thm:main-theorem-1} but with and Anti-symmetric
Dyson Brownian minor kernel
\begin{multline}
K^\ADBM ((n, x, t), (n', x', t')) = \\
\begin{cases}
\displaystyle
\sum_{l=-\infty}^{-1}
\sqrt{
 \frac{(n' + 2l )!}{(n + 2l)!}
 }
e^{-2l(t'-t) }
h_{n+2l}^{*}(x)
h_{n'+2l}^{*}(x') 
e^{-(x')^2}, & \text{for $(n,t) \geq (n',t') $,}\\
\displaystyle
-\sum_{l=0}^{\infty}
\sqrt{
 \frac{(n' + 2l )!}{(n + 2l)!}
 }
e^{-2l(t'-t) }
h_{n+2l}^{*}(x)
h_{n'+2l}^{*}(x') 
e^{-(x')^2} , & \text{for $(n,t) <(n',t')$.}
\end{cases}
\end{multline}
That is beyond the scope of this paper, but this kernel
is in~\cite{BoFeSa09} recovered as a scaling limit in a certain model
related to the totally asymmetric simple exclusion process (TASEP).

While this article was being prepared it came to the attention 
of the authors that Patrik Ferrari and Ren\'e Frings~\cite{FeFr10}
were working on related problems. They prove an analog 
of~\ref{thm:main-theorem-1} for matrices whose elements evolve
as Brownian motions and also for the Laguerre ensemble.
\comment{
$ $Id: l-ensemble.tex,v 1.47 2011/10/13 14:21:57 enord Exp $ $
}
\comment{We begin with a not very brief review of 
the theory of determinantal processes. This may or may 
not make it to the final article.}

\section{Point processes}
\label{sec:point-processes}
Let $\Lambda$ be a complete, separable metric space with some reference measure
$\lambda$. Say $\mathbb{R}$ with the Lebesgue measure or $\mathbb{Z}$ with
counting measure. Let $M(\Lambda)$ be the set of integer valued and
locally finite measures on $\Lambda$. A \emph{point process} $X$ on $\Lambda$ 
is a measure on $M(\Lambda)$. It is beyond the scope of this article 
to give a complete overview of the theory of point processes, but some 
results which we use are detailed here. 

A point process can be represented as 
\begin{equation}
X = \sum_{i\in I} \delta_{x_i}
\end{equation}
where $(x_i)_{i\in I} $ are random variables which we shall refer to as the 
points or the particles of $X$. 
Think of this as a random configuration of points or particles on the 
space $\Lambda$. In this paper we shall only consider point processes 
which are simple, i.e. all $x_i$ are distinct.

To work with point processes it is convenient to define the so called 
\emph{correlation functions}. 
For $n=1, 2, \dots$, these are functions 
$\rho_n : \Lambda^n\rightarrow \mathbb R$.
When $\Lambda = \mathbb Z$  and $\lambda$ is counting measure
then 
\begin{equation*}
\rho_n(x_1, \dots, x_n) = \mathbb P[\text{There is a particle at each position $x_i$, for $i=1$, \dots, $n$}].
\end{equation*}
When $\Lambda = \mathbb R$  and $\lambda$ is Lebesgue measure
then 
\begin{equation*}
\rho_n(x_1, \dots, x_n) = \lim_{\epsilon\rightarrow 0}
\frac{
\mathbb P[\text{There is a particle in each of $[x_i, x_i+\epsilon)$, for $i=1$, \dots, $n$}]}{\epsilon^n}.
\end{equation*}
More generally, see~\cite{Jo06}, one can define correlation functions
by saying that for simple, measurable functions $\phi$ of bounded support, 
the point process satisfies
\begin{equation}
\label{eqn:correlationfunctions}
\mathbb E [\prod_{i\in I} (1+\phi(x_i) ) ] =
1+\sum_{n=1}^\infty \frac{1}{n!} \int_{\Lambda^n} 
\prod_{j=1}^n \phi(y_j) \rho_n(y_1,\dots,y_n)\, d\lambda^n (y).
\end{equation}

A \emph{determinantal point process} is a point process whose correlation 
functions have the special form 
\begin{equation}
\rho_n(y_1, \dots, y_n ) = \det [ K(y_i, y_j) ]_{i,j=1}^n,
\end{equation}
for some function $K:\Lambda^2 \rightarrow \mathbb C$. This is a very special 
and simple situation since all information about the point process is encoded
in this function $K$ of two variables which is called the correlation kernel.
Nonetheless processes of this kind are commonplace in mathematics today
arising from such diverse sources as tilings with rhombuses or dominoes 
of regions in the plane, random walks, eigenvalues of unitarily invariant 
random matrices and, as shown in the next section, so called L-ensembles.
Indeed, the main theorems of this paper, Theorems~\ref{thm:main-theorem-1} 
and~\ref{thm:main-theorem-2}, state that certain processes are indeed 
determinantal point processes. 
Also note that with correlations functions of this form,
the right hand side of of~\eqref{eqn:correlationfunctions} turns out 
to be the definition of the Fredholm determinant of the integral operator
with kernel $K(x,y)\phi(y)$, again see~\cite{Jo06}. 

Asymptotic analysis of such point processes can be performed by working with
the kernels only. 
\begin{proposition}[Proposition 2.1 in~\cite{JoNo06}]
\label{thm:determ-point-proc}
Let  $X^1$, $X^2$, \dots, $X^N$, \dots
be a sequence of determinantal point processes, 
and let $X^N$ have correlation kernel $K^N$ satisfying 
\begin{enumerate}
\item
$K^N\rightarrow K$, $N\rightarrow\infty$ pointwise, for some function $K$,
\item the $K^N$ are 
 uniformly bounded on compact sets in $\Lambda^2$ and
\item 
For $C$ compact, there exists some number $n=n(C)$ such that
\begin{equation*} 
\det[K^N(x_i,x_j)]_{1\leq i,j\leq m} = 0
\end{equation*} if $m\geq n$.
\end{enumerate}
Then there exists some determinantal point process $X$ with correlation
kernel $K$ such that  $X^N\rightarrow X$ weakly, $N\rightarrow \infty$.
\end{proposition}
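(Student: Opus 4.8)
The plan is to obtain the limiting process as a subsequential weak limit of the $X^N$ in the vague topology on $M(\Lambda)$, to identify its correlation functions as the determinants $\det[K(x_i,x_j)]$, and then to use uniqueness of a point process with prescribed correlation functions to upgrade subsequential convergence to convergence of the whole sequence.

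First I would extract from hypothesis (3) a uniform bound on the number of particles in a compact set. For $C\subset\Lambda$ compact and $m\geq 1$, the $m$-th factorial moment of $X^N(C)$ equals $\int_{C^m}\rho^N_m\,d\lambda^m=\int_{C^m}\det[K^N(x_i,x_j)]\,d\lambda^m$, which vanishes as soon as $m\geq n(C)$. Since $X^N(C)$ is a non-negative integer, this forces $X^N(C)\leq n(C)-1$ almost surely, with a bound independent of $N$; in particular $\sup_N\mathbb P[X^N(C)>n(C)-1]=0$ for every compact $C$, which is the standard tightness criterion for point processes. For test functions $\phi$ supported in $C$ the series in~\eqref{eqn:correlationfunctions} therefore reduces to a \emph{finite} sum, with at most $n(C)-1$ terms, for each $X^N$; and the same bound, passed to the pointwise limit, shows $\det[K(x_i,x_j)]=0$ on $C^m$ for $m\geq n(C)$ as well. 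Let $X$ be any subsequential weak limit.

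Next I would pass to the limit in the generating functional. By (1) and (2) the $K^N$ converge pointwise to $K$ and are uniformly bounded on compacts, so for $\phi$ continuous with support in $C$ each integrand $\prod_{j=1}^m\phi(y_j)\det[K^N(y_i,y_j)]$ converges pointwise and is dominated on $C^m$; dominated convergence then gives
\begin{equation*}
\mathbb E_{X^N}\!\left[\prod_i(1+\phi(x_i))\right]\longrightarrow 1+\sum_{m=1}^{n(C)-1}\frac{1}{m!}\int_{C^m}\prod_{j=1}^m\phi(y_j)\det[K(y_i,y_j)]\,d\lambda^m(y).
\end{equation*}
On the other hand, the uniform particle bound (which, by a Portmanteau argument, also holds for $X$) makes $\mu\mapsto\prod_i(1+\phi(x_i))$ a bounded continuous functional on the supports of the laws involved, so the left-hand side also converges to the corresponding generating functional of $X$. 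Since the generating functional --- equivalently the Laplace functional --- determines the law of a point process with a locally bounded number of points, $X$ must be the determinantal point process with correlation kernel $K$.

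Finally, the functions $\det[K(y_i,y_j)]$ are bounded on compact sets and vanish identically for $m$ large, so Lenard's moment condition is trivially satisfied: there is at most one point process with these correlation functions. Hence every subsequential weak limit of $(X^N)$ equals this one process $X$, and together with the tightness obtained in the first step this gives $X^N\Rightarrow X$. The delicate point is the third step: one has to be sure that the truncation of~\eqref{eqn:correlationfunctions} to $n(C)-1$ terms is uniform in $N$, and that (2) really dominates the determinantal integrands on the relevant compact regions, so that mere pointwise convergence of the kernels is strong enough to force convergence of the generating functionals --- this is exactly where hypotheses (2) and (3) do their work.
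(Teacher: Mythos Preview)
The paper does not prove this proposition at all; it is simply quoted as Proposition~2.1 of~\cite{JoNo06} and used as a black box, so there is no in-paper argument to compare against. Your sketch is the standard route (and essentially the one in~\cite{JoNo06}): hypothesis~(3) forces the $m$-th factorial moment of $X^N(C)$ to vanish for $m\geq n(C)$, hence $X^N(C)\leq n(C)-1$ almost surely and uniformly in $N$, which gives tightness; hypotheses~(1) and~(2) then let dominated convergence push the limit through the finitely many surviving terms of the expansion~\eqref{eqn:correlationfunctions}; and the same a.s.\ bound trivialises Lenard's uniqueness criterion, so every subsequential limit is the determinantal process with kernel $K$. The only places one might want to tighten are the continuity of $\mu\mapsto\prod_i(1+\phi(x_i))$ on $M(\Lambda)$ (it is not globally bounded or continuous, but the uniform bound $X^N(C)\leq n(C)-1$ restricts attention to a closed set on which it is) and the passage from generating functionals with continuous compactly supported $\phi$ to full weak convergence --- both routine.
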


\section{Introduction to L-ensembles}
\label{sec:l-ensemble}
This section summarises the exposition in~\cite{BoRa}.
The reader who wishes to pursue the subject of determinantal point
processes will find~\cite{Bor09,Ma} illuminating.
\subsection{Measure theory}
Let~$(\Omega$, $\Prob$, $\F)$ be a 
\emph{discrete probability space}.
That is, $\Omega$ is a finite set, $\mathcal{F}=2^\Omega$ is the
$\sigma$-algebra of subsets of~$\Omega$. 
Furthermore, $\mathbb{P}:\mathcal{F}\rightarrow \mathbb{R}$ is a measure
satisfying 
\begin{enumerate}
\item $ \Prob[\Omega]= 1$,
\item $ \Prob[\emptyset]= 0$,
\item $ E_1$, $E_2 \in \mathcal{F}$ and $ E_1\cap E_2 = \emptyset$
implies 
$\Prob[E_1 \cup E_2 ] = \Prob[E_1]+\Prob[E_2]$. 
\end{enumerate}
The last property is called additivity.

An element~$E\in \mathcal{F}$ is called an \emph{event} 
and~$\Prob[E]$ is called the probability of~$E$.
To sample the distribution~$\mathbb{P}$ means picking an
element~$\omega \in \Omega$.  
Since we are working with a finite space,
$\Prob[E]$ can be decomposed as a sum over
singleton sets, 
\begin{equation*}
\Prob[E\finespace]= \sum_{\omega \in E} \Prob[\{\omega\}].
\end{equation*}
\subsection{Point processes.}
We now specialise and consider probability spaces of the following
form. Take a finite set $\X$. 
A \emph{point process\/} on $\X$ is a probability 
space~$(\Omega=2^\X, \mathbb{P}, \mathcal{F}=2^\Omega=2^{2^\X})$.
To sample the point process means to pick an element of~$2^\X$, 
i.e., a subset of~$\X$. 
For compatibility with~\cite{BoRa}, 
we will use uppercase letters at the end of the alphabet,
$X$,~$Y$,~\dots, when speaking about elements of~$\Omega$. 

Given an~$X\in \Omega$, let~$E_\X(X)$
be the event  
$$
\mathcal{F} \ni E_\X(X)= 
\bigcup_{X \subseteq Y\subseteq \X}
\{ Y \}.
$$

The probabilistic interpretation is that~$E_\X(X)$
is the event that all~$x\in X$ are in the chosen set.
Note that~$E_\X : \Omega \rightarrow \mathcal{F}$.

\subsection{L-ensembles.}
Let us specialise even more. 
Take a matrix~$L$ of size~$|\X|\times|\X|$. 
We shall index the rows and columns of this matrix by $\X$.
The \emph{L-ensemble} on $\X$ is  a point process
$(\Omega=2^\X, \Prob, \F=2^{2^\X})$ 
such that, for $X\subseteq \X$, 
\begin{equation}
\label{eqn:L-measure}
\Prob[\{X\}] = \frac{\det L_X}{\det(\1 + L)}.
\end{equation}
Here $L_X$ means pick out those rows and columns that 
correspond to $X$, this giving a $|X|\times|X|$ matrix.
Also, $\1$ is the identity matrix of appropriate size.
For sets in $\F$ which are not singletons, the measure 
$\Prob$ is defined by the additivity property.

This is only a  well defined probability if the 
expression (\ref{eqn:L-measure}) is always positive, 
for example if $L$ is positive definite. The fact that 
the probabilities sum up to one is guaranteed by the following 
well known formula, the Fredholm expansion of a determinant.
\begin{lemma}
\label{thm:determinant-expansion}
Let  $M$ be a matrix whose rows and columns are indexed by 
the finite set $\X$. Then
\begin{equation*}
\det (\1 + M) = \sum_{X\subseteq \X} \det M_X,
\end{equation*}
with the understanding that the determinant of the empty matrix is $1$.
\end{lemma}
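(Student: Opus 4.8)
The plan is to prove the multilinear (Fredholm) expansion of $\det(\1+M)$ by direct combinatorial expansion of the determinant via the Leibniz permutation sum, grouping the terms according to the set of indices on which the chosen permutation differs from the identity. First I would write $\det(\1+M) = \sum_{\sigma \in \mathfrak{S}_\X} \operatorname{sgn}(\sigma) \prod_{x\in\X} (\1+M)_{x,\sigma(x)}$, and then expand each factor $(\1+M)_{x,\sigma(x)} = \delta_{x,\sigma(x)} + M_{x,\sigma(x)}$. Distributing the product over these two-term factors produces, for each permutation $\sigma$, a sum over subsets $Y\subseteq\X$ where $Y$ records the positions at which we pick the $M$-term rather than the $\delta$-term; the $\delta$-factors force $\sigma(x)=x$ for every $x\notin Y$, so the only permutations contributing to a given $Y$ are those fixing $\X\setminus Y$ pointwise, i.e. permutations of $Y$.

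The key step is then to reorganize the double sum by swapping the order of summation: sum first over $Y\subseteq\X$, and for each $Y$ sum over the permutations $\sigma$ of $Y$ (extended by the identity off $Y$). Since $\operatorname{sgn}(\sigma)$ of such an extended permutation equals the sign of its restriction $\sigma|_Y$ as a permutation of $Y$, the inner sum is exactly $\sum_{\tau\in\mathfrak{S}_Y}\operatorname{sgn}(\tau)\prod_{x\in Y} M_{x,\tau(x)} = \det M_Y$, where $M_Y$ denotes the submatrix of $M$ with rows and columns in $Y$. The empty set $Y=\emptyset$ contributes the term $1$ (the empty product, equivalently $\det$ of the empty matrix), which matches the stated convention. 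Summing over all $Y\subseteq\X$ gives $\det(\1+M) = \sum_{Y\subseteq\X}\det M_Y$, as claimed.

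There is no serious obstacle here; this is a standard identity. The only point requiring a little care is the bookkeeping of signs when passing between a permutation of $\X$ that fixes $\X\setminus Y$ and its restriction to $Y$ — one must check that no extra sign is introduced, which is immediate since a permutation acting as the identity on a subset has the same cycle structure (hence the same parity) whether viewed on $\X$ or on $Y$. With that observation in place the rearrangement of the sum is purely formal and the lemma follows.
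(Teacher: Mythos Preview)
Your proof is correct and complete. The paper does not actually give a proof of this lemma; it only remarks that it can be shown by induction on the size of the matrix and illustrates the $2\times 2$ case by using multilinearity of the determinant in each column (splitting off the $\1$-column one coordinate at a time) followed by cofactor expansion. Your approach is genuinely different: rather than peeling off one row/column at a time, you expand the full Leibniz sum and regroup by the support set $Y$ of the $M$-choices. Your argument is arguably cleaner and more self-contained, since it avoids the inductive bookkeeping and yields the identity in one stroke; the paper's suggested route has the minor advantage of being more elementary in the sense that it only uses column-multilinearity and Laplace expansion rather than the full permutation definition, but both are equally standard.
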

We do not show this, it can for example be done by induction 
over the size of the matrix. To convince the reader of the validity 
and triviality of the above lemma, let us see what happens with 
a $2\times2$ matrix.
\begin{multline*}
\det \begin{bmatrix} 1+a & b \\ c & 1+d \end{bmatrix} =
\det \begin{bmatrix} 1 & b \\0 & 1+d \end{bmatrix} +
\det \begin{bmatrix} a & b \\ c & 1+d \end{bmatrix} \\
=\det \begin{bmatrix} 1 & 0 \\0 & 1 \end{bmatrix} +
\det \begin{bmatrix} 1 & b \\0 & d \end{bmatrix} +
\det \begin{bmatrix} a & 0 \\ c & 1 \end{bmatrix} +
\det \begin{bmatrix} a & b \\ c & d \end{bmatrix} 
\ifthenelse{\lengthtest{\textwidth<15cm}}{\\}{}
=1 +
\det \begin{bmatrix}  d \end{bmatrix} +
\det \begin{bmatrix} a \end{bmatrix} +
\det \begin{bmatrix} a & b \\ c & d \end{bmatrix} 
\end{multline*}
The first equality comes from the fact that the determinant
is linear in the first and second column.
The second equality comes from expanding along rows. 

\begin{theorem}
Let $K=L(\1+L)^{-1}$. Then for all $X\in 2^\X$, 
\begin{equation*}
\Prob[E_\X(X)] = \det K_X.
\end{equation*}
\end{theorem}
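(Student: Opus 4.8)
The plan is to compute $\Prob[E_\X(X)]$ directly from the definition by decomposing the event $E_\X(X)$ into its constituent singletons and summing the L-ensemble weights. By definition $E_\X(X) = \bigcup_{X\subseteq Y\subseteq\X}\{Y\}$, so additivity gives
\begin{equation*}
\Prob[E_\X(X)] = \sum_{X\subseteq Y\subseteq\X} \Prob[\{Y\}] = \frac{1}{\det(\1+L)}\sum_{X\subseteq Y\subseteq\X} \det L_Y.
\end{equation*}
First I would reindex the sum by writing $Y = X\cup Z$ with $Z\subseteq \X\setminus X$, so that the sum becomes $\sum_{Z\subseteq\X\setminus X}\det L_{X\cup Z}$. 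The goal is then to show this equals $\det(\1+L)\cdot \det K_X$ where $K=L(\1+L)^{-1}$, i.e. that $\sum_{Z\subseteq\X\setminus X}\det L_{X\cup Z} = \det(\1+L)\det K_X$.

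The key algebraic step is a Schur-complement / cofactor identity. Write $A=\1+L$ and partition the index set as $X \sqcup (\X\setminus X)$, so $A$ has blocks $\begin{bmatrix} A_{XX} & A_{X,\bar X} \\ A_{\bar X, X} & A_{\bar X\bar X}\end{bmatrix}$ where $\bar X = \X\setminus X$. The principal-minor sum $\sum_{Z\subseteq\bar X}\det L_{X\cup Z}$ can be recognized, via Lemma~\ref{thm:determinant-expansion} applied to the submatrix indexed by $\bar X$, as a single determinant: expanding $\det L_{X\cup Z}$ along the rows/columns in $X$ and collecting, one gets
\begin{equation*}
\sum_{Z\subseteq\bar X}\det L_{X\cup Z} = \det\!\begin{bmatrix} L_{XX} & L_{X,\bar X} \\ L_{\bar X, X} & \1_{\bar X} + L_{\bar X\bar X}\end{bmatrix},
\end{equation*}
that is, the matrix $\1 + L$ with the identity block on the $X$-rows suppressed. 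Call this matrix $B$. It then remains to show $\det B = \det(\1+L)\,\det K_X$. Factor $B = A - (\text{the matrix with } \1_{XX} \text{ in the top-left block and zeros elsewhere})$; by the matrix determinant lemma (or by row operations writing $B = A\cdot(\1 - A^{-1}P_X)$ where $P_X$ is the coordinate projection onto $X$), one gets $\det B = \det A \cdot \det(\1 - A^{-1}P_X) = \det A \cdot \det(\1_{XX} - (A^{-1})_{XX})$, and since $A^{-1} = (\1+L)^{-1}$ we have $\1 - (A^{-1})_{XX} = (\1 - (\1+L)^{-1})_{XX} = (L(\1+L)^{-1})_{XX} = K_X$. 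This closes the chain.

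I expect the main obstacle to be the middle step — justifying the passage from the principal-minor sum $\sum_{Z\subseteq\bar X}\det L_{X\cup Z}$ to the single determinant $\det B$ cleanly. The cleanest route is probably to avoid it as a separate lemma and instead work block-wise from the start: apply the Fredholm expansion of Lemma~\ref{thm:determinant-expansion} not to all of $\1+L$ but to a version where only the $\bar X$-block carries the "$\1 + {}$", which is exactly the statement that $\det B = \sum_{Z\subseteq\bar X}(\text{principal minor of the }(X\cup Z)\text{ rows/columns of }B)$; and the $(X\cup Z)$-minor of $B$ equals $\det L_{X\cup Z}$ because $B$ and $L$ agree on all $X$-rows and on the off-diagonal blocks, differing only in the $\1_{\bar X}$ shift which only contributes when a row of $Z$ is included — care is needed here to check the diagonal bookkeeping matches. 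Once that identification is in hand, the determinant-lemma computation $\det B = \det(\1+L)\det K_X$ is routine linear algebra.
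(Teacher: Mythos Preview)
Your proposal is correct and follows essentially the same route as the paper: sum over $Y\supseteq X$, collapse the sum to the single determinant $\det(\1_{(\bar X)}+L)$ via the Fredholm-expansion variant you describe, then factor out $\1+L$ and reduce $\det(\1-\1_{(X)}(\1+L)^{-1})$ to the $X$-block determinant $\det K_X$. The step you flag as the ``main obstacle'' is exactly what the paper dispatches in one line as ``a variant of Lemma~\ref{thm:determinant-expansion}'', and your suggested ``cleanest route'' (apply multilinearity only on the $\bar X$-columns) is precisely that variant; the diagonal bookkeeping works because expanding along each $e_j$-column for $j\in\bar X\setminus Z$ removes the matching row and column with sign $+1$, leaving the principal minor $L_{X\cup Z}$.
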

The matrix $K$ is frequently called the correlation kernel
of $\Prob$.

\begin{proof}
Let $\1_{(X)} $ be the identity matrix with 
the ones corresponding to elements in $\bar X:=\X\setminus X$
set to zero.
\begin{align*}
\Prob[E_\X(X)]&=
\sum_{X \subseteq Y \subseteq \X}
\Prob[\{Y\}] \\
&=
\det (\1+L)^{-1}  
\sum_{X \subseteq Y \subseteq \X}
\det L_Y \\
&=\det (\1_{(\bar X)} + L)(\1+L)^{-1}\\
&=\det (\1+L - \1_{(X)}) (\1+L)^{-1}\\
&=\det (\1 - \1_{(X)} (\1+L)^{-1})\\
&=\det [\1 -  (\1+L)^{-1}]_X\\
&=\det [L(\1+L)^{-1}]_X
\end{align*}
The third equality is due to a variant of 
Lemma \ref{thm:determinant-expansion}.
\end{proof}

\subsection{Projection on a subspace.}
Given an L-ensemble on $\X$, let us take an arbitrary fixed 
subset $\N\subset \X$. Define it's conjugate $\bar \N:=\X \setminus \N$.
We want to study a 
certain projection $\Probs$ of $\Prob$ to $2^{2^\N}$.
This shall give us a smaller point process 
$(\Omega^*=2^\N, \Probs, \F^*=2^{2^\N})$, specified by 
\begin{equation*}
\Probs[\{D\}] = 
\frac{\Prob [\{D \cup \bar \N\,\}]}{\Prob[E_\X(\bar \N\,)]}.
\end{equation*}
Again $\Probs$ is defined  by additivity for 
events that aren't singletons. 
This should be thought of as a conditional probability. 
Compute for example 
\begin{align*}
\Probs[E_\N(D)] &= \Probs[\cup_{D\subseteq F \subseteq \N} \{F\,\}]\\
&=\sum_{D\subseteq F \subseteq \N} \Probs[ \{F\}] \displaybreak[0]\\
&=\sum_{D\subseteq F \subseteq \N} \frac{\Prob[ \{F\cup \bar \N\}]}{\Prob[E_\X(\bar \N)]}\\
&=\frac{\Prob[E_\X(D \cup \bar \N)]}{\Prob[E_\X(\bar \N)]}\\
&=\frac{\Prob[E_\X(D)  \cap E_\X(\bar \N)]}{\Prob[E_\X(\bar \N)]}.
\end{align*}
\begin{theorem}
\label{thm:projected-kernel}
Let $K^*=\1_{(\N)} - (\1_{(\N)}+L)^{-1} |_\N$. Then for all $X\in \F^*$, 
\begin{equation*}
\Probs[E_\N(X)] = \det K^*_X.
\end{equation*}
\end{theorem}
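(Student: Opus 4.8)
The plan is to repeat, almost verbatim, the determinant computation used in the earlier proof that $\Prob[E_\X(X)] = \det K_X$, applied now to the two events occurring in the definition of $\Probs$, and then to finish with one Schur-type manipulation. First I would start from the identity already displayed just above the statement,
\begin{equation*}
\Probs[E_\N(X)] = \frac{\Prob[E_\X(X)\cap E_\X(\bar\N)]}{\Prob[E_\X(\bar\N)]} = \frac{\Prob[E_\X(X\cup\bar\N)]}{\Prob[E_\X(\bar\N)]},
\end{equation*}
where the second equality uses $E_\X(X)\cap E_\X(\bar\N) = E_\X(X\cup\bar\N)$ (a chosen set contains both $X$ and $\bar\N$ iff it contains their union). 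Then I would invoke the intermediate line of the earlier proof: summing $\Prob[\{Y\}] = \det L_Y/\det(\1+L)$ over $Z\subseteq Y\subseteq\X$ and using the variant of Lemma~\ref{thm:determinant-expansion} that $\sum_{Z\subseteq Y\subseteq\X}\det L_Y = \det(\1_{(\bar Z)}+L)$ gives $\Prob[E_\X(Z)] = \det(\1_{(\bar Z)}+L)(\1+L)^{-1}$ for every $Z\subseteq\X$. Applying this with $Z = X\cup\bar\N$, whose complement in $\X$ is $\N\setminus X$ (here $X\subseteq\N$), and with $Z = \bar\N$, whose complement is $\N$, the common factor $\det(\1+L)^{-1}$ cancels in the ratio and I am left with
\begin{equation*}
\Probs[E_\N(X)] = \frac{\det(\1_{(\N\setminus X)}+L)}{\det(\1_{(\N)}+L)}.
\end{equation*}

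Next, since $X\subseteq\N$ we have $\1_{(\N\setminus X)} = \1_{(\N)} - \1_{(X)}$, so the numerator is $\det(\1_{(\N)}+L-\1_{(X)})$; factoring $\1_{(\N)}+L$ out on the right (it is invertible because $L$, hence $\1_{(\N)}+L$, is positive definite) turns this into $\det\bigl(\1 - \1_{(X)}(\1_{(\N)}+L)^{-1}\bigr)\det(\1_{(\N)}+L)$, and the second factor cancels the denominator. Finally, because $\1_{(X)}M$ has nonzero entries only in the rows indexed by $X$, the matrix $\1 - \1_{(X)}M$ agrees with the identity in every row outside $X$; a cofactor expansion along those rows gives $\det(\1 - \1_{(X)}M) = \det\bigl([\1 - M]_X\bigr)$. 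Taking $M = (\1_{(\N)}+L)^{-1}$ and noting that on indices in $X\subseteq\N$ the matrix $\1 - (\1_{(\N)}+L)^{-1}$ has the same entries as $\1_{(\N)} - (\1_{(\N)}+L)^{-1}|_\N = K^*$, this determinant is $\det K^*_X$, as claimed.

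The individual steps are routine: the Fredholm-style summation is the same one already used in the excerpt, and the cofactor expansion is elementary. The part I expect to need the most care — more as a matter of presentation than of substance — is the size bookkeeping at the end: $(\1_{(\N)}+L)^{-1}$ is an $|\X|\times|\X|$ matrix whereas $K^*$ is defined on $\N$, so one must verify that restricting the full inverse to the rows and columns in $X$ coincides with first passing to the $\N\times\N$ block $(\1_{(\N)}+L)^{-1}|_\N$ and then restricting to $X$, which is exactly where $X\subseteq\N$ enters. I would also record at the outset that positive definiteness of $L$ guarantees $\1_{(\N)}+L$ is invertible and $\Prob[E_\X(\bar\N)] > 0$, so that the conditional probability defining $\Probs$ is meaningful.
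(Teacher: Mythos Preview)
Your argument is correct. The paper does not actually prove this theorem---it defers to~\cite{BoRa}---but your proof is precisely the natural adaptation of the preceding theorem's computation to the conditional setting, and is essentially the argument given in that reference.
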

Here, $|_\N$ means pick out those rows and colums that correspond to $\N$.
For proof see~\cite{BoRa}.

\subsection{Eynard-Mehta Theorem}
\label{sec:Eynard-Mehta}
Again let us specialise. We are interested in studying a 
point process on the space 
$\N=\X^{(0)} \sqcup \X^{(1)} \sqcup \dots \sqcup \X^{(N)}$
which is the disjoint union 
of $N$ finite  sets $\X^{(n)}$  for $n=1$, $2$, \dots, $N$.

A sample $x\in \mathcal N$ can be written
 \[ \bar x= (x^{(0)}, x^{(1)}, \dots , x^{(N)})\]
where $x^{(n) } \in 2^{\X^{(n)}}$.
Fix an integer $p$ and the following functions:
\begin{align} 
\phi_k:&\X^{(0)}\rightarrow \Real,\notag\\
\label{eqn:W-def}
W_n:&\X^{(n)}\times \X^{(n+1)}\rightarrow \Real\text{, and}\\
\psi_k:&\X^{(N)}\rightarrow \Real\notag
\end{align}
for $n=0$, \dots, $N-1$ and $k=1$, \dots, $p$.
The measure we are interested in has the form 
\begin{multline}
\label{eqn:measure}
\Probs[\{\bar x\}]=Z^{-1} \det [\phi_k(x_l^{(0)})]_{k,l=1}^p
\det [W_0(x_k^{(0)}, x_l^{(1)})]_{k,l=1}^p
\times \\\cdots \times 
\det [W_{N-1}(x_k^{(N-1)}, x_l^{(N)})]_{k,l=1}^p
\det [\psi_k(x_l^{(N)})]_{k,l=1}^p
\end{multline}
if $|x^{(1)}|=|x^{(2)}|=\dots =|x^{(N)}|=p$ 
and $\Prob^*[\{\bar x\}]=0$ otherwise. 
Here $x^{(n)}_l$ is the $l$:th element of the set $x^{(n)}$
for $n=1$, \dots, $N$ and $l=1$, \dots, $p$. 
For that to be well defined one needs a total ordering on $\X^{(n)}$,
but which one we use is not important. 
Note that the measure $\Probs:2^{2^\N}\rightarrow \Real$ is
defined by (\ref{eqn:measure}) for singleton sets and 
by additivity for all other sets in $2^{2^\N}$.
Measures of this form turn up everywhere in random matrix theory 
and related combinatorial models.

The idea now is to extend the space on which the 
point process lives in such a way that the new bigger
process admits an $L$-ensemble representation. 
Let $\bar \N=\{1, 2, \dots, p \}$ and let $\X=\bar\N\sqcup \N$.
The measure $\Probs$ can then be expressed as
\begin{equation*}
\Probs[\{\bar x\}] = 
\frac{\Prob[\{\bar x \sqcup \bar \N\}]}{\Prob[E_\X(\bar \N)]}
\end{equation*}
where $\Prob$ is the measure defined by (\ref{eqn:L-measure})
where 
\begin{equation}
\label{eqn:l-matrix}
L=\begin{bmatrix}
0 & \Phi & 0&0&\dots &0&0\\
0 & 0& -W_0 &0&\dots &0&0\\
0 & 0& 0 & -W_1 &\dots &0&0\\
\vdots \\
0 & 0 & 0&0&\dots &0&-W_{N-1}\\
\Psi & 0 & 0&0&\dots &0&0
\end{bmatrix}.
\end{equation}
Here $\Phi$, $W_1$, \dots, $W_{N-1}$ and $\Psi$
are certain blocks and $0$ means the
zero matrix of appropriate dimension.
The minus signs are for convenience later.
Recall that the $L$-matrix should be of size
$|\X| \times |\X|$ and that it's rows and columns are 
indexed by elements of 
$\X=\bar\N\sqcup\X^{(1)} \sqcup \X^{(2)} \sqcup \dots \sqcup \X^{(N)}$.
The determinants of the various blocks in~\eqref{eqn:l-matrix}
will be exactly the determinants that occur in~\eqref{eqn:measure}.

Here, $\Phi$ and $\Psi$ are matrices of dimension 
$p\times |\X^{(0)}|$ and $|\X^{(N)}|\times p$ respectively defined by 
\begin{align*}
[\Phi]_{n,x} &= \phi_n(x) && \text{for $n\in \bar\N$ and $x\in\X^{(0)}$,}\\
[\Psi]_{x,n}&= \psi_n(x) && \text{for  $x\in\X^{(N)}$ and $n\in\bar \N$.}
\end{align*}
The matrices $W_n$ for $n=1$, \dots, $N$ are 
of size $|\X^{(n)}|\times |\X^{(n+1)}|$ and defined by (\ref{eqn:W-def}).
Let 
\begin{equation}
\label{eq:W}
W_{[n,m)}= 
\begin{cases}
W_{n} \dots W_{m-1}, &n<m,\\
0, &n\geq m.
\end{cases}
\end{equation}
\begin{theorem}[Eynard-Mehta Theorem]
\label{thm:EynardMehta}
Assume 
\begin{equation}
\label{eqn:m-matrix}
M:=\Phi W_0 \cdots W_{N-1} \Psi
\end{equation}
is invertible. 
Then there exists a  correlation kernel $K^*$
for the  measure $\Prob^*$, that is,
\begin{equation}
\Probs[E_\N (X)] = \det K^*_X.
\end{equation}
This matrix can be written in block form 
\begin{equation*}
K^*=
\begin{bmatrix}
K^*_{0,0} &\dots & K^*_{0,N} \\
\vdots \\
K^*_{N,0} &\dots & K^*_{N,N} \\
\end{bmatrix}
\end{equation*}
where 
\begin{equation}
\label{eqn:EM-correlation-function}
K^*_{n,m} = W_{[n,N)} \Psi M^{-1} \Phi W_{[0, m)} - W_{[n,m)}.
\end{equation}
\end{theorem}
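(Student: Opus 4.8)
The plan is to deduce this from Theorem~\ref{thm:projected-kernel} by computing the projected kernel $K^* = \1_{(\N)} - (\1_{(\N)} + L)^{-1}|_\N$ explicitly for the block matrix $L$ given in~\eqref{eqn:l-matrix}. First I would check that the measure $\Prob$ built from this $L$ via~\eqref{eqn:L-measure} really does restrict, under the projection of Theorem~\ref{thm:projected-kernel}, to the measure $\Probs$ of~\eqref{eqn:measure}: this is where the block structure of $L$ and Lemma~\ref{thm:determinant-expansion} enter, since expanding $\det L_Y$ over subsets $Y = \bar x \sqcup \bar\N$ forces the cardinality constraint $|x^{(1)}| = \cdots = |x^{(N)}| = p$ (any other choice makes some block rectangular, killing the determinant) and produces precisely the product of determinants of $\Phi$, the $W_n$ and $\Psi$, with the sign from the $-W_n$'s absorbed into the cyclic row permutation. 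The normalisation $Z = \Prob[E_\X(\bar\N)]$ then matches because of the identity from the proof of Theorem~\ref{thm:projected-kernel} together with the fact that $\det(\1_{(\N)} + L)$ reduces to $\det M$ (up to sign) — this is essentially the same block computation run with $L$ in place of its submatrices.

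The heart of the matter is inverting $\1_{(\N)} + L$. Writing $G = (\1_{(\N)} + L)^{-1}$, I would partition $G$ into blocks $G_{n,m}$ indexed the same way as $L$ (so block $0$ is the $\bar\N$-block). The equations $G(\1_{(\N)} + L) = \1$ and $(\1_{(\N)} + L)G = \1$ become a finite system of matrix relations among the $G_{n,m}$; because $L$ is block-bipartite (only the superdiagonal blocks $-W_n$, the block $\Phi$ from $\bar\N$ to $\X^{(0)}$, and the block $\Psi$ from $\X^{(N)}$ back to $\bar\N$ are nonzero) this system telescopes. Solving it I expect to find, for the $\N$-$\N$ blocks, exactly
\[
(\1_{(\N)} + L)^{-1}\big|_{n,m} = \1\,\delta_{n,m} + W_{[n,m)} - W_{[n,N)}\,\Psi\, M^{-1}\, \Phi\, W_{[0,m)},
\]
with $M = \Phi W_0\cdots W_{N-1}\Psi$ appearing as the Schur complement of the $\N$-part (hence the invertibility hypothesis on $M$ is exactly what is needed), and then $K^*_{n,m} = \1\,\delta_{n,m} - (\1_{(\N)}+L)^{-1}|_{n,m}$ gives~\eqref{eqn:EM-correlation-function} after the $\delta$'s cancel, using $W_{[n,m)} = 0$ for $n \ge m$.

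The main obstacle is purely bookkeeping: getting the block indexing, the convention for $W_{[n,m)}$ in~\eqref{eq:W}, and the signs coming from the $-W_n$ entries of $L$ to all line up so that the final formula has no stray signs. A clean way to organize the inversion, which I would use, is to introduce the nilpotent part: note $L$ restricted to $\N$ is strictly upper-triangular in the block sense with entries $-W_n$, so $(\1 + L_\N)^{-1} = \sum_k (-L_\N)^k$ is a finite sum whose $(n,m)$ block is $W_{[n,m)}$ (the alternating signs of the geometric series cancel the minus signs in the blocks), and then handle the rank-$\le p$ correction coming from the $\bar\N$ row and column ($\Phi$ and $\Psi$) by a Woodbury-type identity, which is exactly what produces the $M^{-1}$ term. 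Verifying the edge cases $n=0$, $m=N$ and $n \ge m$ by hand then completes the proof.
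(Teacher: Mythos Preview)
Your proposal is correct and, in its ``clean way'' at the end, coincides with the paper's proof: the paper decomposes $\1_{(\N)}+L$ into a $2\times 2$ block matrix (the $\bar\N$-block versus the $\N$-block), quotes a Schur-complement inversion identity~\eqref{eqn:matrix-inverse} with $A=0$, computes $D^{-1}$ as the finite geometric series of the nilpotent $-W_n$'s to get the $W_{[n,m)}$'s, and reads off $K^*=\1-D^{-1}+D^{-1}CM^{-1}BD^{-1}$. Your Woodbury/rank-$p$-correction description is exactly this; the only difference is that the paper skips the verification that the $L$-ensemble reproduces~\eqref{eqn:measure} and goes straight to the inversion.
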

Note that the block $K^*_{n,m}$ is of size $|\X^{(n)}|\times |\X^{(m)}|$.
\begin{proof}
We shall use the following matrix identity. 
\begin{equation}
\label{eqn:matrix-inverse}
\begin{bmatrix}
A & B \\
C & D
\end{bmatrix}^{-1}
=
\begin{bmatrix}
-M\inv & M\inv B D\inv  \\
D\inv C M\inv  & D\inv - D\inv C M\inv B D\inv 
\end{bmatrix}
\end{equation}
Here, $A$ and $B$ must be square blocks and $M=BD\inv C - A$. 
This is easy to verify by explicit computation. 

Now according to Theorem \ref{thm:projected-kernel} we need to
invert $1_{(\N)} + L$ which can be decomposed as the left hand side of 
(\ref{eqn:matrix-inverse}) with $A=0$, $B=[ \Phi, 0, 0, \dots]$,
\begin{equation}
\label{eqn:dinv}
D\inv
=\begin{bmatrix}
\1& -W_0 &0&\dots &0\\
0& \1 & -W_1 &\dots &0\\
0& 0 & \1 &\dots &0\\
\vdots &&& \ddots\\
0 & 0&0&\dots &\1
\end{bmatrix}\inv 
=\begin{bmatrix}
\1& W_{[0,1)}  &W_{[0,2)}&\dots &W_{[0,N)}\\
0 & \1& W_{[1,2)} &\dots &W_{[1,N)}\\
0 & 0 & \1& \dots &W_{[2,N)}\\
\vdots &&& \ddots \\
0 & 0&0&\dots &\1
\end{bmatrix}
\end{equation}
and $C$ appropriately chosen.

Thus 
\begin{align*}
BD\inv& = [\Phi \quad \Phi W_{[0,1)} \quad \dots \quad \Phi W_{[0,N)} ],
&
D\inv C&= \begin{bmatrix}
 W_{[0,N)}\Psi\\
 W_{[1,N)}\Psi\\
\vdots\\
 \Psi
\end{bmatrix},
\end{align*}
and $M$ is as given by (\ref{eqn:m-matrix}).
Applying  (\ref{eqn:matrix-inverse})  to the formula 
in Theorem \ref{thm:projected-kernel} gives
\[
K^*= \1 -  D\inv + D\inv C M\inv B D\inv.
\]
Inserting the various ingredients above into this 
formula proves the theorem.
\end{proof}

\section{ Tips and Tricks} 
\label{sec:tips}
To compute the kernel in the main Theorem, the following
additional ideas are needed. 
None of these are new but for the instruction of 
the reader they are summarised here.
\subsection{Continuous state space}
In the version of the Eynard-Mehta theorem above, Theorem \ref{thm:EynardMehta},
the state spaces $\X^{(n)}$, for $n=1$, \dots, $N$, are finite. 
In the literature a version with $\X^{(n)} = \Real$ for all $n$ 
is more common. We shall now expound on the relationship
between these two versions of the same  useful theorem. 

Again we are faced with analysing a measure on the form 
(\ref{eqn:measure}) but now \\
$\Probs[\{x^*\}] $ is the probability density of configuration 
$x^*$,\\
$\phi_n:\Real \rightarrow \Real$, for $n=1$, \dots, $p$,\\
$W_n : \Real\times \Real \rightarrow \Real$, for $n=1$, \dots, $N$,\\
$\psi_n:\Real \rightarrow \Real$, for $n=1$, \dots, $p$.

The state space is now 
\begin{equation}
\label{eqn:continuous-state-space}
\Real\sqcup \Real \sqcup \cdots \sqcup \Real = N \times \Real.
\end{equation}

Pick some discretisation $\M$ of the real line, i.e.\ some 
sequence $\M_1$, $\M_2$, \dots\ of $|M|$ real numbers.
Restricting the measure in (\ref{eqn:measure})---with 
state space given by (\ref{eqn:continuous-state-space})---to 
the state space 
\begin{equation}
\M\sqcup \M \sqcup \cdots \sqcup \M = N \times \M
\end{equation}
gives a measure on a discrete set of exactly the type
to which Theorem~\ref{thm:EynardMehta} applies.
Now all the blocks in for example (\ref{eqn:dinv}) are
$|\M| \times |\M|$ and thus $D$ is a matrix of 
size $(N |\M| ) \times ( N |\M|)$. 
The idea is of course to take the limit $|\M|\rightarrow\infty$.

The correlation kernel (\ref{eqn:EM-correlation-function})
is computed through suitable matrix multiplications and
inversions. Consider for example the matrix multiplication 
of $W_1$ and $W_2$.
\begin{equation*}
\frac{1}{|\M|}
[W_1 W_2]_{x,z} = 
\frac{1}{|\M|}
\sum_{ y \in \M } [W_1 ]_{x,y}[ W_2]_{y,z} 
\rightarrow \int_\Real W_1(x,y) W_2(y,z)\,dy, \quad |\M| \longrightarrow \infty
\end{equation*}
The constant $|\M|\inv$ can be absorbed into the normalisation 
constant $Z$. 
Thus we see that all the matrix multiplications in the
expression (\ref{eqn:EM-correlation-function}) turn into
convolutions of the corresponding functions in 
the continuous setting. 
In later sections we will blur the line 
between discrete and continuous by sometimes using
the notation of matrix multiplication for convolutions.
It is understood that one needs to check convergence and integrability when 
one goes from the discrete to the continuous. 
That offers no problem in our examples so nothing further will be said on
that score.

\subsection{Unequal number of particles on each level.}
We shall illustrate this by the example of the GUE Minor process. 
Consider a GUE random matrix $M$ of size $N\times N$. 
Let the eigenvalues of the $n\times n$ minor, that is 
$[M_{ij}]_{i,j=1}^n$, be denoted $\lambda^{(n)}_1 > \cdots  > \lambda^{(n)}_n$.
Then these vectors $\lambda^{(1)}$, \dots, $\lambda^{(N)}$
can be seen as random variables.

For this process, the probability measure for all the variables 
$\bar \lambda = ( \lambda^{(1)}, \dots, \lambda^{(N)})$ is absolutely 
continuous with respect to the Lebesgue measure,
thus it has a \emph{probability density function} (p.d.f.).
It can be written \cite{Ba,JoNo06,FoNa} as 
\begin{equation}
p(\bar\lambda)= \frac{1}{C} \,
\1\{\lambda^{(1)} \prec \lambda^{(2)}\}
\cdots
\1\{\lambda^{(N-1)} \prec \lambda^{(N)}\} \,
\Delta(\lambda^{(N)}) 
\prod_{n=1}^N e^{-(\lambda^{(N)}_n)^2}.
\end{equation}
Here, $\lambda^{(n)} \prec \lambda^{(n+1)}$ means
$\lambda^{(n)}$ and $ \lambda^{(n+1)}$ interlace, 
i.e.\ $\lambda^{(n+1)}_1 \leq \lambda^{(n)}_1\leq \lambda^{(n+1)}_2\leq \cdots\leq \lambda^{(n+1)}_{n+1}$. 
We use the increasingly common notation that $\Delta$ denotes the Vandermonde 
determinant.
It turns out that it is practical to introduce fictitious (or virtual) 
variables 
$\lambda^{(0)}_1=\lambda^{(1)}_2=\lambda^{(2)}_3=\cdots=\lambda^{(N-1)}_N=-\infty$. 
Then the interlacing condition can be written in terms of 
determinants~\cite{Wa}
using the Heaviside function $H(x)=\1\{x\geq 0\}$ and the above p.d.f. becomes
\begin{multline}
\label{eqn:gue-measure}
p(\bar\lambda)= \frac{1}{C} 
\det[H(\lambda^{(1)}_i- \lambda^{(0)}_j )]_{i,j=1}^1 ) 
\det[H(\lambda^{(2)}_i- \lambda^{(1)}_j) ]_{i,j=1}^2 ) 
\cdots\\
\cdots\det[H(\lambda^{(N)}_i- \lambda^{(N-1)}_j )] )_{i,j=1}^N
\Delta(\lambda^{(N)}) 
\prod_{n=1}^N  e^{-(\lambda^{(N)}_n)^2}.
\end{multline}

The reader must agree that this vaguely resembles 
(\ref{eqn:measure}) except that the dimension of the matrices 
change. The first is a $1\times 1$ determinant and the 
last of size $N\times N$ for example.
Notice too that the last column of all these matrices 
is identically one because of our choice of fictitious particles above. 

The way to deal with this, first discovered in \cite{BoFePrSa}, is to 
form an $L$-matrix similar to the one in (\ref{eqn:l-matrix})
but which looks like this.
\begin{equation*}
\label{eqn:l-matrix-GUEm}
L=\begin{bmatrix}
0 & \Phi & 0&0&\dots &0&0\\
E_0 & 0& -W_0 &0&\dots &0&0\\
E_1 & 0& 0 & -W_1 &\dots &0&0\\
\vdots \\
E_{N-1} & 0 & 0&0&\dots &0&-W_{N-1}\\
\end{bmatrix}
\end{equation*}
Here, in the example of the GUE Minor process, 
 $[W_n]_{x,y}=H(y-x)$ for $n=0$, $1$, \dots, $N-1$, and $x$,~$y\in\M$. 
For the matrices in~\eqref{eqn:gue-measure} the last column---which 
is identically one---is moved out to the first column of blocks. 
Thus a sequence of $\M \times N$ matrices $E_0$, \dots, $E_{N-1}$
are produced such that, for $m=1$, \dots, $N$, 
\[
[E_n]_{x,m}=
\begin{cases}
1, & n+1=m,\\
0, & n+1\neq m.
\end{cases}
\]
By cranking this machinery it is possible to compute 
the correlation kernel for the GUE Minor process. 
This is done in \cite{BoFePrSa,FoNa}.

\subsection{Column operations on the kernel}
\label{sec:column-operations}
Recall the expression for the kernel in (\ref{eqn:EM-correlation-function}).
It is sometimes favourable to perform some column operations
on the matrices $W_{[n,N)} \Psi$ for $n=0$, \dots, $N-1$. 
Doing \emph{column operations} means multiplying from 
the right with an upper triangular or 
lower triangular matrix, say $R_m$, which 
must be invertible. 
Typically, but not necessarily,  it will have ones on the diagonal
 and a single off-diagonal entry. 
The kernel~\eqref{eqn:EM-correlation-function} then takes the form 
\begin{equation*}
K^*_{n,m} = W_{[n,N)} \Psi R_n (MR_n)\inv \Phi W_{[0, m)} - W_{[n,m)}
\end{equation*}
for $n$, $m\in\{0, \dots, N-1\}$.

\section{Markovness along space-like paths}
\label{sec:markov}
It is a sad fact of life that no transition density for
the Warren process is known explicitly. We do however know
certain marginals. To fix notation let 
$\bar x = (x^{(1)}, \dots, x^{(n)})$ where $x^{(k)}\in\mathbb{R}^k$.
Let $I^{(n)}(\bar x)=1$ if the interlacing $x^{(n)}\succ  \cdots \succ x^{(1)}$ holds
and $0$ otherwise. 
Let $P^{(n)} _t $ be the transition density for Warren's process which 
we know exists since it is a well defined stochastic process. 
In this notation, \cite[Proposition 6]{Wa} can be restated as follows.
\begin{proposition}
\label{prop:wa1}
For fixed $n$, $x^{(n)} $ and $\bar y=(y^{(1)}, \dots, y^{(n)})$, 
\begin{equation}
\int \frac{I^{(n)}(\bar x)}{\Delta(x^{(n)})} P^{(n)}_t(\bar x, \bar y) 
\, dx^{(1)}\cdots dx^{(n-1)} = 
\frac{I^{(n)}(\bar y)}{\Delta(y^{(n)})} p^{(n)}_t(x^{(n)}, y^{(n)}) 
\end{equation}
where 
\begin{equation}
p^{(n)}_t(x,y) = \frac{\Delta(y)}{\Delta(x)} 
\det\left[ e^{-(x_i-y_j)^2/t} \right]_{i,j=1}^n.
\end{equation}
\end{proposition}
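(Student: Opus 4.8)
The plan is to recognize this as a direct consequence of the Karlin--McGregor / reflection structure underlying Warren's construction, combined with a Fubini argument that collapses the interlacing integrals one level at a time. First I would recall that the Warren process on $N\times\mathbb{R}$ is built so that, conditionally on the top level $x^{(n)}(\cdot)$, the lower levels are independent Brownian motions reflected off the level above; consequently the joint transition density $P^{(n)}_t(\bar x,\bar y)$ factors as the product of the one-level transition densities for each coordinate of each level, restricted to the space-time region where interlacing is maintained. The weight $1/\Delta(x^{(n)})$ is exactly the Doob $h$-function that turns the top $n$ coordinates into a Dyson Brownian motion, so the right-hand side is just the known marginal of the top level; what must be shown is that integrating out the $n-1$ lower levels against $I^{(n)}(\bar x)/\Delta(x^{(n)})$ reproduces precisely the Dyson transition density $p^{(n)}_t$ on the top level times the interlacing indicator $I^{(n)}(\bar y)$ and the factor $1/\Delta(y^{(n)})$.

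The key computational step is the iterated integral identity. I would expand $I^{(n)}(\bar x)$ and $I^{(n)}(\bar y)$ as products of Heaviside functions expressing each consecutive interlacing $x^{(k+1)}\succ x^{(k)}$, and write $P^{(n)}_t$ as the corresponding product of heat kernels over all coordinates. Then the integral over $x^{(1)},\dots,x^{(n-1)}$ factorizes into a nested sequence of one-dimensional convolutions. At each level $k$, one integrates a single coordinate $x^{(k)}_j$ of the Brownian bridge against the two Heaviside constraints sandwiching it between neighbours on levels $k-1$ and $k+1$; using the Karlin--McGregor determinantal identity (the same mechanism as in \eqref{eqn:dyson-bm3}) together with the reflection principle, each such layer of integration telescopes, peeling off one row and one column of the Karlin--McGregor determinant and leaving behind a determinant of one smaller size on the level above. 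Carrying this out from level $1$ up to level $n-1$ leaves exactly $\det[e^{-(x^{(n)}_i - y^{(n)}_j)^2/t}]_{i,j=1}^n$, and collecting the surviving boundary factors yields $\Delta(y^{(n)})/\Delta(x^{(n)})$ together with $I^{(n)}(\bar y)$, which is the claimed formula. Since the statement is quoted from \cite[Proposition~6]{Wa}, I would in fact cite that reference and only indicate this argument as the reason it holds; the proposition is invoked here as a black box.

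The main obstacle is bookkeeping rather than conceptual: one must be careful that the interlacing indicator on the $\bar y$ side emerges correctly from the integration, since a priori integrating Heaviside functions against Gaussians produces error-function--type expressions, and it is only after the full antisymmetrization implicit in the Karlin--McGregor determinant that these collapse to a clean indicator times a Vandermonde ratio. Keeping track of the ordering conventions (the $\succ$ versus $\prec$ direction, and which coordinate of level $k$ is fixed at $-\infty$ in the virtual-particle sense of \eqref{eqn:gue-measure}) is where sign errors most easily creep in. For the purposes of this paper, however, all of this is contained in \cite{Wa}, so no independent proof is needed and the proposition will simply be used as stated in the sequel.
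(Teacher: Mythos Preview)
Your bottom line matches the paper exactly: the paper gives no proof of this proposition, introducing it simply as a restatement of \cite[Proposition~6]{Wa} and using it as a black box. So your proposal to cite Warren and move on is precisely what the paper does.

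One caveat about the sketch you offer along the way: the description of Warren's construction is inverted. In \cite{Wa} (and as summarized in the introduction here), the process is built bottom-up: level $k+1$ consists of Brownian motions reflected off the level-$k$ particles, not the other way around. So it is not true that, conditionally on the top level, the lower levels are independent reflected Brownian motions; the conditioning goes in the opposite direction. Consequently the factorization of $P^{(n)}_t$ you describe and the ``peeling off one row and one column'' telescoping are not quite the right mechanism. Warren's actual argument for his Proposition~6 is an intertwining relation between the semigroups $P^{(n)}_t$ and $p^{(n)}_t$ via the Markov kernel that sends $x^{(n)}$ to a uniform interlacing array below it, rather than a direct Karlin--McGregor collapse of the lower-level integrals. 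Since you (and the paper) ultimately defer to \cite{Wa}, this does not affect the validity of what you write, but the heuristic justification should not be trusted as stated.
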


Also, by the characterization after (30) in Warren's paper,
it is clear that 
\begin{proposition}
\label{prop:wa2}
For fixed $n$, $x^{(1)}$, \dots, $x^{(n-1)}$, $y^{(1)}$, \dots, $y^{(n-1)}$,
\begin{equation}
\int  P^{(n)}_t(x^{(1)}, \dots, x^{(n)}; y^{(1)}, \dots, y^{(n)}) 
\,  dy^{(n)} = P^{(n-1)}_t(x^{(1)}, \dots, x^{(n-1)}; y^{(1)}, \dots, y^{(n-1)}).
\end{equation}
\end{proposition}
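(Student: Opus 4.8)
The plan is to deduce this consistency statement directly from Warren's construction of the process rather than from any explicit transition density. Recall from the description after~\cite{Wa} that the Warren process on $N\times\Real$ is built level by level: the bottom level $x^{(1)}$ is a single Brownian motion (with variance $t/2$ in our normalization), and given the trajectory of level $n-1$, the level-$n$ particles $x^{(n)}=(x^{(n)}_1,\dots,x^{(n)}_n)$ evolve as Brownian motions reflected/pushed so as to remain interlaced with level $n-1$. The key structural fact is that the evolution of levels $1,\dots,n-1$ does not depend in any way on level $n$ (or higher); only the downward coupling is present. Consequently, the marginal of the full process on the first $n-1$ levels is, as a stochastic process, exactly the Warren process of depth $n-1$. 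First I would make this precise: let $\pi_{n-1}$ denote the projection $(x^{(1)},\dots,x^{(n)})\mapsto(x^{(1)},\dots,x^{(n-1)})$, and argue from the construction that $\pi_{n-1}$ intertwines the depth-$n$ dynamics with the depth-$(n-1)$ dynamics, i.e. the pushforward under $\pi_{n-1}$ of the law of the depth-$n$ process started from $(x^{(1)},\dots,x^{(n)})$ equals the law of the depth-$(n-1)$ process started from $(x^{(1)},\dots,x^{(n-1)})$, independently of $x^{(n)}$.

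Granting that, the proposition is immediate: integrating $P^{(n)}_t(x^{(1)},\dots,x^{(n)};y^{(1)},\dots,y^{(n)})$ over $y^{(n)}\in\Real^n$ is exactly computing the density of $\pi_{n-1}$ applied to the time-$t$ configuration, which by the intertwining equals $P^{(n-1)}_t(x^{(1)},\dots,x^{(n-1)};y^{(1)},\dots,y^{(n-1)})$. One should note that the left-hand side a priori could depend on $x^{(n)}$, and the content of the statement is precisely that it does not; this non-dependence is exactly what the one-directional coupling in Warren's construction gives.

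A second, more hands-on route — useful if one wants to avoid invoking the pathwise construction — is to use Proposition~\ref{prop:wa1} as a black box. That proposition integrates out the \emph{bottom} $n-1$ levels and produces the Karlin--McGregor density $p^{(n)}_t$ on the top level; here we want to integrate out the \emph{top} level. One could combine the two: apply Proposition~\ref{prop:wa1} at depth $n$ to get the top marginal $p^{(n)}_t$, apply it again at depth $n-1$, and use the known branching/Markov-step structure (Baryshnikov's observation, cited in the discussion after Theorem~\ref{thm:main-theorem-1}, that passing from level $n$ to level $n-1$ at fixed time is a Markov step whose kernel is a ratio of Vandermondes times a Karlin--McGregor-type determinant) to check that the two pictures glue consistently after the $dy^{(n)}$ integration. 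I expect the main obstacle to be purely bookkeeping: making the "projection is an intertwining" claim airtight requires either quoting the relevant passage of~\cite{Wa} verbatim or redoing a short piece of the construction, and one must be careful that the pushing interaction between level $n$ and level $n-1$ really is asymmetric (level $n-1$ is autonomous). Once that asymmetry is pinned down, everything else is a one-line integration.
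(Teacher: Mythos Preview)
Your proposal is correct and follows the same approach as the paper: the paper gives no proof beyond the sentence ``by the characterization after (30) in Warren's paper, it is clear that\dots'', and that characterization is precisely the asymmetric, level-by-level construction you invoke (levels $1,\dots,n-1$ evolve autonomously, level $n$ is built on top). Your first route is a faithful elaboration of that one-line justification; the second route via Proposition~\ref{prop:wa1} is unnecessary here but harmless.
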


We shall compute the eigenvalue measure along a particular space-like
path and the reader will see how to generalise this. 
Suppose we want to look at the path $(n, t_1)$, $(n, t_2)$, 
$(n-1, t_2)$, $(n-2, t_2)$, $(n-2, t_3)$ for some fixed $n$ and $0<t_1<t_2<t_3$.
The density of the event that the Warren process, started 
at the origin, takes values $\bar x$, $\bar y$ and $\bar z$ respectively
at times $t_1$, $t_2$ and $t_3$ respectively is by \cite{Wa}
\begin{equation}
\Delta^2(x^{(n)}) e^{-\sum_{i=1}^n  (x^{(n)}_i)^2}
\frac{I^{(n)} (\bar x)}{\Delta(x^{(n)})}
 P^{(n)}_t(\bar x, \bar y)  P^{(n)}_{t'}(\bar y, \bar z) 
\end{equation}
where $t:=t_2-t_1$ and $t':=t_3-t_2$.
To find the distribution on the afforementioned path we need
to integrate out  $x^{(1)}$, \dots, $x^{(n-1)}$,  $y^{(1)}$, \dots, $y^{(n-3)}$,
 $z^{(n-1)}$ and $z^{(n)}$. 

We start by integrating out the unwanted $x$-variables which can be done
by applying Proposition~\ref{prop:wa1} which gives
\begin{equation}
\label{eqn:markov2}
\Delta^2(x^{(n)}) e^{-\sum (x^{(n)})^2}
 p^{(n)}_t(x^{(n)}, y^{(n)})
\frac{I^{(n)}(\bar y)}{\Delta(y^{(n)})}
  P^{(n)}_{t'}(\bar y, \bar z) 
\end{equation}

Observing that 
\[I^{(n)} (\bar y) = \mathbbm{1} \{y^{(n)} \succ y^{(n-1)} \succ y^{(n-2)}\}
I^{(n-2)} (y^{(1)}, \dots, y^{(n-2)})\] and applying Proposition~\ref{prop:wa2}
twice we see that 
\begin{multline*}
\int I(\bar y)  P^{(n)}_{t'}(\bar y, \bar z) \, dz^{(n)}dz^{(n-1)} \\
=
\mathbbm{1} \{y^{(n)} \succ y^{(n-1)} \succ y^{(n-2)}\}
 P^{(n-2)}_{t'}(y^{(1)},\dots,y^{(n-2)}; z^{(1)}, \dots, z^{(n-2)}) 
\end{multline*}

We insert that into~\eqref{eqn:markov2} integrated and then apply 
Proposition~\ref{prop:wa1}
to integrate out the unwanted $y$-variables to get
\begin{multline}
\label{eqn:markov3}
\Delta(x^{(n)}) e^{-\sum (x^{(n)})^2}
\frac{\Delta(x^{(n)})}{\Delta(y^{(n)})}
 p^{(n)}_t(x^{(n)}, y^{(n)})
\mathbbm{1} \{y^{(n)} \succ y^{(n-1)} \succ y^{(n-2)}\}
\times \\
\times
\frac{\Delta(y^{(n-2)})}{\Delta(z^{(n-2)})}
 p^{(n-2)}_{t'}(y^{(n-2)}, z^{(n-2)}) 
\frac{I^{(n-2)} (z^{(1)}, \dots, z^{(n-2)})}{\Delta(z^{(n-2)})}
\end{multline}

It is well known that 
\[\mathbbm{1} \{y^{(n)} \succ y^{(n-1)}  = 
\det[\mathbbm{1} \{y^{(n)}_i >  y^{(n-1)}_j\} ] _{i,j=1}^n\]
if you adopt the convention that $y^{(n-1)}_n = -\infty$.
Then \eqref{eqn:markov3} can be written as a 
product of determinants.
\begin{multline}
\label{eqn:markov4}
\Delta(x^{(n)}) e^{-\sum (x^{(n)})^2}
\det\left[ e^{-(x^{(n)}_i-y^{(n)}_j)^2/t} \right]_{i,j=1}^n
\det[\mathbbm{1} \{y^{(n)}_i >  y^{(n-1)}_j\} ] _{i,j=1}^n
\times \\
\times
\det[\mathbbm{1} \{y^{(n-1)}_i >  y^{(n-2)}_j\} ] _{i,j=1}^{n-1}
\det\left[ e^{-(y^{(n-2)}_i-z^{(n-2)}_j)^2/t'} \right]_{i,j=1}^{n-2} 
\frac{I^{(n-2)} (z^{(1)}, \dots, z^{(n-2)})}{\Delta(z^{(n-2)})}
\end{multline}
The same idea can be applied to any other space-like path. 

An argument for the corresponding statement for
the Dyson Brownian minor process is given in~\cite[Section 4]{FeFr10}. \comment{
$ $Id: kernel.tex,v 1.71 2012/02/13 15:32:58 enord Exp $ $
}

\section{Definitions and computations}
\label{sec:definitions}
Given the theory presented in the last two sections, 
computing the kernel is nothing but a long tedious computation.
It was hard to write, hopefully it isn't too hard to read. 
The computation for the Warren process and the 
Dyson BM process can be done at the same
time with judicious choice of notation.
Let 
\begin{equation}
\label{eqn:heaviside}
H^n(x):=
\begin{cases}
(n-1)!\inv x^{n-1} \1\{x\geq 0\},  & \text{$n=1$, 2, \dots},\\
\delta(x), & n = 0.
\end{cases}
\end{equation}
be the $n$th anti-derivative of the Dirac delta function
and $H:= H^1$ be the Heaviside function.

\subsection{Dyson BM}
\label{sec:dyson}
Define the normalised Hermite polynomials,
\begin{equation}
h_n^*(x)= [\sqrt{\pi} n! 2^n]^{-1/2}   (-1)^{n}(w^*(x))\inv  D^n w^*(x)
\end{equation}
which are orthonormal with respect to the weight
\begin{equation}
w^*(x)=e^{-x^2}.
\end{equation}

The transition density of the Ornstein-Uhlenbeck process
is the well known expression
\begin{equation}
\label{eqn:pstar-def}
p_t^* (x,y)=\frac{\exp(\frac{-(y-q_t^*x)^2}{1-q_t^{*,2}})}{\sqrt{\pi(1-q_t^{*,2}) }}
\end{equation}
where 
\[
q_t^*=e^{-t}.
\]
While we are at it, define $r_t^*:=q_t^*$ and $\sigma^*(t)=1/\sqrt{2}$
for $t>0$, and set $q_t^{*,n}=(q_t^{*})^n$.

\subsection{Warren process}
\label{sec:warren}
For $t>0$, let
\begin{equation}
h_n^{(t)}(x)= [\sqrt{\pi} n! 2^nt^{-n}]^{-1/2}   
(-1)^{n}(w^{(t)}(x))\inv  D^n w^{(t)}(x).
\end{equation}
These are orthogonal with respect to the weight
\begin{equation}
w^{(t)}(x)=e^{-x^2/t}
\end{equation} 
and related to the Hermite polynomials above by 
$h^{(t)} (x)  = h^*(x/\sqrt t)$.

The transition density of Brownian motion with variance
$t/2$  is 
\begin{equation}
\label{eqn:p-def}
p_t (x,y)=\frac{1}{\sqrt{\pi t} }e^{-(y-x)^2/t}.
\end{equation}
Define 
\begin{equation}
q^{(t)}_s=\sqrt{ \frac t{s+t} }
\end{equation}
We are going to set $r_t \equiv 1$ 
and $\sigma(t)=\sqrt{t/2}$.

We need the following convolutions in our computations later.
\begin{lemma}
For $n=0$, $1$, $2$, \dots, and $t>0$,
\begin{align}
\label{eqn:time-step}
\int_\Real h^{(t)}_n(x) w^{(t)}(x) p_s(x,y)\,dx&=
q_s^{(t),n} h^{(t+s)}_n(y)w^{(t+s)}(y),\\
\label{eqn:space-step}
\int_\Real h_{n+1}^{(t)}(x) w^{(t)}(x) H(x-y) \, dx&
=\sigma(t)(n+1)^{-1/2} h_{n}^{(t)}(y) w^{(t)}(y),\\
\label{eqn:space-step-back}
\int_{\Real}  H^{n}(x-y)   H(y-z) \, dy&
=  H^{n+1}(x-z)\\
\label{eqn:time-semigroup}
\int_{\Real} p_t(x,y) p_s(y,z) \, dy &= 
p_{t+s}(x,z)\\
\label{eqn:time-step-back}
\int_\Real p_t(x,y)H^n(y-z)\,dy&= 
r_t^{n}  \int_\Real H^n(x-y)p_t(y,z)\,dy\\
\label{eqn:orthogonality}
\int_\Real  h^{(t)}_n(x)h^{(t)}_m(x) w^{(t)}(x)&=
\sqrt2 \sigma(t) \delta_{nm}\\
\label{eqn:space-shift}
p_t (x + y, z) &= p_t(x, z - r_t y), \\
\label{eqn:q-composition}
q^{(t_1)}_{t_2-t_1} q^{(t_2)}_{t_3-t_2} &= q^{(t_1)}_{t_3-t_1} \\
\label{eqn:r-composition}
r_t r_s &= r_{t+s} \\
\label{eqn:qr-composition}
\frac{q^{(s)} _{t-s} }{\sigma(s)} &= \frac{r_{t-s}}{\sigma(t)}
\end{align}
All of this is also true for the stared functions.
The coefficient of $x^n$ in $h_n^{(t)}$ is
\[
a_n:=\frac1{ \sigma^n(t)\sqrt{n!\sqrt{\pi}}}
\]
and that is true for $h_n^{*}$ with $\sigma$ replaced
by $\sigma^*$.
\end{lemma}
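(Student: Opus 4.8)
The plan is to verify each identity in turn; almost all of them follow either from the generating-function identity for Hermite polynomials or from elementary Gaussian convolution algebra, so the work is to organize them so that nothing is repeated. I would begin by recording the generating function
\begin{equation*}
\sum_{n=0}^\infty \frac{z^n}{n!} (-1)^n D^n w^*(x) = w^*(x) e^{2xz - z^2},
\end{equation*}
equivalently $\sum_n a_n^* \, h_n^*(x)\, \sigma^{*,n} z^n = e^{2xz - z^2}$ with $a_n^*$ the stated leading coefficient; the $h^{(t)}$ version is obtained by the scaling $h^{(t)}(x) = h^*(x/\sqrt t)$, $w^{(t)}(x) = w^*(x/\sqrt t)$, so it suffices to prove everything for the starred objects and then rescale. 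The purely ``bookkeeping'' identities \eqref{eqn:space-step-back}, \eqref{eqn:time-semigroup}, \eqref{eqn:space-shift}, \eqref{eqn:q-composition}, \eqref{eqn:r-composition}, \eqref{eqn:qr-composition} and the leading-coefficient formula require no Hermite input at all: \eqref{eqn:space-step-back} is the beta-integral $\int_{z}^{x}\frac{(x-y)^{n-1}}{(n-1)!}\,dy = \frac{(x-z)^n}{n!}$ (with the $n=0$ case being the defining property of $\delta$), \eqref{eqn:time-semigroup} is the Chapman--Kolmogorov relation for the heat kernel $p_t$, \eqref{eqn:space-shift} is the trivial change of variables in $p_t(x+y,z) = \frac{1}{\sqrt{\pi t}} e^{-(z-x-y)^2/t}$ using $r_t\equiv 1$ (resp.\ $r_t^* = q_t^*$, where one checks $p_t^*(x+y,z) = p_t^*(x, z-q_t^* y)$ directly from \eqref{eqn:pstar-def}), and \eqref{eqn:q-composition}, \eqref{eqn:r-composition}, \eqref{eqn:qr-composition} are one-line algebraic checks from the explicit formulas $q^{(t)}_s = \sqrt{t/(s+t)}$, $r_t = 1$, $\sigma(t) = \sqrt{t/2}$ (resp.\ $q^*_t = e^{-t}$, $r^*_t = e^{-t}$, $\sigma^* = 1/\sqrt2$); e.g.\ $r^*_t r^*_s = e^{-t}e^{-s} = e^{-(t+s)} = r^*_{t+s}$, and $q^{(s)}_{t-s}/\sigma(s) = \sqrt{s/t}\cdot\sqrt{2/s} = \sqrt{2/t} = r_{t-s}/\sigma(t)$.

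For the three genuinely ``Hermite'' identities I would argue as follows. For \eqref{eqn:time-step}, multiply the starred generating function evaluated at $x$ by $w^*(x) p_s^*(x,y)$ and integrate in $x$: the integrand is Gaussian in $x$, and completing the square produces (after the algebra) $w^{*}_{\mathrm{eff}}(y)\, e^{2yz q^{*}_s - z^2 q^{*,2}_s}$ up to constants — matching the right-hand side generating function in the variable $z q^*_s$ — which, upon comparing coefficients of $z^n$, yields $\int h_n^* w^* p_s^* = q^{*,n}_s h_n^{*} w^{*}$ with the correct $w^{(t)}$-to-$w^{(t+s)}$ passage once rescaled; alternatively one invokes that $p_s^*$ is the Ornstein--Uhlenbeck semigroup whose eigenfunctions are exactly the Hermite functions with eigenvalues $q^{*,n}_s$, which is the conceptual content. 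For \eqref{eqn:space-step}, use $D\big(w^*(x) h_n^*(x)\big)$ is, up to the normalising constants, $-w^*(x) h^*_{n+1}(x)$ times a factor (this is the Rodrigues relation $(-1)^{n+1}D^{n+1}w^* = -D\big((-1)^n D^n w^*\big)$ together with the definition of $h^*_n$), so that $\int_y^\infty h^*_{n+1}(x) w^*(x)\,dx = \big[\text{const}\big]\int_y^\infty D(h_n^* w^*)\,dx = \big[\text{const}\big]\, h_n^*(y) w^*(y)$; tracking the constant through the definition of $h_n^*$ gives the factor $\sigma^*(n+1)^{-1/2}$, and rescaling gives the $h^{(t)}$ statement. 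The identity \eqref{eqn:time-step-back} is the self-adjointness/reversibility statement: one writes $\int p_t^*(x,y) H^n(y-z)\,dy$, substitutes $H^n(w) = \tfrac{1}{(n-1)!} w^{n-1}\1\{w\ge 0\}$, and shows by the change of variables dictated by \eqref{eqn:space-shift} (namely $p_t^*(x,y)\,dy \leftrightarrow p_t^*(\cdot,\cdot)$ with the $q_t^*$-dilation) that this equals $q^{*,n}_t \int H^n(x-y) p_t^*(y,z)\,dy$ — more conceptually, $p^*_t$ is reversible with respect to its Gaussian invariant measure and $H^n$ transforms homogeneously of degree $n$ under the dilation hidden in $p_t^*$, so the factor $r_t^{*,n} = q_t^{*,n}$ emerges. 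Finally \eqref{eqn:orthogonality} is just the stated orthonormality of $h^*_n$ with respect to $w^*$ (so the starred version has $\sqrt2\,\sigma^* = \sqrt2\cdot\tfrac1{\sqrt2} = 1$, i.e.\ the $h^*_n$ are genuinely orthonormal, consistent with ``normalised''), rescaled to weight $w^{(t)}$ which introduces the Jacobian $\sqrt t = \sqrt 2\,\sigma(t)$.

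The main obstacle is purely organizational: keeping every normalising constant $[\sqrt\pi\, n!\, 2^n]^{-1/2}$, every factor $\sigma^*$ versus $\sigma(t)$, and every power of $q$ straight across the starred/rescaled dictionary, so that the precise numerical coefficients on the right-hand sides of \eqref{eqn:time-step}, \eqref{eqn:space-step}, \eqref{eqn:time-step-back}, \eqref{eqn:orthogonality} come out as written; there is no conceptual difficulty, but a sign or a factor of $2$ lost here would propagate into the kernel computation of the next section. I would therefore do the starred case once, cleanly, with the generating function as the single source of truth for \eqref{eqn:time-step} and \eqref{eqn:space-step}, and then state explicitly the substitutions $x\mapsto x/\sqrt t$, $h^{(t)}_n = h^*_n(\cdot/\sqrt t)$, $\sigma^* \mapsto \sigma(t)$, $q^* \mapsto q^{(t)}$ that turn each starred identity into its Warren counterpart, leaving the reader to substitute.
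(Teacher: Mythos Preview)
Your proposal is correct and, in spirit, matches the paper's own ``proof,'' which consists of the single sentence ``These are proved by explicit elementary computation.'' Your plan supplies exactly the organization and the specific tools (generating function for \eqref{eqn:time-step}, Rodrigues relation for \eqref{eqn:space-step}, homogeneity of $H^n$ plus a change of variables for \eqref{eqn:time-step-back}, rescaling for the passage from starred to $h^{(t)}$) that the paper leaves entirely to the reader, so there is nothing to compare beyond noting that you have filled in what the authors omitted.
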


These are proved by explicit elementary computation. 
An important point here is that replacing $h$, $q$, $r$, 
$w$, $p$ and $\sigma$ with the stared versions 
these equations still hold. 
By this intelligent choice of notation we can do the computation concerning 
the Ornstein-Uhlenbeck and the Warren process at the same time. 
Furthermore,
\begin{lemma}
\label{thm:step-expansion}
\begin{multline}
\int_\Real H^n(x-y) p_{t-s} (y, z) \,dy = 
\sum_{k=0}^{n-1} 
\frac{h_k^{(s)}(x)(\sigma(t))^k q_{t-s}^{(s), k}}{r_{t-s}^n\sigma(s)\sqrt{2k!}\pi^{\frac14}}
\int_\Real  w^{(t)}(y) H^{n-k} (y-z) \, dy\\
+\frac{(\sigma(t))^{n} }{\sqrt2 \sigma(s) r^n_{t-s} }
\sum_{k=n}^{\infty} \sqrt\frac{(k-n)!}{k!} q_{t-s}^{(s), k} 
 h_k^{(s)}(x)  h_{k-n}^{(t)} (z) w^{(t)} (z) .
\end{multline}
\end{lemma}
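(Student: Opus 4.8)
The plan is to expand the left-hand side, regarded as a function of $x$ with $z$ held fixed, in the orthonormal Hermite system $\{h_k^{(s)}\}_{k\geq0}$ attached to the weight $w^{(s)}$. Since $y\mapsto H^n(x-y)$ grows at most polynomially and $p_{t-s}$ is Gaussian, the function $x\mapsto\int_\Real H^n(x-y)p_{t-s}(y,z)\,dy$ lies in $L^2(w^{(s)})$, so by completeness it equals $\sum_{k\geq0}c_k(z)\,h_k^{(s)}(x)$, where by the orthogonality relation \eqref{eqn:orthogonality}
\[
c_k(z)=\frac{1}{\sqrt2\,\sigma(s)}\int_\Real h_k^{(s)}(x)\,w^{(s)}(x)\Bigl(\int_\Real H^n(x-y)\,p_{t-s}(y,z)\,dy\Bigr)dx .
\]
I would then compute each $c_k(z)$ by Fubini, performing the $x$-integral first, and check that the resulting coefficients coincide with those read off from the claimed right-hand side.

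For the inner $x$-integral $\int_\Real h_k^{(s)}(x)w^{(s)}(x)H^n(x-y)\,dx$ I would iterate the space-step identity \eqref{eqn:space-step}, which lowers the Hermite index by one and produces a factor $\sigma(s)k^{-1/2}$, using \eqref{eqn:space-step-back} to split $H^n$ into a convolution of copies of $H$ as needed. If $k\geq n$ this runs to completion and gives $\sigma(s)^n\sqrt{(k-n)!/k!}\;h_{k-n}^{(s)}(y)\,w^{(s)}(y)$; if $k<n$ it terminates after $k$ steps at $h_0^{(s)}=\pi^{-1/4}$ and leaves $n-k$ undigested Heaviside factors, yielding $\sigma(s)^k\pi^{-1/4}(k!)^{-1/2}\int_\Real w^{(s)}(u)H^{n-k}(u-y)\,du$.

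Next I would integrate the result against $p_{t-s}(\cdot,z)$. When $k\geq n$ this is $\int_\Real h_{k-n}^{(s)}(y)w^{(s)}(y)p_{t-s}(y,z)\,dy$, which the time-step identity \eqref{eqn:time-step} evaluates to $q_{t-s}^{(s),\,k-n}h_{k-n}^{(t)}(z)w^{(t)}(z)$. When $k<n$ I am left with a double integral; flipping $H^{n-k}$ across $p_{t-s}$ via \eqref{eqn:time-step-back} and then applying \eqref{eqn:time-step} with index $0$ (which reads $\int_\Real w^{(s)}(u)p_{t-s}(u,y)\,du=w^{(t)}(y)$) collapses it to $r_{t-s}^{\,k-n}\int_\Real w^{(t)}(y)H^{n-k}(y-z)\,dy$. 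Collecting constants and using the multiplicativity $q^{(s),a+b}_{t-s}=q^{(s),a}_{t-s}q^{(s),b}_{t-s}$ then reproduces exactly the two sums in the statement.

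The one non-routine input, which I expect to be the heart of the bookkeeping, is the relation \eqref{eqn:qr-composition}, namely $\sigma(t)q^{(s)}_{t-s}=\sigma(s)r_{t-s}$: in both regimes the final comparison of my coefficient against the claimed one reduces, after cancellation, precisely to this identity raised to a power ($(\sigma(s)r_{t-s})^n=(\sigma(t)q^{(s)}_{t-s})^n$ for $k\geq n$, and $(\sigma(s)r_{t-s})^k=(\sigma(t)q^{(s)}_{t-s})^k$ for $k<n$). Finally, since every identity invoked holds verbatim for the starred objects, the identical computation settles the Ornstein--Uhlenbeck case; what remains are only the standard justifications of the Fubini interchanges and of termwise summation of the Hermite series, which present no difficulty here.
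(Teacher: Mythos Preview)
Your proposal is correct and follows the same overall strategy as the paper: expand the left-hand side in the orthonormal system $\{h_k^{(s)}\}$ and compute the coefficients $c_k(z)$ case by case. The difference lies only in the order in which you apply the convolution identities. You first perform the space-steps \eqref{eqn:space-step} at time $s$ (producing $\sigma(s)$ factors), then push through $p_{t-s}$ via \eqref{eqn:time-step}, and for $k<n$ you need an additional swap \eqref{eqn:time-step-back}; matching the resulting constants to the statement then hinges, as you correctly identify, on \eqref{eqn:qr-composition}. The paper instead begins by applying \eqref{eqn:time-step-back} once to move $H^n$ past $p_{t-s}$ (this produces the global $r_{t-s}^{-n}$), then uses \eqref{eqn:time-step} to pass from time $s$ to time $t$ (producing $q_{t-s}^{(s),k}$), and only afterwards performs the space-steps at time $t$ (producing $\sigma(t)$ factors). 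In that order the constants in the statement appear directly and \eqref{eqn:qr-composition} is never invoked. Both routes are valid; the paper's ordering is slightly tidier bookkeeping, while yours makes explicit why the identity \eqref{eqn:qr-composition} is the compatibility that ties the two pictures together.
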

\comment{Checked again 100521.}
\begin{proof}
By orthogonality 
\begin{equation}
f(x, z) = \int H^n(x-y) p_{t-s} (y, z) \,dy 
\end{equation}
can be written as 
\begin{equation}
f(x, z) = \sum_{k=0}^\infty c_k(z) h_k^{(s)}(x) 
\end{equation}
for suitable coefficients 
\begin{align}
(\sqrt2 \sigma(s) ) c_k(z) &= \int_\Real f(x,z) h_k^{(s)}(x) w^{(s)}(x) \,dx=\\
&= \iint_{\Real\times \Real} 
 H^n(x-y) p_{t-s} (y, z) h_k^{(s)}(x) w^{(s)}(x) \, dx\,dy.
\intertext{Let's start with the case $k\geq n$. 
Apply~\eqref{eqn:time-step-back}.}
&=\frac{1}{r_{t-s}^n} \iint _{\Real\times \Real} 
h_k^{(s)}(x) w^{(s)}(x)p_{t-s} (x, y)  H^n(y-z) \, dx dy
\intertext{Apply (\ref{eqn:time-step}). }
\label{eqn:coefficients}
&=\frac{q_{t-s}^{(s), k}}{r_{t-s}^n}  \int _{\Real} 
h_k^{(t)}(y) w^{(t)}(y)  H^n(y-z) \, dy
\intertext{
Apply~\eqref{eqn:space-step} and~\eqref{eqn:space-step-back} 
$n$ times to get }
&=\frac{q_{t-s}^{(s), k}}{r_{t-s}^n} (\sigma(t))^n \sqrt\frac{(k-n)!}{k!}
  h_{k-n}^{(t)}(z) w^{(t)}(z) 
\intertext{ Now suppose instead that $k<n$. Then
everything up to~(\ref{eqn:coefficients}) works the same. 
Apply (\ref{eqn:space-step}) $k$ times. }
&=\frac{q_{t-s}^{(s), k}}{r_{t-s}^n}\frac{(\sigma(t))^k}{\sqrt{k!}}
\int_\Real h_0^{(t)}(y) w^{(t)}(y) H^{n-k} (y-z) \, dy\\
\intertext{Remember that $h_0(y)\equiv \pi^{-\frac14}$.}
&=\frac{q_{t-s}^{(s), k}}{r_{t-s}^n}\frac{(\sigma(t))^k}{\sqrt{k!}\pi^{\frac14}}
\int_\Real  w^{(t)}(y) H^{n-k} (y-z) \, dy
\end{align}
\end{proof}

\subsection{Integral representations.}
\label{sec:hermite-representations}
With the normalisations above the classical integral
representations for the Hermite polynomials are
\begin{equation}
\label{eqn:hermite-repressentation1}
h^{(t)}_n (x) = \frac{\pi^{\frac14} e^{x^2/t} (2t)^{n/2} }{i\pi \sqrt{n!}}
\int_\Gamma v^n e^{v^2-2vx/\sqrt{t} } \,  dv
\end{equation}
and
\begin{equation}
\label{eqn:hermite-repressentation2}
h^{(t)}_n (x) = \frac{(t/2)^{n/2} \sqrt{n!}}{\pi^{\frac14}} 
\frac{1}{2\pi i } 
\int_\gamma u^{-n-1} e^{-u^2 + 2ux/\sqrt{t} } \,  du
\end{equation}
with contours of integration as in Figure \ref{fig:contours}.
The starred Hermite polynomials are given,  
for $n=0$, 1, \dots, by  $h_n^{*} \equiv  h^{(1)}_n$. 

As a sort of generalisation of~\eqref{eqn:hermite-repressentation1}
it can be shown~\cite{No09}
that  for $n=1$, 2, \dots,
\begin{equation}
\frac{1}{\pi i } 
\int_\Gamma v^{-n} e^{v^2-2vx/\sqrt{t} } \, dv
= \frac{ 2^n}{\sqrt{\pi} t^{n/2} }
 \int_\Real H^n(y-x) e^{-y^2/t} \, dy.
\end{equation}

\begin{figure}
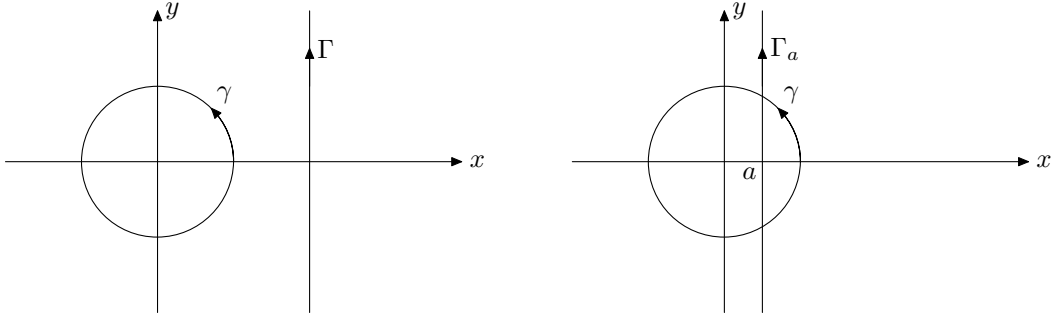

\includegraphics{fig0.mps} \hspace{2em}
\includegraphics{fig1.mps} 
\caption{Contours of integration.}
\label{fig:contours}
\end{figure}

\section{The Kernel}
\label{sec:kernel}
\begin{figure}
\includegraphics[width=15cm]{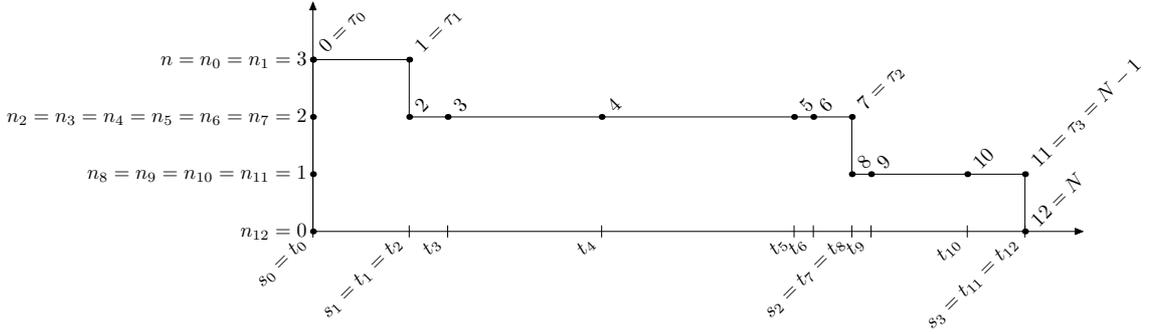}
\caption{Times and levels must follow a space-like path.
This means that this curve must not take steps upward. }
\label{fig:notation}
\end{figure}

The setup now is the following. 
Pick $N$ times and levels $(t_0,n_0)$, \dots, $(t_{N-1},n_{N-1})$ following
a space-like path. 
This means that $0<t_0\leq t_1\leq \dots \leq t_{N-1}$
and $n_0\geq n_1\geq \dots \geq n_{N-1}$.
Without loss of generality we can take $n_{N-1}=1$
and $n_m-n_{m+1}\in \{0,1\}$ for all $m=0$, \dots, $N-2$.
That is, we end at level 1 and only drop one level at a time.
For the sake of notation let $n_N=0$ and $t_N = t_{N-1}$.
Denote by $x^{(k)}=( x_1^{(k)}\geq x_2^{(k)}\geq \dots \geq x_{n_k}^{(k)})$
the $n_k$ eigenvalues at time $t_k$ and level $n_k$.
We shall say that $m\in \Space $ if 
the $m$th step is a down step, i.e. $n_m = n_{m+1} + 1$.
For $k=1$, \dots, $n_0$, let $\tau_k$ and $s_k$ be the 
position and time, respectively, of the $k$th down step. 
Thus $\tau_k\in \Space$ by definition and $n_{\tau_k}=n+1-k$. 
The time of the $k$th down step is $s_k=t_{\tau_k} = s_{n_0+1-n_{\tau_k}}$.
Also let $\tau_0 = 0$ and $s_0= t_0$.
If the $m$th step is a time step, i.e. $n_m = n_{m+1}$,
then we shall write $m\in\Time$.
That's a lot of notation, hopefully Figure \ref{fig:notation}
should make this clearer.

Let 
\begin{align}
\phi_k(x)&= h^{(t_0)}_{k-1} (x) w^{(t_0)}(x) & &\text{for $k=1,\dots, n_0$}\\
W_m(x,y)&= \begin{cases}
p_{t_{m+1}-t_{m}}(x,y) & m\in\Time\\
H(x-y) & m\in\Space
\end{cases}
&&\text{for $m=0$, \dots, $N-1$.}
\end{align}

A full configuration of eigenvalues
is the $N$-tuple 
\[
\bar x = (x^{(0)}, x^{(1)}, \dots, x^{(N-1)})
\]
where $x^{(m)} \in \Real^{n_m}$ for $m=0$, \dots, $N-1$.  
We adopt the notation that $x^{(m+1)}_{n_m+1}=u$ for some
large negative real number $u$ if $m\in\Space$.  
In particular $x^{N}_1=u$ since, by definition $N-1\in\Space$. 
Those are the positions where we step down a level, 
that is, we lose an eigenvalue. One way to think of this is
that that particle jumps away to some position  $u$ 
which is close to $-\infty$. 
The weight or probability density of configuration $\bar x$ is then, 
by the Markovness along space-like paths discussed in 
Section~\ref{sec:markov}, given by the following product
\begin{equation}
\label{eq:weight}
p(\bar x ) = Z\inv \det [ \phi_k(x^{(0)}_l) ]_{k,l=1}^{n_0}
\prod_{m=0}^{N-1}\det [ W_m(x^{(m)}_k,x^{(m+1)}_l )]_{k,l=1}^{n_m},
\end{equation}
where we adopt the notation that $x^{(m+1)}_{n_m+1}=u$ for some
large negative real number $u$ if ${n_m-n_{m+1}}=1$, for $m=1$, \dots, 
$N$. Also $x^{N}_1=u$. 

Let the magic begin.
Define the block matrix 
\begin{equation}
L=\begin{bmatrix}
0 & \Phi & 0 & 0 & \dots & 0 & 0\\
F_0 & 0 & -W_0 & 0 & \dots & 0 & 0\\
F_1 & 0 & 0 & -W_1 & \dots & 0 & 0\\
\vdots\\
F_{N-2} & 0 & 0 & 0 & \dots & -W_{N-2}&0\\
F_{N-1} & 0 & 0 & 0 & \dots & 0 & -W_{N-1}\\
F_{N} & 0 & 0 & 0 & \dots & 0 & 0
\end{bmatrix}.
\end{equation}
The $F_{N}$ block in the above 
block matrix has zero rows but let's keep it notation.
The $(W_n)_{n=0}^{N-1}$ are defined above.
For $l=1$, \dots, $n_0$,
\begin{align}
\label{eqn:phi-zero-definition}
[\Phi]_{l,x} &= \phi_{n_0+1-l}(x) = h_{n_0-l}^{(t_0)}(x) w^{(t_0)}(x),\\
\label{eqn:f-definition}
[F_k]_{x,l}&=\begin{cases}
H(x-u), & k=\tau_{n_0-l+1},\\
0, & \text{otherwise.}
\end{cases}
\end{align}
The measure in~(\ref{eq:weight}), being similar to
that in~\eqref{eqn:measure},  can then be represented 
as in~(\ref{eqn:L-measure}) with the above $L$-matrix.
Introduce $W_{[k,l)}$ as in (\ref{eq:W}).
By the general theory of these $L$-ensembles, we
need to compute 
\begin{equation}
\label{eqn:linear-algebra-kernel}
K= \1 -  D\inv + D\inv C M\inv B D\inv
\end{equation}
for some invertible  matrix $R$. 
Here, $D$ is as in (\ref{eqn:dinv}), $M=BD\inv C$,
\begin{align}
\label{eqn:b}
B&=\begin{bmatrix}\Phi & 0 & 0 & \dots & 0\end{bmatrix},\\
C&=\begin{bmatrix}
F_0 \\
F_1 \\
\vdots \\
F_{N-1}
\end{bmatrix}.
\end{align}
The scheme things now is to analyse each of the different
components of~(\ref{eqn:linear-algebra-kernel}),
namely $BD\inv$, $D\inv C $, $(M)\inv$ and the upper
triangular matrix $1-D\inv$.

First of all, 
$$BD\inv  =
\begin{bmatrix}
  \Phi &
  \Phi W_{[0,1)}&
  \Phi W_{[0,2)}&
\dots &
  \Phi W_{[0,N)}
\end{bmatrix}
$$
which is an $n_0\times |\M|$ matrix.
We'll call the $k$th block of this $\Phi_k$. 
That is, $\Phi_0 := \Phi$ and 
\begin{equation}
  \label{eqn:phi-definition}
  \Phi_k:=\Phi W_{[0,k)},  
\end{equation}
for $k = 1$, \dots, $N$,  and an explicit expression for it will
be  given in Lemma \ref{thm:bd-inv}. 
Next, let's look at 
\begin{equation}
  \label{eqn:dinv-c}
D\inv C= 
\begin{bmatrix}
F_0 +  W_{[0,1)} F_1 +  W_{[0,2)}F_2 + W_{[0,3)}F_3 + \cdots + W_{[0,N)}F_N \\
 F_1 +  W_{[1,2)}F_2 + W_{[1,3)}F_3 + \cdots + W_{[1,N)}F_{N} \\
\vdots\\
F_{N}
\end{bmatrix}
=:
\begin{bmatrix}
  \bar\Psi_0 \\
  \bar\Psi_1 \\
  \vdots\\
  \bar\Psi_N
\end{bmatrix}
\end{equation}
where, for $k=0$, \dots, $N$,
\begin{equation}
  \label{eqn:psi-bar-definition}
  \bar\Psi_k = F_k + \sum_{j = k+1} ^N W_{[k, j)} F_j.  
\end{equation}
As mentioned we will have to do column operations on this,
which are represented by the $n_0\times n_0$ matrix $R_k$, 
see~\eqref{eqn:EM-correlation-function}. 
We will choose $R_k$ in such a way that
$MR_k$, for $k=0$, \dots, $N-1$, 
 is asymptotically the identity matrix as $u\rightarrow -\infty$.
Thus for $k=0$, \dots, $N$ let 
\begin{equation}
  \label{eqn:psi-definition}
  \Psi_k := \bar\Psi_k R_k   . 
\end{equation}
These will be explicitly computed in Lemmas~\ref{thm:psi-bar} 
and~\ref{thm:psi}.

Block $(k,k') $ of the kernel in~\eqref{eqn:linear-algebra-kernel}
can with this notation be written, 
remembering~\eqref{eqn:dinv}, as
\begin{align}
[K_{k,k'}]_{x,y} &= 
-W_{[k,k')} + 
\sum_{i,j = 1} ^{n_0}  
[\bar \Psi_k]_{x,i} [ M\inv ]_{i,j} [\Phi_{k'}]_{j,y} \\
\label{eqn:psi-phi-sum}
&= -W_{[k,k')} +  \sum_{i,j = 1} ^{n_0}  
[\Psi_k]_{x,i} [ (MR_k)\inv ]_{i,j} [\Phi_{k'}]_{j,y}. 
\end{align}
Note that, as we shall see in Lemma~\ref{thm:psi}, 
only the columns 1, \dots, $n_k$ of $\Psi_k$ are
non-zero. So we need only compute rows 
  1, \dots, $n_k$ of $(MR_k)\inv$. 
As it happens we never need to explicitly write down
what $R_k$ is but it can in principle be extracted from the 
proof of Lemma~\ref{thm:psi}. 

Recall from~\eqref{eqn:linear-algebra-kernel}
that $M = BD\inv C$, but what we really need
is $MR_k$  for $k=0$, \dots, $N-1$. Multiply~\eqref{eqn:b} 
with~\eqref{eqn:dinv-c} to compute
\begin{equation}
MR_k = BD\inv CR_k = \Phi F_0 R_k
 +  \sum_{j = 1} ^N \Phi W_{[0, j)} F_j R_k . 
\end{equation}  
For $k = 0 $ that specialises to $MR_0 = \Phi_0\Psi_0$.
For $k>0$ it will later turn out that we 
only need the first $n_k$ columns of $MR_k$. 
That allows us to remove those terms that only contribute
to columns $n_k + 1$ to $n_0$, which by~\eqref{eqn:f-definition}
is those that involve $F_0$, \dots, $F_{k-1}$. 
We shall denote by $\simeq$ the operation of 
removing the unnecessary columns.
\begin{align}
MR_k &\simeq
 \sum_{j = k} ^N \Phi W_{[0, j)} F_j R_k  \\
&= \Phi W_{[0,k)} ( F_k + \sum_{j = k+1} ^N  W_{[k, j)} F_j) R_k\\
&= \Phi_k \Psi_k
\end{align}  

Another way of saying that is 
\begin{equation}
\label{eqn:mr}
[MR_k] _{i,j} = \int_\Real [\Phi_k]_{i,x} [\Psi_k]_{x,j} dx.
\end{equation}
for $i$, $j=1$, \dots, $n_k$. 

\subsection{Computing $1-D\inv$.}
From  (\ref{eqn:dinv}) it is clear that this is a 
matrix of size $N |\M| \times N |\M|$.
Furthermore, the $(k, l)$ block of this  is identically $0$ 
if $l\leq k$ and otherwise $-W_{[k, l)}$. 
Recall from~\eqref{eq:W} the definition of $W_{[k, l)}$.
For notation introduce a function $S:\{0,\dots,N-1\} \rightarrow \Real$
defined by 
\begin{equation}
  S(k): = \prod_{l=1}^{n_k} r_{s_{n_0 + 1 - l} - s_0} .
\end{equation}
\begin{lemma}
\label{thm:forward-transition}
\begin{equation}
\label{eqn:forward-transition}
[W_{[k, k')}]_{x,z} =
(r_{t_{k}-t_0})^{n_{k'}-n_{k}}
\frac{S(k)}{S(k')}
\int_\Real H^{n_k-n_{k'}} (x-y) p_{t_{k'} - t_k} (y, z) \,dy
\end{equation}
for $k<k'$ and $[W_{[k, k')}]_{x,z}=0 $ otherwise.
\end{lemma}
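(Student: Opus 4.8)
The plan is to prove the formula by induction on $k'-k$, using the fact that every factor $W_m$ in $W_{[k,k')}=W_kW_{k+1}\cdots W_{k'-1}$ is either a propagator $p_{t_{m+1}-t_m}$ (when $m\in\Time$) or the Heaviside kernel $H=H^1$ (when $m\in\Space$, in which case $t_m=t_{m+1}$). The idea is to drag every $p$-factor to the right past all the $H$-factors by \eqref{eqn:time-step-back}; then the $H$-factors coalesce by \eqref{eqn:space-step-back} into a single $H^{\,n_k-n_{k'}}$ (there being exactly $n_k-n_{k'}$ down steps at indices in $[k,k')$, since $(n_m)$ is non-increasing and changes by at most one per step), and the $p$-factors coalesce by \eqref{eqn:time-semigroup} into $p_{t_{k'}-t_k}$ (space steps contributing zero elapsed time). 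The scalar $r$-factors produced along the way are collected using \eqref{eqn:r-composition}. For $k\ge k'$ the identity $W_{[k,k')}=0$ is just the definition \eqref{eq:W}.

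For the inductive step write $W_{[k,k')}=W_{[k,k'-1)}W_{k'-1}$ and insert the inductive formula for $W_{[k,k'-1)}$. If $k'-1\in\Time$ one convolves on the right with $p_{t_{k'}-t_{k'-1}}$ and applies \eqref{eqn:time-semigroup}; since $n_{k'}=n_{k'-1}$, hence $S(k')=S(k'-1)$, neither the Heaviside power nor the prefactor changes and the claim follows. If $k'-1\in\Space$ one convolves with $H$; by \eqref{eqn:time-step-back} with $n=1$ the factor $p_{t_{k'-1}-t_k}$ moves past the new $H$ at the cost of $r_{t_{k'-1}-t_k}$, then \eqref{eqn:space-step-back} raises the Heaviside power from $n_k-n_{k'-1}$ to $n_k-n_{k'-1}+1=n_k-n_{k'}$, while $t_{k'}=t_{k'-1}$ keeps the propagator's time at $t_{k'}-t_k$. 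The new prefactor is $r_{t_{k'-1}-t_k}(r_{t_k-t_0})^{n_{k'-1}-n_k}S(k)/S(k'-1)$; using $r_{t_{k'-1}-t_k}\,r_{t_k-t_0}=r_{t_{k'-1}-t_0}$ from \eqref{eqn:r-composition} together with the identification $S(k'-1)/S(k')=r_{t_{k'-1}-t_0}$ (the step $k'-1$ being precisely the down step leaving level $n_{k'-1}$, so $s_{n_0+1-n_{k'-1}}=t_{k'-1}$ and $s_0=t_0$), this collapses to $(r_{t_k-t_0})^{n_{k'}-n_k}S(k)/S(k')$, as required. The base case $k'=k+1$ is the one-step version of exactly these two computations.

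The only genuine bookkeeping hurdle I anticipate is matching the accumulated scalar with $(r_{t_k-t_0})^{n_{k'}-n_k}S(k)/S(k')$, and the cleanest way to present it is non-inductively: when the single $p$-factor is dragged past the down step at an index $m\in[k,k')$ it picks up $r_{t_m-t_k}=r_{t_m-t_0}/r_{t_k-t_0}$ (using \eqref{eqn:r-composition} and invertibility of $r$, valid for $r^*_t=e^{-t}$ and trivial for $r_t\equiv 1$); since the down steps at indices in $[k,k')$ are exactly those dropping the level from $\ell$ to $\ell-1$ for $\ell=n_{k'}+1,\dots,n_k$, with respective times $s_{n_0+1-\ell}$, their combined contribution is $(r_{t_k-t_0})^{-(n_k-n_{k'})}\prod_{\ell=n_{k'}+1}^{n_k}r_{s_{n_0+1-\ell}-s_0}=(r_{t_k-t_0})^{n_{k'}-n_k}S(k)/S(k')$, directly from the definition of $S$. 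One minor point to flag is the degenerate case where all steps in $[k,k')$ are space steps: then $t_k=t_{k'}$ and $p_{t_{k'}-t_k}=p_0$ is to be read as $\delta$, for which \eqref{eqn:time-semigroup} and \eqref{eqn:time-step-back} hold trivially and the formula reduces to $W_{[k,k')}=H^{\,n_k-n_{k'}}$. All the convolutions involved are absolutely convergent, which (as already remarked in the text) causes no difficulty.
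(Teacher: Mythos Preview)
Your proposal is correct and follows essentially the same inductive argument as the paper: split off the last factor $W_{k'-1}$, treat the two cases $k'-1\in\Time$ and $k'-1\in\Space$, and in the latter use \eqref{eqn:time-step-back} followed by \eqref{eqn:space-step-back}, with \eqref{eqn:r-composition} to rearrange the scalar. Your handling of the prefactor---in particular the identity $S(k'-1)/S(k')=r_{t_{k'-1}-t_0}$ when $k'-1\in\Space$ and the alternative direct computation via $\prod_{\ell=n_{k'}+1}^{n_k}r_{s_{n_0+1-\ell}-s_0}=S(k)/S(k')$---is more explicit than the paper's, and your remark that $p_0$ should be read as $\delta$ in the pure-space-step case is a worthwhile clarification.
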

\comment{Checked again 100521.}
Recall the definition of $H$ in~\eqref{eqn:heaviside}.
\begin{proof}
This is shown by induction and $k'=k+1$ is the basic case. 
Let's say $k\in \Space$. 
(Recall from the first paragraph of Section~\ref{sec:kernel}
what that means.) 
Then $s_{n_0+1-n_k} = t_k$ and $n_k = n_{k'}+1$
so the above expression reduces to $[W_{[k, k+1)}]_{x,z} = H(x-z)$ which 
is correct. 
Otherwise $k\in \Time$. Then all the $ S $ and $r$ factors 
in~(\ref{eqn:forward-transition}) equals one. 
The integral evaluates to $[W_{[k, k+1)}]_{x,z} =p_{t_{k+1} - t_k}(x,z)$ 
which is correct.

Of course
\begin{equation}
[W_{[k, k'+1)}]_{x,z} = 
\int_\Real  [W_{[k,k')} ]_{x,y} [W_{k'}] _{y,z} \, dy.
\end{equation}
If $k'\in\Space$ then apply first~\eqref{eqn:time-step-back}
which pops out an $r_{t_{k'}-t_k}= r_{t_{k'} - t_0} / r_{t_{k} - t_0} $ and then~\eqref{eqn:space-step-back}.
But $k'\in\Space$ implies that $t_{k'} = s_{n_0+1-n_{k'}} = s_{n_0+1-(n_{k'+1} +1)}$.
On the other hand if $k'\in\Time$, 
just a single application of~\eqref{eqn:time-semigroup} 
completes the induction. 
\end{proof}

\begin{lemma}
\label{thm:forward-transition-expanded}
\begin{multline}
\label{eqn:forward-transition-expanded}
[W_{[k, k')}]_{x,z} =
r^{n_{k'} - n_k} _{t_{k'} - t_0} \frac{S(k)}{S(k')} 
\sum_{l=n_{k'}+1}^{n_{k}} 
\frac{h_{n_k-l}^{(t_k)}(x)(\sigma(t_{k}))^{n_{k}-l -1} r_{t_{k'}-t_k}^{n_{k}-l }}{
 \sqrt{2(n_{k}-l )!}\pi^{\frac14}}
\int_\Real  w^{(t_{k'})}(y) H^{l-n_{k'}} (y-z) \, dy\\
+
r^{n_{k'} - n_k} _{t_{k'} - t_0} \frac{S(k)}{S(k')} 
\frac{(\sigma(t_{k'} ))^{n_k-n_{k'}} }{\sqrt2 \sigma(t_k) }
\sum_{l=-\infty}^{n_{k'}} \sqrt\frac{(n_{k'}-l)!}{(n_{k}-l)!}
 q_{t_{k'} -t_k}^{(t_k), n_{k}-l} 
 h_{n_{k}-l}^{(t_k)}(x)  h_{n_{k'}-l}^{(t_{k'})} (z) w^{(t_{k'})} (z) .
\end{multline}
for $k'>k$ and $[W_{[k, k')}]_{x,z}=0 $ otherwise.
\end{lemma}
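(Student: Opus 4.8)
The plan is to derive Lemma~\ref{thm:forward-transition-expanded} directly from Lemma~\ref{thm:forward-transition} by substituting the explicit expansion of the integral
\[
\int_\Real H^{n_k-n_{k'}}(x-y)\, p_{t_{k'}-t_k}(y,z)\,dy
\]
provided by Lemma~\ref{thm:step-expansion}. Concretely, I would apply Lemma~\ref{thm:step-expansion} with $n=n_k-n_{k'}$, $s=t_k$ and $t=t_{k'}$, so that the convolution appearing in~\eqref{eqn:forward-transition} is rewritten as a finite sum over $k=0,\dots,n_k-n_{k'}-1$ of $h_k^{(t_k)}(x)$ times a $w^{(t_{k'})}$-weighted antiderivative of $H$, plus an infinite sum over $k\ge n_k-n_{k'}$ of products $h_k^{(t_k)}(x)\,h_{k-(n_k-n_{k'})}^{(t_{k'})}(z)\,w^{(t_{k'})}(z)$. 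Then I would reindex both sums by setting $l=n_k-k$: in the finite sum $k$ ranging over $0,\dots,n_k-n_{k'}-1$ becomes $l$ ranging over $n_{k'}+1,\dots,n_k$, matching the first line of~\eqref{eqn:forward-transition-expanded}; in the infinite sum $k$ ranging over $n_k-n_{k'},n_k-n_{k'}+1,\dots$ becomes $l$ ranging over $n_{k'},n_{k'}-1,\dots$, i.e. $l$ from $-\infty$ to $n_{k'}$, matching the second line.

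The second step is bookkeeping of the prefactors. The factor $r_{t_{k'}-t_k}^{-n}$ from Lemma~\ref{thm:step-expansion} (with $n=n_k-n_{k'}$) combines with the prefactor $r_{t_k-t_0}^{\,n_{k'}-n_k}\,S(k)/S(k')$ from Lemma~\ref{thm:forward-transition}; using the composition rule~\eqref{eqn:r-composition}, $r_{t_k-t_0}r_{t_{k'}-t_k}=r_{t_{k'}-t_0}$, one checks that $r_{t_k-t_0}^{\,n_{k'}-n_k}r_{t_{k'}-t_k}^{-(n_k-n_{k'})}=r_{t_{k'}-t_0}^{\,n_{k'}-n_k}$, which is exactly the prefactor $r_{t_{k'}-t_0}^{\,n_{k'}-n_k}$ displayed in~\eqref{eqn:forward-transition-expanded}. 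The remaining $\sigma$-, $q$-, factorial-, and $\pi^{1/4}$-factors in each of the two summands of Lemma~\ref{thm:step-expansion} are then transcribed verbatim after the reindexing $l=n_k-k$ (so $k\mapsto n_k-l$ and $k-n\mapsto n_{k'}-l$), and $q^{(s),k}_{t-s}$ becomes $q^{(t_k),\,n_k-l}_{t_{k'}-t_k}$. I would also record explicitly the degenerate case $k\in\Space$ (where $n_k-n_{k'}=1$ if $k'=k+1$) to confirm the first sum has exactly one term and the formula is consistent, and the trivial case $k'\le k$ giving $0$.

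I do not expect any genuine obstacle here: this is a substitution-and-reindex lemma, and both ingredients (Lemmas~\ref{thm:forward-transition} and~\ref{thm:step-expansion}) are already in hand. The only place demanding care is the exponent arithmetic on the $\sigma$ and $r$ factors — in particular checking that the power of $\sigma(t_k)$ in the first sum comes out as $n_k-l-1$ (the extra $-1$ coming from the single leftover $h_0^{(t_{k'})}$ being absorbed into $\pi^{-1/4}$, combined with one of the $\sigma(t_k)$'s being converted via~\eqref{eqn:qr-composition}), and that the power of $r_{t_{k'}-t_k}$ in the first sum is $n_k-l$ rather than $n_k-l-1$. Writing $n:=n_k-n_{k'}$ and tracking each of Lemma~\ref{thm:step-expansion}'s factors through the map $k\mapsto n_k-l$ makes this mechanical; once the prefactor identities above are invoked, the two displays coincide term by term.
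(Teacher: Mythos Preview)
Your proposal is correct and matches the paper's proof exactly: the paper simply says to apply Lemma~\ref{thm:step-expansion} and the composition identities \eqref{eqn:q-composition}--\eqref{eqn:qr-composition} to the formula in Lemma~\ref{thm:forward-transition}, which is precisely your plan. One small clarification: your explanation of the exponent $n_k-l-1$ on $\sigma(t_k)$ is slightly tangled --- the $\pi^{-1/4}$ is already present in Lemma~\ref{thm:step-expansion} and plays no role here; the ``$-1$'' arises cleanly from applying \eqref{eqn:qr-composition} in the form $\sigma(t_{k'})\,q^{(t_k)}_{t_{k'}-t_k}=\sigma(t_k)\,r_{t_{k'}-t_k}$ to each of the $n_k-l$ factors, converting $(\sigma(t_{k'}))^{n_k-l}(q^{(t_k)}_{t_{k'}-t_k})^{n_k-l}/\sigma(t_k)$ into $(\sigma(t_k))^{n_k-l-1}(r_{t_{k'}-t_k})^{n_k-l}$.
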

\begin{proof}
Apply 
Lemma \ref{thm:step-expansion} 
and~\eqref{eqn:q-composition}--\eqref{eqn:qr-composition} to 
the formula in Lemma~\ref{thm:forward-transition}.
\end{proof}

\subsection{Computing  $BD\inv$.}
Recall from~(\ref{eqn:phi-definition}) the definition of 
$\Phi_k$, which is what we shall now compute. 
Multiply~\eqref{eqn:phi-zero-definition}
with~\eqref{eqn:forward-transition-expanded} 
and use the orthogonality~\eqref{eqn:orthogonality} to get

\begin{lemma}
\label{thm:bd-inv}
\begin{equation}
\label{eqn:phi-sum-one}
[\Phi_{k'}]_{l,z} =
r^{n_{k'} - n_0} _{t_{k'} - t_0} \frac{S(0)}{S(k')} 
(\sigma(t_{k'} ))^{n_0-n_{k'}}
\sqrt\frac{(n_{k'}-l)!}{(n_{0}-l)!}
 q_{t_{k'} -t_0}^{(t_0), n_{0}-l} 
 h_{n_{k'}-l}^{(t_{k'})} (z) w^{(t_{k'})} (z) 
\end{equation}
for $l\leq n_{k'}$ (equivalently $k'\geq \tau_{n+1-l}$) 
and 
\begin{equation}
\label{eqn:phi-sum-two}
[\Phi_{k'}]_{l,z} =
r^{n_{k'} - n_0} _{t_{k'} - t_0} \frac{S(0)}{S(k')} 
\frac{(\sigma(t_{0}))^{n_{0}-l } r_{t_{k'}-t_0}^{n_{0}-l }}{
 \sqrt{(n_{0}-l )!}\pi^{\frac14}}
\int_\Real  w^{(t_{k'})}(y) H^{l-n_{k'}} (y-z) \, dy
\end{equation}
for~$l\geq n_{k'}$.
\end{lemma}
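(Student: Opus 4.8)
The plan is to evaluate $\Phi_{k'}=\Phi W_{[0,k')}$ entrywise directly from its definition~\eqref{eqn:phi-definition}. In the continuous notation this amounts to computing
\[
[\Phi_{k'}]_{l,z}=\int_\Real [\Phi]_{l,x}\,[W_{[0,k')}]_{x,z}\,dx,
\]
into which I substitute $[\Phi]_{l,x}=h^{(t_0)}_{n_0-l}(x)\,w^{(t_0)}(x)$ from~\eqref{eqn:phi-zero-definition} together with the two-term expansion of $[W_{[0,k')}]_{x,z}$ supplied by Lemma~\ref{thm:forward-transition-expanded}, applied with $k=0$. The structural point that makes this work cleanly is that the $x$-dependence of \emph{every} term in that expansion is a single Hermite polynomial $h^{(t_0)}_{n_0-l'}(x)$ multiplied by factors depending on $z$ only; in the first sum of~\eqref{eqn:forward-transition-expanded} the index $l'$ runs over $n_{k'}+1,\dots,n_0$, and in the second sum over $l'\le n_{k'}$.

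Multiplying by $h^{(t_0)}_{n_0-l}(x)\,w^{(t_0)}(x)$ and integrating in $x$, the orthogonality relation~\eqref{eqn:orthogonality}, namely $\int_\Real h^{(t_0)}_a(x) h^{(t_0)}_b(x) w^{(t_0)}(x)\,dx=\sqrt2\,\sigma(t_0)\,\delta_{ab}$, annihilates all terms except the one with $l'=l$ and leaves behind a factor $\sqrt2\,\sigma(t_0)$. It then suffices to record which of the two sums contains that surviving term. For $l\le n_{k'}$ it is the second sum; there the factor $\sqrt2\,\sigma(t_0)$ from orthogonality cancels the $1/(\sqrt2\,\sigma(t_0))$ already present, and the remaining constants are exactly those displayed in~\eqref{eqn:phi-sum-one}. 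For $n_{k'}+1\le l\le n_0$ it is the first sum; now $\sqrt2\,\sigma(t_0)$ cancels the $\sqrt2$ under the square root and combines with $(\sigma(t_0))^{n_0-l-1}$ to give $(\sigma(t_0))^{n_0-l}$, producing~\eqref{eqn:phi-sum-two}. No further appeal to the composition identities~\eqref{eqn:q-composition}--\eqref{eqn:qr-composition} is needed at this stage, since those were already absorbed into Lemma~\ref{thm:forward-transition-expanded}.

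One point deserves a separate check: the statement asserts~\eqref{eqn:phi-sum-one} for $l\le n_{k'}$ and~\eqref{eqn:phi-sum-two} for $l\ge n_{k'}$, so the two expressions must agree at the common value $l=n_{k'}$, whereas the computation above produces~\eqref{eqn:phi-sum-one} there. Using $h^{(t_{k'})}_0\equiv\pi^{-1/4}$ and $H^0=\delta$, equality of the two formulas at $l=n_{k'}$ reduces to $\sigma(t_{k'})\,q^{(t_0)}_{t_{k'}-t_0}=\sigma(t_0)\,r_{t_{k'}-t_0}$, which is precisely~\eqref{eqn:qr-composition} with $s=t_0$ and $t=t_{k'}$. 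The only genuinely delicate part of the whole argument is the constant bookkeeping — keeping the powers of $\sigma$, $r$, $q$, the factorial ratios and the $S$-factors straight through the substitution — so that is where I would proceed most carefully; conceptually the proof is nothing more than ``substitute~\eqref{eqn:forward-transition-expanded} and invoke orthogonality''.
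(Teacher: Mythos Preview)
Your proof is correct and follows exactly the approach the paper takes: the paper's own argument is simply ``multiply~\eqref{eqn:phi-zero-definition} with~\eqref{eqn:forward-transition-expanded} and use the orthogonality~\eqref{eqn:orthogonality}'', which is precisely what you carry out in detail. Your additional consistency check at $l=n_{k'}$ via~\eqref{eqn:qr-composition} is a pleasant extra that the paper does not spell out.
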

\subsection{Computing $D\inv C$.}
We do the same at the other end. 
Recall the definition of $\Psi_k$ in~\eqref{eqn:psi-bar-definition}.
Explicit computations, similar to what we did for 
$\Phi_k$ in the preceding section, leads us to 
conjecture a general expression, which is 
proved by induction.
\begin{lemma}
\label{thm:psi-bar}
\begin{equation}
\label{eqn:psi-bar}
[\bar \Psi_k]_{x,l}=
\left(\prod_{j=l}^{n_k} r_{s_{n_{0}+1-j} - t_k} \right)\times
\int_\Real H^{n_k+1-l}(x-y) 
p_{s_{n_0+1-l}-t_k}(y,u)
\,dy
\end{equation}
for~$n_k\geq l$ (equivalently~$k\leq \tau_{n_0+1-l}$) 
and~$ [\bar\Psi_k]_{x,l} \equiv 0 $ otherwise.
\end{lemma}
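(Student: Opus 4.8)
\textbf{Proof plan for Lemma \ref{thm:psi-bar}.}

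The plan is to prove the formula by downward induction on $k$, using the recursion \eqref{eqn:psi-bar-definition}, namely $\bar\Psi_k = F_k + \sum_{j=k+1}^N W_{[k,j)}F_j$, which is equivalent to the one-step recursion $\bar\Psi_k = F_k + W_k \bar\Psi_{k+1}$ (here $W_k$ is applied on the left, i.e.\ as a convolution in the first variable, since $W_{[k,j)} = W_k W_{[k+1,j)}$). The base case is $k=N$: by \eqref{eqn:f-definition} and the convention $N-1\in\Space$, the only nonzero block $\bar\Psi_N = F_N$ has zero rows, so there is nothing to check, and the first genuinely nonzero case $k=N-1$ gives $[\bar\Psi_{N-1}]_{x,l}$ supported on $l = n_{N-1} = 1$, equal to $H(x-u)$, which matches the right-hand side of \eqref{eqn:psi-bar} since then $n_k+1-l = 1$, $s_{n_0+1-l} = s_{n_0} = t_{N-1} = t_k$, the product of $r$'s is empty (or each factor is $r_0 = 1$), and $\int H^1(x-y)p_0(y,u)\,dy = H(x-u)$ using that $p_0$ is the identity kernel.

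For the inductive step I would assume \eqref{eqn:psi-bar} holds at level $k+1$ and compute $[\bar\Psi_k]_{x,l} = [F_k]_{x,l} + \int_\Real [W_k]_{x,w}[\bar\Psi_{k+1}]_{w,l}\,dw$, splitting according to whether $k\in\Time$ or $k\in\Space$. If $k\in\Time$, then $n_k = n_{k+1}$, $[F_k]_{x,l} = 0$, and $[W_k]_{x,w} = p_{t_{k+1}-t_k}(x,w)$; applying \eqref{eqn:time-step-back} to pull the $p$ past the $H^{n_k+1-l}$ factor produces a factor $r_{t_{k+1}-t_k}$, then \eqref{eqn:time-semigroup} composes $p_{t_{k+1}-t_k}$ with $p_{s_{n_0+1-l}-t_{k+1}}$ into $p_{s_{n_0+1-l}-t_k}$, and \eqref{eqn:r-composition} reassembles the telescoping product of $r$-factors into the one at level $k$. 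If $k\in\Space$, then $n_k = n_{k+1}+1$, $[W_k]_{x,w} = H(x-w)$, and $[F_k]_{x,l}$ is $H(x-u)$ exactly when $l = n_k+1-1 = n_k$ and zero otherwise; for $l \le n_{k+1} = n_k - 1$ I would use \eqref{eqn:space-step-back} to convolve $H$ with $H^{n_{k+1}+1-l}$ and obtain $H^{n_k+1-l}$, noting that the range of the product over $j$ extends correctly since $s_{n_0+1-j}$ for $j$ in the new range is unaffected, while for $l = n_k$ the convolution term vanishes (the upper limit of the $j$-product is below the lower limit) and only the $F_k$ term survives, giving $H(x-u) = \int H^1(x-y)p_{s_{n_0+1-n_k}-t_k}(y,u)\,dy$ since $k\in\Space$ forces $t_k = s_{n_0+1-n_k}$ and hence $p_0$ is the identity.

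I expect the main obstacle to be purely bookkeeping rather than conceptual: keeping the index shifts between $n_k$ and $n_{k+1}$ consistent across the two cases, and verifying that the telescoping product $\prod_{j=l}^{n_k} r_{s_{n_0+1-j}-t_k}$ transforms correctly — in the $\Time$ case one must check that $r_{t_{k+1}-t_k}\cdot r_{s_{n_0+1-j}-t_{k+1}} = r_{s_{n_0+1-j}-t_k}$ for every $j$ in the range via \eqref{eqn:r-composition}, and in the $\Space$ case one must check that dropping the top index $j = n_k$ from the product (since $n_{k+1} = n_k - 1$) is exactly compensated by the fact that the surviving $F_k$ contribution at $l = n_k$ carries the empty product. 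A secondary subtlety is making sure the support condition ``$n_k \ge l$ (equivalently $k \le \tau_{n_0+1-l}$)'' propagates correctly: at a down-step $k\in\Space$ the newly created row index $l = n_k$ satisfies $\tau_{n_0+1-n_k} = k$, which is precisely the boundary case of the equivalence, so the indicator in $F_k$ is consistent with the claimed support. Once these identifications are made, each step is a direct application of one of the convolution identities \eqref{eqn:space-step-back}, \eqref{eqn:time-semigroup}, \eqref{eqn:time-step-back}, \eqref{eqn:r-composition} from the Lemma in Section~\ref{sec:definitions}.
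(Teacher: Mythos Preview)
Your plan is correct and follows essentially the same backward induction as the paper. The only organizational difference is that the paper fixes a column $l$ and inducts from the base case $k=\bar k:=\tau_{n_0+1-l}$ down to $k=0$, whereas you induct on $k$ for all columns simultaneously and handle the newly appearing column $l=n_k$ at each space step via the $F_k$ term; the two arguments are equivalent. One minor point of precision: in the Time case, identity~\eqref{eqn:time-step-back} actually produces the power $r_{t_{k+1}-t_k}^{\,n_k+1-l}$ rather than a single factor, which is exactly what you need to update every term in the product $\prod_{j=l}^{n_k}$ via~\eqref{eqn:r-composition} --- your later remark ``for every $j$ in the range'' shows you have this right, and you are in fact more explicit than the paper in also invoking the semigroup property~\eqref{eqn:time-semigroup} to merge the two $p$-kernels.
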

\begin{proof}
To get the hang of it, consider column one of $\bar\Psi_{k} $ for all 
$k=0$, 1, \dots. 
Recall~\eqref{eqn:f-definition}. 
Of course $F_N$ is quite tame, it is identically zero
giving us $\bar\Psi_N=0$.
By definition $\tau_{n_0}=N-1$, so step number 
$N-1\in\Space$ and $\bar\Psi_{N-1} = F_{N-1}$ is non-zero.
Remember that $F_{N-1}$ is non-zero only in 
column one and it is the only one of $F_0$, \dots, $F_{N-1}$ that 
is non-zero in that column. 
So
$$
[\bar\Psi_k]_{x,1} = [W_{[k, N-1)} F_{N-1}]_{x,1}.
$$
Repeated applications of~\eqref{eqn:time-step-back}
and~\eqref{eqn:space-step-back} specialises this
to~\eqref{eqn:psi-bar} if you keep track of the constants. 

More generally, for $k=0$, \dots, $N$,  
recall from~\eqref{eqn:f-definition} that column $l$ of $F_k$ is 
only non-zero if $k = \tau_{n_0+1-l}$. 
Remember too from~\eqref{eqn:psi-bar-definition} that $F_k$
only occurs in $\bar\Psi_0$, \dots, $\bar\Psi_k$.  
That accounts for the fact that $[\bar\Psi_k]_{x,l}\equiv 0 $ 
for $k> \tau_{n_0+1-l}$  (equivalently $n_k<l$). 

The induction on $k$ is now done backwards. 
Fix $l\in \{ 1, \dots, n_0\}$. 
We shall show that the formula~\eqref{eqn:psi-bar} 
is true for this particular value of $l$. 
Specialising to  $\bar k = \tau_{n_0+1-l}$,
equivalently $t_{\bar k} = s_{n_0+1-l}$, 
gives us $[\bar\Psi_{\bar k} ]_{x,l} = r_0 H^1(x-u)=F_{\bar k}$. 
Assume now that the theorem gives the correct expression
for $\bar\Psi_{k+1}$.
 Since $k \neq \bar k+1$ it is clear that $F_{k+1} = 0$
in column $l$. Also, $W_k W_{[k+1, \bar k} =  W_{[k, \bar k} $
So 
\begin{equation}
  [\bar\Psi_{k}]_{x,l} = \int_\Real  [W_k]_{x,y} [\bar\Psi_{k+1} ]_{y, l} \, dy
\end{equation}
If $k\in\Time$ then applying first~\eqref{eqn:time-step-back} 
and then~\eqref{eqn:r-composition} completes 
the proof.  If $k\in\Space$ then~\eqref{eqn:space-step-back}
does the same. 
The calculation is in reality only
a question of keeping track of the coefficients. 
\end{proof}
We now exercise the right to do column operations.
Recall from~\eqref{eqn:psi-definition} the
definition of $\Psi_k$. 
$R_k$ is chosen judiciously to give a very simple
expression for $MR_k$. 

\begin{lemma}
\label{thm:psi}
There exists an $R_k$ such that for  $x\in \M$ and 
$u$ a large negative real number, 
\begin{multline}
[\Psi_k]_{x,l}=
h_{n_k-l}^{(t_k)}(x)
r^{n_0-n_k} _{t_k-t_0} 
q^{(t_0), l - n_0}_{t_k-t_0} 
\sqrt\frac{(n_0-l)!}{(n_k-l)!} 
\frac{S(k)}{S(0)} \frac{(\sigma (t_k))^{n_k-n_0} }{ \sqrt2\sigma(t_k)}
+O(e^{-u^2/2})
\end{multline}
 for $l = 1$, \dots, $n_k$, and 
$[\Psi_k]_{x,l}= 0$
 for $l = n_k+1$, \dots, $n_0$.
\end{lemma}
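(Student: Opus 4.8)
The plan is to read $\Psi_k$ off from the closed formula for $\bar\Psi_k$ in Lemma~\ref{thm:psi-bar}, by expanding the convolution there into normalised Hermite polynomials and then choosing the column operation $R_k$ of~\eqref{eqn:psi-definition} so as to kill every term but the leading one. Fix $k$ and $l$ with $1\le l\le n_k$, set $n:=n_k+1-l$ and $\theta:=s_{n_0+1-l}$, so that by Lemma~\ref{thm:psi-bar}
\[
[\bar\Psi_k]_{x,l}=\Bigl(\prod_{j=l}^{n_k}r_{s_{n_0+1-j}-t_k}\Bigr)\int_\Real H^{n}(x-y)\,p_{\theta-t_k}(y,u)\,dy .
\]
First I would apply Lemma~\ref{thm:step-expansion} to this integral, with the roles of its parameters taken to be $n\mapsto n$, $s\mapsto t_k$, $t\mapsto\theta$ and $z\mapsto u$. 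This writes the integral as a finite sum $\sum_{m=0}^{n-1}c_m(u)\,h_m^{(t_k)}(x)$, where $c_m(u)$ is an explicit constant times $\int_\Real w^{(\theta)}(y)H^{\,n-m}(y-u)\,dy$, plus the Hermite tail $\frac{(\sigma(\theta))^{n}}{\sqrt2\,\sigma(t_k)\,r_{\theta-t_k}^{n}}\sum_{m\ge n}\sqrt{\tfrac{(m-n)!}{m!}}\,q^{(t_k),m}_{\theta-t_k}\,h_m^{(t_k)}(x)\,h_{m-n}^{(\theta)}(u)\,w^{(\theta)}(u)$.

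Next I would read off the behaviour in $u$. By~\eqref{eqn:heaviside}, $H^p(y-u)=\1\{y\ge u\}(y-u)^{p-1}/(p-1)!$ for $p\ge1$, so $\int_\Real w^{(\theta)}(y)H^{\,n-m}(y-u)\,dy$ is, as $u\to-\infty$, a polynomial in $u$ of degree at most $n-m-1$; for the top index $m=n-1=n_k-l$ this degree is $0$, and indeed $\int_\Real w^{(\theta)}(y)H^{1}(y-u)\,dy=\int_u^\infty e^{-y^2/\theta}\,dy=\sqrt{\pi\theta}+O(e^{-u^2/\theta})$, so that $c_{n_k-l}(u)$ has a nonzero $u$-independent limit. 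Every term of the tail carries the factor $w^{(\theta)}(u)=e^{-u^2/\theta}$, and since $q^{(t_k)}_{\theta-t_k}<1$ the series converges uniformly for $x$ in a compact set; hence the whole tail is exponentially small in $u^2$ (a crude bound on the finitely many weights occurring gives the exponent stated). Thus $[\bar\Psi_k]_{x,l}=c_{k,l}\,h_{n_k-l}^{(t_k)}(x)+\sum_{m=0}^{n_k-l-1}P^{(l)}_m(u)\,h^{(t_k)}_m(x)+O(e^{-u^2/2})$ with $c_{k,l}\ne0$ independent of $u$ and each $P^{(l)}_m$ a polynomial in $u$; also $[\bar\Psi_k]_{x,l}\equiv0$ for $l>n_k$ already by Lemma~\ref{thm:psi-bar}.

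The point of the column operation is now visible. Since $\{h^{(t_k)}_0,\dots,h^{(t_k)}_{n_k-l}\}$ is a basis of the polynomials of degree $\le n_k-l$ and, for $l'>l$, column $l'$ of $\bar\Psi_k$ has $x$-leading term $c_{k,l'}h^{(t_k)}_{n_k-l'}(x)$ with $c_{k,l'}\ne0$, the unwanted part $\sum_{m<n_k-l}P^{(l)}_m(u)h^{(t_k)}_m(x)$ of column $l$ can be removed by subtracting a suitable $u$-dependent linear combination of columns $l+1,\dots,n_k$; performing this for $l=n_k,n_k-1,\dots,1$ in turn (so that each column used has already been reduced) defines a triangular $R_k$ with ones on the diagonal, acting as the identity on the indices $>n_k$, hence invertible — a legitimate column operation. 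Because the multipliers are polynomials in $u$ they keep the error terms exponentially small, so $[\Psi_k]_{x,l}=c_{k,l}h^{(t_k)}_{n_k-l}(x)+O(e^{-u^2/2})$ for $1\le l\le n_k$, while $[\Psi_k]_{x,l}=0$ for $l>n_k$ since every column entering is zero. It then remains only to check that $c_{k,l}$, which is the product of the prefactor $\prod_{j=l}^{n_k}r_{s_{n_0+1-j}-t_k}$ with the limiting value of the top coefficient from Lemma~\ref{thm:step-expansion}, equals the announced expression: one substitutes that explicit value and collapses the product of $r$'s using~\eqref{eqn:r-composition} together with the composition identities~\eqref{eqn:q-composition}--\eqref{eqn:qr-composition} and the definition of $S$, after which the $r$, $q$, $\sigma$, factorial and $\pi$ factors assemble into $r^{n_0-n_k}_{t_k-t_0}q^{(t_0),l-n_0}_{t_k-t_0}\sqrt{(n_0-l)!/(n_k-l)!}\,(S(k)/S(0))\,(\sigma(t_k))^{n_k-n_0}/(\sqrt2\,\sigma(t_k))$.

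There is essentially no conceptual difficulty here — the lemma is just Lemma~\ref{thm:step-expansion} followed by a triangular back-substitution — so I expect the main obstacle to be the constant-chasing of the last step, the same kind of bookkeeping already met in Lemma~\ref{thm:bd-inv}; a secondary technical nuisance is making the ``$O(e^{-u^2/2})$'' claim precise, namely convergence of the Hermite tail locally uniformly in $x$ and stability of the error under the polynomial-in-$u$ multipliers introduced by the column operation.
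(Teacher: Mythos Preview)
Your approach is sound and in fact a little more direct than the paper's: you feed $\bar\Psi_k$ straight into Lemma~\ref{thm:step-expansion} and obtain a Hermite expansion from the outset, whereas the paper first argues by a bare-hands binomial expansion of $(x-y-r_{\rho_l}^{-1}u)^{l+1}$ that suitable $u$-dependent combinations of columns produce polynomials in $x$ of the correct degrees, and only afterwards performs a second round of column operations $R'$ to pass to Hermite polynomials. The triangular back-substitution step is essentially the same in both arguments, and your remark that polynomial-in-$u$ multipliers preserve the exponential error is exactly the mechanism the paper uses implicitly.

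There is, however, a genuine slip in your last step. You insist that $R_k$ be unit upper triangular and then claim that the resulting leading coefficient $c_{k,l}$ ``assembles into'' the prefactor in the statement. It does not. Already for $k=0$, $l=n_0$ your recipe gives (after the limit $u\to-\infty$) the top coefficient
\[
c_{0,n_0}=r_{s_1-t_0}\cdot\frac{(\sigma(s_1))^{0}\,q^{(t_0),0}_{s_1-t_0}}{r_{s_1-t_0}\,\sigma(t_0)\sqrt{2\cdot 0!}\,\pi^{1/4}}\cdot\sqrt{2\pi}\,\sigma(s_1)=\frac{\sigma(s_1)\,\pi^{1/4}}{\sigma(t_0)},
\]
whereas the target prefactor is $1/(\sqrt2\,\sigma(t_0))$; these disagree in either the starred or unstarred setting. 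The constants in the lemma are a \emph{choice}, engineered so that $MR_k$ comes out as the identity in the next subsection; they are not intrinsic to $\bar\Psi_k$. The paper handles this by explicitly allowing a further diagonal rescaling (``multiply the columns with suitable constants\dots\ Let $R'$ be the matrix that encodes the column operations and multiplications that turn the columns of $\bar\Psi_kR$ into Hermite polynomials with factors as in the statement''). Your argument is easily repaired in the same way: drop the unit-diagonal requirement on $R_k$ and include a diagonal factor $c_{k,l}^{-1}\times(\text{target prefactor})$, which is legitimate since $c_{k,l}\ne0$. What you should \emph{not} expect is that the composition identities \eqref{eqn:q-composition}--\eqref{eqn:qr-composition} alone will turn $c_{k,l}$ into the stated constant.
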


\begin{proof}
This bit is rather hairy actually.
Fix $k$ and let $\rho_l = s_{n_0+1-n_k+l} - t_k$.
Let 
\begin{align*}
f_l(x,u) &= C(l)  [\bar \Psi_k ]_{x, n_k-l} 
\intertext{
for $l\in \{0, \dots, n_k-1\}$ where $C(l)$ is the constant that gives }
f_l(x,u) &= \int_\Real H^{l+1} (x-y)  p_{\rho_l} (y, u) \, dy\\
&= \int_\Real H^{l+1} (x-y)p_{\rho_l} (y - r_{\rho_l}\inv  u, 0) \, dy\\
&= \int_\Real H^{l+1} (x-y- r_{\rho_l}\inv u)p_{\rho_l} (y , 0) \, dy.
\end{align*}
The second equality is due to~\eqref{eqn:space-shift} and
the third one is a change of variables $y\mapsto y+ r_{\rho_l}\inv u$.
Now what we need to do is to produce a sequence
of functions $g_0$, \dots, $g_{n_k-1}$ such that
for each $l$,
\begin{enumerate}
\item 
$g_l$ is a linear combination of $f_0$, \dots, $f_l$,
\item
$g_l(x,u)$ differs from  polynomial in $x$ of order $l$ 
by at most a term exponentially decreasing in $u$. 
That is, there is a polynomial $\bar h_l(x) $  of 
degree $l$ such that 
$$
g_l(x,u) = \bar h_l(x) + O(e^{-u^2/2} ).
$$ 
\end{enumerate}
Just to  see how it works  consider 
\begin{align}
  f_0(x,u) &= \int_\Real H(x-y-r_{\rho_0}\inv u) p_{\rho_0} (y, 0) \, dy \\
   &= \int   p_{\rho_0} (y, 0) \, dy + O( e^{-u^2/2} )
\end{align}
as $u\rightarrow -\infty$. So let $g_0 = f_0$
implying that $h_0 $ is a constant.

Now for some constant 
$$C = \left (\int_\Real  p_{\rho_1}(y,0)\,dy\right)\left( \int_\Real  p_{\rho_0}(y,0) \, dy\right)\inv$$ look at 
\begin{align*}
f_1(x,u) -  C r_{\rho_1}\inv u f_0(x,u)  =&
 \int (x-y-r_{\rho_1}\inv u )  H(x-y-r_{\rho_1}\inv u ) p_{\rho_1}(y,0)\\
&- C  r_{\rho_1}\inv u \int  H(x-y-r_{\rho_1}\inv u) p_{\rho_0}(y,0)\\
\intertext{(where $C$ is chosen so that, in the
limit of large negative $u$,  the integrands partially cancel out.)}
=& \int (x-y )  p_{\rho_1}(y,0)\,dy + O( e^{-u^2/2})\\
=& x \int p_{\rho_1}(y,0)\,dy 
-  \int y p_{\rho_1}(y,0)\,dy + O(e^{-u^2/2}).
\end{align*}
Thus set $g_1(x,u) = f_1 -  C r_{\rho_1}\inv u f_0 $ 
and $\bar h_1 $ accordingly. 

Now that we see it works for $l = 0$ and 1, let's do the induction.
Say the statement is true for $l$, then 
\begin{align*}
f_{l+1} ( x,u )  & = \int (x-y-r_{\rho_{l+1}}\inv u )^{l+1}   
H(x-y-r_{\rho_{l+1}} \inv u ) p_{\rho_{l+1}}(y,0)\, dy\\
&= \sum_{i = 0} ^{l+1} \binom{l+1}{i}\int  (x-y)^{l+1-i} (r_{\rho_{l+1}}\inv u )^i 
H(x-y-r_{\rho_{l+1}}\inv ) p_{\rho_{l+1}}(y,0)\,dy
\end{align*}
The first term is already  a polynomial in $x$ of degree $l+1$
 modulo an O-term. The other terms are polynomials in $x$
of lower degree which by the induction hypothesis 
 can be expressed in $g_0$, \dots, $g_l$.

What we have effectively done is that we have
taken various linear combinations, encoded in $R$, 
of columns of $\bar\Psi_k$  producing
$$
[\bar \Psi_k R ] _{x, l} = \text{polynomial of degree $l$} + \text{perturbation exponentially small  in $u$.}
$$
Now we will go ahead and do more column operations 
and  multiply the columns with suitable constants.
Let $R'$ be the matrix that encodes the column operations
and multiplications that turn the columns of 
$\bar \Psi_k R$ into Hermite polynomials with factors
as in the statement of the Lemma. 
Set $R_k = R R'$ and $\Psi_k  = \bar \Psi_k R_k$ and we are done. 
\end{proof}

\subsection{Computing $(MR_k)\inv$}
First observe that $M$ is essentially lower triangular. 
To see this, for $1\leq i<j\leq n_0$ consider
\begin{equation}
\label{eqn:m-lower-triangular}
[M]_{ij} = \int_\Real   [\Phi_0]_{i,x} [\bar \Psi_0]_{x,j} \, dx= 
\cdots = O(e^{-u^2 / 2})
\end{equation}
This is a computation. You insert the expressions for $\Phi_0$ and
$\Psi_0$, apply first~\eqref{eqn:space-step-back}, then 
then~\eqref{eqn:time-step} and lastly perform $n_0-j$ applications 
of~\eqref{eqn:space-step}. 
By~\eqref{eqn:orthogonality} the conclusion in~\eqref{eqn:m-lower-triangular}
holds.

Let $I_n$ be the identity matrix of size $n\times n$. 
In block form $M$ and $R_k$ are:
\begin{equation}
MR_k = \begin{bmatrix}
* & O(e^{-u^2 / 2})\\
* & *
\end{bmatrix}
\begin{bmatrix}
* & 0\\
0 & I_{n_0-n_k} 
\end{bmatrix}
=
\begin{bmatrix}
I_{n_k} + O(e^{-u^2 / 2}) & O(e^{-u^2 / 2})\\
* & *
\end{bmatrix}
\end{equation}
The top-right block of $MR_k$ is essentially $I_{n_k}$ by construction. 
To see that, recall~(\ref{eqn:mr}), 
insert Lemmas~\ref{thm:bd-inv} and~\ref{thm:psi}
and apply~\eqref{eqn:orthogonality}.
To invert this matrix use~\eqref{eqn:matrix-inverse}
with $M=-I_{n_k} + O(e^{-u^2 / 2})$ and $B= O(e^{-u^2 / 2})$ and 
we are only interested in the top two blocks. 
\begin{proposition}
\begin{equation}
[(MR_k)\inv]_{ij}
=\delta_{i,j} + O( e^{-u^2/2})
\end{equation}
for $i = 1$, \dots, $n_k$ and $j=1$, \dots, $n_0$. 
\end{proposition}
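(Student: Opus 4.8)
The plan is to read the inverse off from the block structure of $MR_k$ just assembled, using the block-inversion identity~\eqref{eqn:matrix-inverse}. Split $\{1,\dots,n_0\}$ into its first $n_k$ indices and its last $n_0-n_k$ indices and write $MR_k$ accordingly as
\[
MR_k = \begin{bmatrix} A & B \\ C & D \end{bmatrix},
\qquad
A = I_{n_k} + O(e^{-u^2/2}),
\qquad
B = O(e^{-u^2/2}),
\]
where the estimate on $A$ comes from the display following~\eqref{eqn:mr} --- that is, from inserting Lemmas~\ref{thm:bd-inv} and~\ref{thm:psi} and using the orthogonality relation~\eqref{eqn:orthogonality} --- and the estimate on $B$ is the ``essentially lower triangular'' bound~\eqref{eqn:m-lower-triangular} carried through $R_k$. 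About $C$ and $D$ I keep only that $MR_k$ is invertible (by the Eynard--Mehta hypothesis together with invertibility of $R_k$, with $D$ invertible as the paper tacitly uses) and that $C$, $D$, and $D\inv$ stay bounded as $u\rightarrow -\infty$.

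Next I would simply apply~\eqref{eqn:matrix-inverse}. In the notation there the Schur complement is $\mathcal{M} := B D\inv C - A$; since $D\inv C$ is bounded and $B = O(e^{-u^2/2})$ we get $\mathcal{M} = -I_{n_k} + O(e^{-u^2/2})$, hence $\mathcal{M}\inv = -I_{n_k} + O(e^{-u^2/2})$ by a Neumann-series expansion of $(I_{n_k} - O(e^{-u^2/2}))\inv$. By~\eqref{eqn:matrix-inverse} the first $n_k$ rows of $(MR_k)\inv$ are the two blocks $-\mathcal{M}\inv$ and $\mathcal{M}\inv B D\inv$: the first is $I_{n_k} + O(e^{-u^2/2})$, and the second is $\bigl(-I_{n_k} + O(e^{-u^2/2})\bigr)\cdot O(e^{-u^2/2})\cdot D\inv = O(e^{-u^2/2})$. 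Concatenating these blocks gives exactly $[(MR_k)\inv]_{ij} = \delta_{i,j} + O(e^{-u^2/2})$ for $i=1,\dots,n_k$ and $j=1,\dots,n_0$, which is the claim. (Equivalently one can avoid~\eqref{eqn:matrix-inverse} altogether and note directly that the top block row of the inverse is $\bigl[(A-BD\inv C)\inv,\ -(A-BD\inv C)\inv B D\inv\bigr]$, then estimate each factor.)

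The one point genuinely requiring care --- and what I expect to be the main obstacle --- is the claim that $C$, $D$ and $D\inv$ are bounded as $u\rightarrow -\infty$, since that is what lets the various $O(e^{-u^2/2})$ factors be absorbed rather than amplified. For $C$ and $D$ this follows by tracking the $u$-dependence through Lemmas~\ref{thm:psi-bar} and~\ref{thm:bd-inv}: the large negative $u$ enters only inside Gaussian transition densities $p_\rho(\,\cdot\,,u)$ and shifted anti-derivatives $H^m(\,\cdot\,-u)$ of the delta function, and for negative $u$ both of these merely suppress or leave bounded the relevant integrals. Invertibility of $D$ with bounded inverse is slightly more delicate, being an asymptotic statement about a matrix whose limit is not the identity; here one checks that $D$ converges entrywise, as $u\rightarrow -\infty$, to a fixed invertible matrix (obtained by discarding the exponentially small corrections), so $D\inv$ is bounded for $u$ sufficiently negative by continuity of inversion. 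The paper treats these boundedness issues as implicit; granting them, the Proposition is just the two-line estimate above, recording the portion of $(MR_k)\inv$ that is actually needed when assembling the kernel via~\eqref{eqn:psi-phi-sum}.
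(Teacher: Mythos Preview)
Your approach is essentially the same as the paper's: both apply the block-inversion identity~\eqref{eqn:matrix-inverse} to the displayed block form of $MR_k$, identifying the Schur complement as $-I_{n_k}+O(e^{-u^2/2})$ and the top-right block as $O(e^{-u^2/2})$, and then reading off the first $n_k$ rows of the inverse. You are more explicit than the paper about the boundedness of $C$, $D$, and $D\inv$ needed to absorb the $O(e^{-u^2/2})$ factors --- the paper leaves this entirely implicit and simply states the conclusion.
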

Essentially, it is  diagonal (Hurrah!!) due to our judicious 
choice of polynomials and furthermore the identity matrix due to our choice 
of factors in Lemma~\ref{thm:psi}.

\subsection{Putting it all together}
\label{sec:finale}
Recall the expression for the kernel in~\eqref{eqn:psi-phi-sum}.
For the case $k \geq k' $,
inserting the results of Lemmas~\ref{thm:bd-inv} and~\ref{thm:psi},
\eqref{eqn:q-composition}--\eqref{eqn:qr-composition},
 taking account  of the cancellations  and letting $u\rightarrow-\infty$
gives
\begin{multline}
\label{eqn:kernel-one}
K((n_k, x, t_k), (n_{k'}, x', t_{k'})) = 
 r^{n_{k'}-n_{k}} _{t_{k'}-t_0} 
\frac{S(k)}{S(k')}
\frac{(\sigma(t_{k'}) )^{n_{k} - n_{k'} }}{\sqrt2 \sigma(t_k)}
\times \\
 \sum_{l=1}^{n_{k}}
\sqrt{
 \frac{(n_{k'} -l )!}{(n_{k} - l)!}
}
q^{(t_{k}), n_k-l}_{t_{k'}-t_{k}}
h_{n_{k}-l}^{(t_{k})}(x)
h_{n_{k'}-l}^{(t_{k'})}(x')
w^{(t_{k'})}(x').
\end{multline}
In this case you don't need to use~\eqref{eqn:phi-sum-two}. 

In the case $k < k'$ we also need 
Lemma~\ref{thm:forward-transition-expanded}.
The first $n_k$ terms in the summand in~\eqref{eqn:psi-phi-sum}
cancel the corresponding terms 
in~\eqref{eqn:forward-transition-expanded}
and we are left with the following
infinite sum.
\begin{multline}
\label{eqn:kernel-two}
K((n_k, x, t_k), (n_{k'}, x', t_{k'})) = 
-r^{n_{k'}-n_{k}} _{t_{k'}-t_0} 
\frac{S(k)}{S(k')}
\frac{(\sigma(t_{k'}) )^{n_{k} - n_{k'} }}{\sqrt2 \sigma(t_k)}
\times \\
 \sum_{l=-\infty}^{0}
\sqrt{
 \frac{(n_{k'} -l )!}{(n_{k} - l)!}
}
q^{(t_{k}), n_k-l}_{t_{k'}-t_{k}}
h_{n_{k}-l}^{(t_{k})}(x)
h_{n_{k'}-l}^{(t_{k'})}(x')
w^{(t_{k'})}(x').
\end{multline}
We shall now specialise to the 
case of Dyson BM and Warren respectively. 
\begin{proof}[Proof of Theorem \ref{thm:main-theorem-1}]

Insert the various expressions in section~\ref{sec:dyson}
into~\eqref{eqn:kernel-one} and~\eqref{eqn:kernel-two}
and multiply with the conjugating factors
\begin{equation}
\frac{S(k') e^{n'(t'-t_0) }}{S(k) e^{n(t-t_0) }}
\end{equation}
which always cancel out when you take  determinants. 

To show that the integral representation 
equals this shifted sum of Hermite polynomials, 
observe that
$$
\frac{1}{e^{-(t'-t)} v - u} \frac{v^{n'}}{u^n}   = 
\frac{v^{n'} }{u^{n+1}} \sum_{l=0}^{\infty}\left( \frac{u}{e^{-(t'-t)} v} \right)^{l} 
$$
and plug in the integral representations 
in Section~\ref{sec:hermite-representations}. 
The function $\phi$ should be expanded according 
to Lemma~\ref{thm:step-expansion}.
\end{proof}

\begin{proof}[Proof of Theorem~\ref{thm:main-theorem-2}]
Insert the various expressions in section~\ref{sec:warren}
into~\eqref{eqn:kernel-one} and~\eqref{eqn:kernel-two}
and multiply with 
\begin{equation}
\frac{S(k')}{S(k)}2^{(n'-n)/2}
\end{equation}
which always cancels out when you take  determinants. 
The integral representation is done exactly as in the last proof.
\end{proof}


 
\section{Asymptotics}
\label{sec:bead}
We shall now study the scaling limit of the Dyson Brownian Minor 
kernel from~\eqref{eqn:DB-kernel} in the bulk.

\begin{proof}[Proof of Theorem~\ref{thm:main-theorem3}]
This is done by saddle point analysis.
Let $\tau_N = e^{-(t'-t)/2N }$. 
We start out with the case $(n,t) \geq (n',t') $. 
Use the representation~\eqref{eqn:DB-kernel}
and substitute $u \mapsto u \sqrt{N/2}$ and $v \mapsto \tau_N\inv v\sqrt{N/2}$
in the integral. We must now compute
\begin{multline}
\lim_{N\rightarrow\infty}  \frac{1}{2\pi i} 
\int_\gamma du \int_\Gamma dv  \frac{\tau_N^{-N-n'-1}}{ v - u}
\frac{v^{n'}}{u^n}  \times \\
\times \exp\left[{(N/2)( \tau^{-2}_Nv^2 -u^2 +4a(u-\tau\inv_Nv)
 +2\ln v - 2\ln u) + ux  - \tau\inv_N vy} \right].
\end{multline}
The part of the exponent that 
is multiplied by $(N/2)$ is $f(u)-f(v)$
for 
\begin{align}
f(u) & = -u^2 + 4au - 2\ln u.\\
\intertext{The saddle points, satisfying the equation $f'(u)=0$  are
given by~\eqref{eqn:saddle-points} and  are both on the unit circle. 
We let $\theta $ be the angle specifying those points, 
so that $u_\pm = e^{\pm i\theta}$.
The Taylor expansions around these
critical points are }
f(u_\pm + h ) & = f(u_\pm) +( \mp 4a\sqrt{1-a^2} + 4(1-a^2)i ) h^2 + o(h^2).
\end{align}
The only thing we really need is that the coefficient of $h^2$
is non-zero. 

Now deform the contour $\Gamma$ to $\Gamma_a$ which 
is a straight line from $ a-i\infty $ to $ a+i\infty $.
See Figure~\ref{fig:contours}. 
When deforming the $v$-contour through the $u$-contour 
out pops the residue at $u= v$ which is exactly
\begin{equation}
\frac{1 }{2\pi i} \int_{u_-}^{u_+} 
u^{n'-n} e^{\frac12 (t'-t)(u^2 -2a u )  + u(x-y) } \, du.
\end{equation}
It remains to show that all the other parts of the
contours evaluate to zero. 

To see what happens around the saddle point $u_+$ 
perform the change of variables $u \mapsto u_+ + u /\sqrt{N}$
and $v \mapsto u_+ + v /\sqrt{N}$. 
\begin{multline*}
\frac{1}{2\pi i} \int du \int dv 
\frac{\tau_N^{-N-n'-1}}{v-u} \frac{(u_++v/\sqrt{N})^{n'} }{ (u_++u/\sqrt{N})^{n} } \times \\
\times
\exp\left[ \frac{1}{4} f''(u_+) (u^2-v^2)  +u_+( x -\tau^{-1} y)+
O( 1-\tau_N) + O(N^{-\frac12} ) 
\right]
\end{multline*}
which, as $N\rightarrow \infty$, tends to
\begin{multline*}
\frac{1}{2\pi i} \int du \int dv 
\frac{e^{\frac12 (t'-t) } }{v-u} (u_+)^{n'-n} 
\exp\left[ \frac{1}{4} f''(u_+) (u^2-v^2)  +u_+(x -y)
\right].
\end{multline*}
Switching $u\mapsto -u$ and $v \mapsto -v$ makes
the integrand change sign. 
Thus the integral must be zero. 
The saddle point at $u_-$ contributes zero by the same argument. 

For the  remaining contours we need to show that 
$g(u,v) := \Re f(u)-f(v) <1$. 
Well, if $g(u,v)>1$ somewhere then the integral would not 
be convergent. 
But the whole thing is a probability density and  must therefore be finite,
so this is a contradiction. 
If at some point $(u,v)$ it happens that
 $g(u,v)=1$ then $f'(u)\neq 0$ for we
have already accounted for all saddle points. 
Thus by the definition of derivative there must be 
a point nearby where $g(u,v)>1$ which is a contradiction. 

Next we need to compute the same scaling limit of the  function
$\phi$ from~\eqref{eqn:DB-kernel2} which is a very straight
forward computation, yielding~\eqref{eqn:bead2}.
\end{proof} \bibliography{article}{}
\bibliographystyle{alpha}

\end{document}